\documentclass[11pt]{amsart}

\usepackage{amsmath,amssymb,amsthm,mathtools}
\usepackage{xcolor}
\usepackage{tikz}
\usepackage{latexsym,enumerate,multirow}
\usepackage{nicematrix}
\usepackage{url}
\usepackage{mathrsfs}
\usepackage{ascmac}	

\allowdisplaybreaks

\usepackage[
  paperwidth=8.5in,
  paperheight=297mm,
  textwidth=6.151in,
  textheight=8.25in,
  left=1in,
  top=34mm
]{geometry}

\renewcommand{\S}{\mathbb{S}}

\newcommand{\R}{\mathbb{R}}
\newcommand{\C}{\mathbb{C}}

\newcommand{\cV}{\mathcal{V}}
\newcommand{\cD}{\mathcal{D}}

\newcommand{\cR}{\mathcal{R}}
\newcommand{\cT}{\mathcal{T}}

\newcommand{\cKD}{\mathcal{KD}}

\newcommand{\tr}{\mathop\mathrm{tr}}

\newcommand{\rank}{\mathop\mathrm{rank}}

\newcommand{\Span}{\mathop\mathrm{Span}}

\newcommand{\supp}{\mathop\mathrm{supp}}
\newcommand{\Res}{\mathop\mathrm{Res}}

\renewcommand{\epsilon}{\varepsilon}

\newtheorem{thm}{Theorem}[section]
\newtheorem{prop}[thm]{Proposition}
\newtheorem{cor}[thm]{Corollary}
\newtheorem{lemma}[thm]{Lemma}
\newtheorem{claim}[thm]{Claim}
\theoremstyle{definition}
\newtheorem{defn}[thm]{Definition}

\theoremstyle{remark}
\newtheorem{remark}[thm]{Remark}

\begin{document}

\title[Matrix Completion Discriminants]{Matrix Completion Discriminants}

\author{Yoshiyuki Sekiguchi}
\address{Graduate School of Marine Science and Technology,
Tokyo University of Marine Science and Technology,
Etchujima 2-1-8, Koto-ku, Tokyo 135-8533, Japan}
\email{yoshi-s@kaiyodai.ac.jp}

\author{Shin-ichi Tanigawa}
\address{Department of Business Economics,
School of Management, Tokyo University of Science,
1-11-2 Fujimi, Chiyoda-ku, Tokyo 102-0071, Japan}
\email{tanigawa@rs.tus.ac.jp}

\date{}

\subjclass[2020]{Primary 15A83; Secondary 14M12, 05C50, 13P15}

\keywords{Matrix completion, Discriminant, Positive semidefinite completion,
Algebraic boundary, Spectrahedral geometry, Resultants}

\begin{abstract}

We introduce the matrix completion discriminant of a graph as the Zariski closure of the set of partial symmetric matrices whose completion fibers fail to meet a rank stratum transversely.
A clique-sum formula for the matrix completion discriminant is established for arbitrary graphs, while explicit defining equations for cycles are obtained in terms of iterated resultants. 
Using these results together with the analysis of $K_4$-minor-free graphs, we prove that for every cycle-completable graph, the discriminant factors into determinant factors from maximal cliques and resultant factors from chordless cycles.

\end{abstract}

\maketitle


\section{Introduction}
The matrix completion problem is to fill in the missing entries of a partially
specified matrix so that the completed matrix satisfies a prescribed property.
A primary example is the low-rank matrix completion problem, in which one is asked to find a completion of low rank.
%
In this paper, we study {\em symmetric matrix} completion
and introduce a discriminant which captures degeneracies in these completion problems.
To motivate this definition, consider the problem of completing
the following symmetric matrix to one of rank at most one:
\[
\begin{pmatrix}
a_{11} & x_{12} \\ 
x_{12} & a_{22}
\end{pmatrix},
\]
where $a_{11}$ and $a_{22}$ are prescribed entries over $\mathbb{C}$
and $x_{12}$ is unknown.
For generic values of $a_{11}$ and $a_{22}$, 
there are two distinct
completions, given by $x_{12}=\pm\sqrt{a_{11}a_{22}}$.
These two solutions coincide precisely when $a_{11}a_{22}=0$,
or equivalently, when the discriminant of
$x_{12}^2-a_{11}a_{22}$ vanishes.
Geometrically, this degeneracy means that the affine space 
$\left\{
\left(\begin{smallmatrix}
a_{11} & x_{12} \\ 
x_{12} & a_{22}
\end{smallmatrix}\right)
: x_{12}\in \mathbb{C}
\right\}$ 
of completions fails to meet the rank stratum of symmetric matrices transversely if $a_{11}a_{22}=0$.

This geometric interpretation leads to our notion of discriminant,
which we now define formally.
Let $n$ be a positive integer and let $G=(V,E)$ be a graph with the vertex set 
$V=[n]=\{1,2,\dots, n\}$.
A map $a:V\cup E\rightarrow \mathbb{C}$ is called a {\em $G$-partial symmetric matrix}.
A {\em completion} of $a$ is a symmetric matrix $X$ of size $n$ such that 
$X_{ii}=a(i)$ for each $i\in V$ 
and $X_{ij}=a(ij)$ for each $ij\in E$.
Let $\mathbb{S}^n$ be the set of all symmetric matrices of size $n$ over $\mathbb{C}$,
$M_r$ be the subset of $\mathbb{S}^n$ consisting of matrices of rank $r$ for each integer $r$,
and $A_G$ be the set of all $G$-partial symmetric matrices.
The map $p_G:\mathbb{S}^n\rightarrow A_G$ is defined 
such that $X\in \mathbb{S}^n$ is a completion of $p_G(X)$.
Then, for $a\in A_G$, 
$p_G^{-1}(a)$ is the set of all completions of $a$
and $p_G^{-1}(a)\cap M_r$ is the set of all rank-$r$ completions.
A completion $X$ of $a$ is called \emph{degenerate} if
$p_G^{-1}(a)$ fails to meet $M_r$ transversely at $X$,
where $r=\rank X$.
We define the \emph{matrix completion discriminant} of $G$, or the \emph{MC discriminant}, by
$$
\cD(G)
=
\overline{
\{a\in A_G:
p_G^{-1}(a)\text{ contains a degenerate completion}\}
}^{\mathrm{Zar}}.
$$
Our definition of MC discriminants follows the notion of matrix discriminants given in
\cite[Section 2.2.3]{FruehbisKruegerZach}.
More broadly, the notion of discriminants describes the set of parameters at which a generic algebraic or geometric behavior fails,
and it arises in algebraic geometry, singularity theory, combinatorics, optimization, and algebraic statistics; see, e.g., \cite{GKZ,JiangSturmfels,SturmfelsUhler}.

The main result of this paper is a concise combinatorial description of the MC discriminant of
{\em $K_4$-minor-free graphs} or more generally {\em cycle-completable graphs}.
Our approach follows a line of research on the combinatorial aspects of the positive semidefinite matrix completion (PSD completion) problem
in combinatorial matrix theory. 
A landmark classical result in this context is a characterization of chordal graphs in terms of positive semidefinite completability 
due to Grone, Johnson, S\'a, and Wolkowicz~\cite{GroneJohnsonSaWolkowicz}.
Characterizations of $G$-partial symmetric matrices admitting PSD completions have been obtained for several basic graph classes~\cite{BarrettJohnsonTarazaga,LaurentSeriesParallel,BarrettJohnsonLoewy}.
Cycle-completable graphs mentioned above were introduced by Barrett, Johnson, and Loewy in~\cite{BarrettJohnsonLoewy} 
and play a key role in this context.

Our main result shows that, for cycle-completable graphs $G$, 
the irreducible components of the MC discriminant $\cD(G)$ are governed by maximal cliques and chordless cycles in $G$,
and a defining polynomial of $\cD(G)$ is 
\[
\prod_{L:\text{ a maximal clique}} \det X[L] \prod_{\substack{C: \text{ a chordless cycle  } \\ \text{of length at least four}}} \operatorname{ItrRes}(C)
\]
where $X=(x_{ij})$ is a symmetric matrix of indeterminates, $X[L]$ denotes its principal submatrix indexed by $L$,
and $\operatorname{ItrRes}(C)$ denotes a polynomial obtained by a simple elimination procedure along a triangulation of $C$. 
In other words, $\cD(G)$ can be computed just by enumerating maximal cliques and chordless cycles,
if $\operatorname{ItrRes}(C_m)$ has been computed beforehand for each cycle $C_m$ of length $m=4,\dots, n$.
The precise description of the main theorem will be given in Subsection~\ref{sec:main_theorem}.

As we will see in Subsection~\ref{subsec:stress}, the MC discriminant is closely related to 
the theory of {\em equilibrium (matrix-completion) stresses} in graph rigidity theory.
Indeed, Proposition~\ref{prop:stress-transversality} and Lemma~\ref{lem:1} show that a completion $X$ of $a$ is degenerate if and only if 
its minimal-rank Gram representation admits a nonzero (matrix-completion) stress.
Hence, 
$$
\cD(G)
=
\overline{
p_G( \{ X\in \mathbb{S}^n :
\text{ a minimal-rank Gram rep.~of $X$ has a nonzero stress}
\}) }^{\mathrm{Zar}}.
$$
Thus, in view of our result, one might expect the stress space of a fixed completion to be spanned by stresses supported on maximal cliques and chordless cycles when the underlying graph is cycle-completable.
This is, however, false in contrast with the PSD completion problem.
For  a PSD completion of a cycle-completable graph, 
a complete description of the set of PSD stresses is known, that is, the cone generated by 
PSD stresses  supported on maximal cliques and chordless cycles.
Without positivity, however, this fails already for $K_{2,3}$, 
as shown in Section~\ref{subsec:global-degeneracy-K23}.
To the best of our knowledge, no structural characterization of arbitrary stresses is known, even for $K_4$-minor-free graphs.
This creates a central difficulty in proving our main theorem.
Starting only from the existence of a nonzero complex stress, we must recover a clique or cycle degeneracy condition on the prescribed partial matrix, without a structural description of that stress.
Our proof addresses this difficulty.

Nondegeneracy conditions have also been studied in the context of semidefinite programs 
in \cite{AlizadehHaeberlyOverton} and play an important role in the convergence rate analysis of primal-dual interior-point methods \cite{AlizadehHaeberlyOvertonIPM}.
A directly related result to our work is a paper by Qi~\cite{QiChordalNondegeneracy}
concerning a characterization of chordal graphs in terms of constraint nondegeneracy
of positive semidefinite completions,
which can be understood as the case when $G$ is chordal and completions are real and positive semidefinite in our setting.

Laurent, Mascarin, and Telen~\cite{LaurentMascarinTelen} recently uploaded a paper concerning the algebraic boundaries of graph elliptopes and positive semidefinite completion cones.
For cycle-completable graphs, their defining polynomial for
the algebraic boundary of the PSD completion cone agrees,
up to a nonzero scalar, with that of our MC discriminant.
This agreement does not follow directly from the definition.
In fact, we will see that the 
MC discriminant may not be equal to the algebraic boundary 
if the underlying graph is not cycle-completable, as shown by the example of $K_{3,3}$.
We provide a more detailed comparison  in Section~\ref{sec:elliptopes}.

The paper is organized as follows.
Section~\ref{sec:pre} introduces the basic notions and states the main results.
Section~\ref{sec:elliptopes} compares MC discriminants with algebraic boundaries of PSD completion cones, and Sections~\ref{sec:clique}--\ref{sec:K4-atoms} develop the proof of the main theorem.
A detailed outline of the proof is given in Subsection~\ref{subsec:proof-outline}.

\section{Preliminaries and main results}
\label{sec:pre}

\subsection{Basic notions}
\label{subsec:def}

Throughout the paper, all graphs are finite and simple.
Let $G=(V,E)$ be a graph and set $n:=|V|$. 
For $S\subseteq V$, we denote by $G[S]$ the subgraph induced by $S$. 
Let $\operatorname{MaxCliq}(G)$ be the set of maximal cliques of $G$ and 
let $\operatorname{IndCyc}(G)$ be the set of induced cycles of length at least $4$. 
Suppose that $G_1$ and $G_2$ are subgraphs of $G$ such that $G=G_1\cup G_2$ and $G_1\cap G_2$ is a clique $K$, where $K$ may be the empty set.
This decomposition is called a \emph{clique-sum} and is denoted by $G=G_1+_K G_2$. It is \emph{nontrivial} if both $V(G_1)\setminus K$ and $V(G_2)\setminus K$ are nonempty. A graph admitting no nontrivial clique-sum decomposition is called a \emph{clique-sum atom}.
A graph is \emph{chordal} if it can be obtained from complete graphs by iterated clique-sums; equivalently, it has no induced cycle of length at least $4$.
A graph is \emph{$K_4$-minor-free} if $K_4$ cannot be obtained from it by deleting vertices or edges, or by contracting edges.
We call a graph \emph{cycle completable} if it can be obtained from complete graphs and $K_4$-minor-free graphs by iterated clique-sums.

Throughout the paper, 
all algebraic sets are considered over $\C$. 
For a subset $W$ of a complex affine space, let
$\overline{W}^{\mathrm{Zar}}$ and $\overline{W}^{\mathrm{Euc}}$
denote its Zariski and complex Euclidean closures, respectively.
Let $\S^V$ denote the vector space of complex symmetric matrices whose rows and columns are indexed by $V$. 
We equip $\S^V$ with the nondegenerate symmetric bilinear trace pairing $\langle Y,Z\rangle:=\tr(YZ)$.
Orthogonal complements of subspaces of $\S^V$ are taken with respect to this pairing.
When $V=[n]$, we use $\S^n$. Let $\S^V_+$ and $\S^n_+$ be the set of real positive semidefinite matrices in $\S^V$ and $\S^n$, respectively.
For $X\in\S^V$ and $I,J\subseteq V$, let $X[I,J]$ be the corresponding submatrix and set $X[I]:=X[I,I]$. 

Let $G=(V,E)$ be a graph with $n$ vertices.
Recall that $p_G:\mathbb{S}^V\rightarrow A_G$
is the projection onto the space $A_G$ of $G$-partial symmetric matrices.
We say a matrix $X\in\S^V$ is a completion of $a\in A_G$ if $p_G(X)=a$.
For a clique $L$ of $G$ and $a\in A_G$, we denote by $a[L]$ the fully specified principal submatrix indexed by $L$.
For a subgraph $H\subseteq G$, let $p_{G,H}:A_G\longrightarrow A_H$ denote the coordinate restriction to the diagonal and edge entries supported by $H$.
Thus, for $X\in\S^{V}$,
\[
p_{G,H}(p_G(X))=p_H(X[V(H)]).
\]
For $F\subseteq H\subseteq G$, these maps satisfy $p_{H,F}\circ p_{G,H}=p_{G,F}$; in particular, $p_{G,G}$ is the identity. 

For an ambient vertex set $\{v_1,\ldots,v_n\}$, let $X=(x_{ij})_{1\le i,j\le n}$ be a symmetric matrix of indeterminates, where $x_{ij}$ corresponds to the pair $(v_i,v_j)$ and $x_{ji}=x_{ij}$.
The polynomial ring
$
R_n:=\C[x_{ij}:1\le i\le j\le n]
$
will be the underlying ring throughout our discussion.
For a graph $G$ with $V(G)\subseteq\{v_1,\ldots,v_n\}$, let $R_G\subseteq R_n$ be the subring generated by the diagonal indeterminates $x_{ii}$ for $v_i\in V(G)$ and the edge indeterminates $x_{ij}$ for $\{v_i,v_j\}\in E(G)$.
We identify $R_G$ with the coordinate ring of $A_G$, so that $x_{ij}(a)=a_{ij}$ for $a\in A_G$.
For a collection $\mathcal F\subseteq R_G$ of polynomials, let $\cV_{A_G}(\mathcal F)$ denote its common zero set in $A_G$.
For a single polynomial $f$, we simply write $\cV_{A_G}(f)$.
When the ambient space is clear from the context, we omit the subscript and write $\cV(\mathcal F)$ or $\cV(f)$.
For $H\subseteq G$, the pullback by $p_{G,H}$ identifies $R_H$ with its natural subring in $R_G$. In particular, for $f\in R_H$,
\[
\cV_{A_G}(f)=p_{G,H}^{-1}\bigl(\cV_{A_H}(f)\bigr).
\]
If $L$ is a clique of $G$, then $\det X[L]\in R_G$, and its value at $a\in A_G$ is $\det a[L]$.
For an edge $e=\{i,j\}$, we write $X[e]:=X[\{i,j\}]$, and similarly for $a[e]$.
%

A {\em defining polynomial} of a hypersurface is a square-free polynomial whose zero set is the hypersurface. It is determined up to a nonzero scalar.
For polynomials $f$ and $g$ in the same polynomial ring over $\C$, we write $f\doteq g$ if $f=cg$ for some $c\in\C\setminus\{0\}$.

\subsection{Degeneracy and MC discriminants}

Recall that, for $0\le r\le n$, $M_r$ denotes the rank-$r$ stratum of $\S^V$. 
For $X\in M_r$, let $T_{M_r}(X)$ denote its tangent space at $X$. 
If $X$ is a completion of $a\in A_G$ and $r=\rank X$, we call $X$ \emph{nondegenerate} if
$$
\ker p_G+T_{M_r}(X)=\S^V,
$$
and \emph{degenerate} otherwise. 
%
An explicit description of the orthogonal complement of $T_{M_r}(X)$ yields the following characterization of degeneracy.
We denote the orthogonal complement of $\ker p_G$ by
\[
\S_{G}:=\{Z\in\S^V:Z_{ij}=0\text{ if }i\ne j\text{ and }\{i,j\}\notin E\}.
\]

\begin{prop}
\label{prop:stress-transversality}
Let $a\in A_G$ and let $X$ be a completion of $a$. Then $X$ is degenerate if and only if there exists $Z\in\S_{G}\setminus\{O\}$ such that
$XZ=O$.
\end{prop}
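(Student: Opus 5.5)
The plan is to dualize the transversality condition with the trace pairing and then compute the normal space of the rank stratum. Since $\langle\cdot,\cdot\rangle$ is nondegenerate on $\S^V$, a subspace equals $\S^V$ exactly when its orthogonal complement is $\{O\}$. Hence
\[
\ker p_G+T_{M_r}(X)=\S^V \iff (\ker p_G)^{\perp}\cap T_{M_r}(X)^{\perp}=\{O\}.
\]
As stated above, $(\ker p_G)^\perp=\S_G$. Indeed, $\ker p_G$ consists of the symmetric matrices supported on the non-edge off-diagonal positions, and the pairing of $Z$ with $E_{ij}+E_{ji}$ is $2Z_{ij}$. Taking this as given, the proposition reduces to the identity
\[
T_{M_r}(X)^\perp=\{Z\in\S^V: XZ=O\},
\]
where $r=\rank X$. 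Degeneracy then says exactly that some nonzero $Z\in\S_G$ satisfies $XZ=O$.

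To compute the tangent space, I would use the local parametrization of $M_r$. Every symmetric complex matrix of rank $r$ can be written as $X=PDP^{\top}$ with $P\in\C^{n\times r}$ of full column rank and $D\in\S^r$ invertible; this follows from diagonalizing the quadratic form over $\C$. The map $(P,D)\mapsto PDP^\top$ is a submersion onto $M_r$ near $(P,D)$; equivalently, $M_r$ is a smooth orbit of the congruence action $X\mapsto gXg^\top$ of $GL_n$. Differentiating along curves gives
\[
T_{M_r}(X)=\{\dot P D P^\top + P D \dot P^\top + P\dot D P^\top\}.
\]
Using the group action, this equals $\{AX+XA^\top : A\in\C^{n\times n}\}$, which is the tangent space to the orbit at $X$.

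The orthogonal complement then follows by a direct calculation. For symmetric $Z$,
\[
\langle AX+XA^\top, Z\rangle=\tr(AXZ)+\tr(XA^\top Z)=2\tr(AXZ).
\]
This vanishes for all $A$ exactly when $XZ=O$, since the map $A\mapsto\tr(AW)$ is a nondegenerate pairing on $\C^{n\times n}$.

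The main obstacle is justifying that the tangent space of $M_r$ at $X$, as a smooth variety, equals the orbit tangent $\{AX+XA^\top\}$. This needs two facts. First, $M_r$ is a single $GL_n$-orbit under congruence: over $\C$, all rank-$r$ symmetric matrices are congruent to $\mathrm{diag}(I_r,0)$. Second, for an orbit of an algebraic group action, the orbit map is smooth onto the orbit, so its differential surjects onto the tangent space. As a check, I would compare dimensions. With $X=\mathrm{diag}(I_r,0)$, the orbit tangent consists of the matrices whose lower-right $(n-r)\times(n-r)$ block is zero. Its dimension is $\binom{n+1}{2}-\binom{n-r+1}{2}$, which matches $\dim M_r$.
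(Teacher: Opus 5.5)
Your proof is correct and follows the paper's route: you dualize with the trace pairing, use $(\ker p_G)^\perp=\S_G$, and identify $T_{M_r}(X)^\perp=\{Z\in\S^V:XZ=O\}$. The only difference is that the paper cites this last identity as a standard fact (van der Holst--Lov\'asz--Schrijver), whereas you derive it from the congruence-orbit description $T_{M_r}(X)=\{AX+XA^\top\}$, which is a valid self-contained justification.
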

\begin{proof}
Let $r=\rank X$. It is a standard fact that 
$T_{M_r}(X)^\perp$ is written as $T_{M_r}(X)^\perp=\{Z\in\S^V:XZ=O\}$,
see, e.g., \cite[Section~2.1]{vanDerHolstLovaszSchrijver}.
Since $(\ker p_G)^\perp=\S_{G}$, we obtain
$
\bigl(\ker p_G+T_{M_r}(X)\bigr)^\perp
=\{Z\in\S_{G}:XZ=O\}$,
which proves the assertion.
\end{proof}

For a graph $G=(V,E)$, we define
$$
\Sigma(G):=
\{a\in A_G:p_G^{-1}(a)\text{ contains a degenerate completion}\}.
$$
By Proposition~\ref{prop:stress-transversality}, we see that
$$
\Sigma(G)
=
\{a\in A_G:
\exists\,X\in\S^V,\,Z\in\S_{G}\setminus\{O\},\ p_G(X)=a\text{ and }XZ=O\}.
$$
Then, the MC discriminant $\cD(G)$ of $G$ introduced in the introduction, which is our main target of study,  is given by
\begin{equation}\label{eq:Z}
  \begin{split}
\cD(G):&=\overline{\Sigma(G)}^{\mathrm{Zar}} \\
&=\overline{\{a\in A_G:
\exists\,X\in\S^V,\,Z\in\S_{G}\setminus\{O\},\ p_G(X)=a\text{ and }XZ=O\}}^{\mathrm{Zar}}. 
  \end{split}
\end{equation}

\subsection{Degeneracy and stresses}
\label{subsec:stress}
The formulation of degeneracy given in Proposition~\ref{prop:stress-transversality}
leads to a further connection of degeneracy to rigidity theory.
The key is the following decomposition lemma.

\begin{lemma}
\label{lem:1}
Let $X\in\S^V$ have rank $r$. There exists $U=(u_i)_{i\in V}\in\C^{r\times n}$ of rank $r$ such that $X=U^TU$. Moreover, for every $Z\in\S^V$,
$XZ=O$ if and only if $UZ=O$.
\end{lemma}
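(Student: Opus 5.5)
The plan has two parts: first produce the factorization $X=U^TU$, then prove the equivalence $XZ=O\iff UZ=O$.

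\emph{Existence of the factorization.} Since $X$ is a complex symmetric matrix, we use the algebraic fact that every nondegenerate symmetric bilinear form over $\C$ is equivalent to the standard one $\sum_k y_k^2$. Concretely, symmetric Gaussian elimination (congruence diagonalization) gives an invertible $P\in\C^{n\times n}$ with
\[
P^TXP=\begin{pmatrix} I_r & O\\ O & O\end{pmatrix}.
\]
Here no square roots of negative numbers cause trouble, since every complex number has a square root. If a diagonal pivot vanishes while some off-diagonal entry $x_{ij}$ is nonzero, we first replace the basis vector $e_i$ by $e_i+e_j$. This makes the new diagonal entry $x_{ii}+2x_{ij}+x_{jj}$ or the entry $x_{jj}$ nonzero, after possibly swapping roles. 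This is the standard proof that symmetric forms are diagonalizable when the characteristic is not $2$.

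Write $Q=P^{-1}$. Then $X=Q^T\operatorname{diag}(I_r,O)Q$. Let $U\in\C^{r\times n}$ consist of the first $r$ rows of $Q$. Then $X=U^TU$. Moreover $U$ has rank $r$, because the rows of the invertible matrix $Q$ are linearly independent.

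\emph{The equivalence.} If $UZ=O$, then $XZ=U^T(UZ)=O$. Conversely, suppose $XZ=O$, so $U^T(UZ)=O$. Since $U$ has full row rank $r$, the $n\times r$ matrix $U^T$ has full column rank, so $U^T$ is injective as a map $\C^r\to\C^n$. Applying this injectivity column by column to $UZ$ gives $UZ=O$. Equivalently, one may take a left inverse $W$ with $WU^T=I_r$, for instance $W=(UU^*)^{-1}U^*$ using the conjugate transpose, and multiply on the left.

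\emph{Expected obstacle.} The only subtle point is the first step. Over $\C$ one must not use the transpose-based Gram matrix $UU^T$ as if it were positive definite, since it may be singular; this is why I would argue through the conjugate transpose $U^*$ or directly through injectivity. Likewise, the existence of $U$ relies on congruence diagonalization over an algebraically closed field rather than on the spectral theorem. Everything else is routine.
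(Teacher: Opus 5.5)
Your proof is correct. The equivalence half matches the paper, which also gets $UZ=O$ from $U^T(UZ)=O$ via the linear independence of the columns of $U^T$. The factorization half takes a different route. The paper cites the Autonne--Takagi factorization $X=W\Sigma W^T$, with $W$ unitary and $\Sigma$ nonnegative diagonal, and takes $U=\Sigma_r^{1/2}W_r^T$ from the $r$ nonzero singular values. You use congruence diagonalization $P^TXP=\operatorname{diag}(I_r,O)$ instead. Your route is purely algebraic and self-contained: it needs only characteristic $\neq 2$ and square roots of all scalars, and your pivot fix is valid. The paper's one-line citation is shorter. Takagi also yields extra structure, namely $UU^*=\Sigma_r$, so the rows of $U$ are Hermitian-orthogonal, but the lemma never uses this.

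One slip in your parenthetical: $(UU^*)^{-1}U^*$ is not defined, since it multiplies an $r\times r$ matrix by an $n\times r$ matrix. A correct left inverse of $U^T$ is $(\bar U U^T)^{-1}\bar U$, the transpose of the right inverse $U^*(UU^*)^{-1}$ of $U$. Your main argument appeals to injectivity of $U^T$ directly, so this does not affect the proof.
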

\begin{proof}
This follows from the Autonne--Takagi factorization for complex symmetric matrices \cite[Corollary~4.4.4]{HornJohnson} and the linear independence of the columns of $U^T$.
\end{proof}

We call such a factorization $X=U^TU$ a \emph{minimal-rank Gram representation} of $X$. 
Note that the Gram representation uses the symmetric bilinear form
$x^Ty$, rather than the Hermitian form $x^*y$.

For $Z\in\S_{G}$, writing $U=(u_i)_{i\in V}$, the condition $UZ=O$ is equivalent to
\[
Z_{ii}u_i+\sum_{j:\{i,j\}\in E}Z_{ij}u_j=0
\quad (i\in V).
\]
This is a matrix completion analogue of
the \emph{equilibrium conditions} for stresses in rigidity theory,
and this variant has also been studied extensively; see, e.g.~\cite{SingerCucuringu,JacksonJordanTanigawa,LaurentVarvitsiotisSAP}.
This motivates us to call $Z\in \S_{G}$ a \emph{$U$-stress} if $UZ=O$.
Similarly, for $X\in\S^V$, a matrix $Z\in\S_{G}$ is called an
\emph{$X$-stress} (or simply a \emph{stress} when $X$ is clear) if $XZ=O$.



\subsection{Small examples}
\label{subsec:small-examples}

Let $C_4$ be the cycle graph with vertices $\{1,2,3,4\}$ and $C_4^+$ be the graph obtained by adding the edge $\{1,3\}$ to $C_4$.
For a partial symmetric matrix $a$, the completions have the forms\par
{\scriptsize%
\[
X_{C_4^+}=
\begin{pmatrix}
a_{11} & a_{12} & a_{13} & a_{14}\\
a_{12} & a_{22} & a_{23} & \boldsymbol{x_{24}}\\
a_{13} & a_{23} & a_{33} & a_{34}\\
a_{14} & \boldsymbol{x_{24}} & a_{34} & a_{44}
\end{pmatrix},\quad
X_{C_4}=
\begin{pmatrix}
a_{11} & a_{12} & \boldsymbol{x_{13}} & a_{14}\\
a_{12} & a_{22} & a_{23} & \boldsymbol{x_{24}}\\
\boldsymbol{x_{13}} & a_{23} & a_{33} & a_{34}\\
a_{14} & \boldsymbol{x_{24}} & a_{34} & a_{44}
\end{pmatrix}
\]}%
for $C_4^+$ and $C_4$, respectively.
On the other hand, a stress $Z$ for degeneracy given in Proposition~\ref{prop:stress-transversality} has the forms\par
{\scriptsize%
\[
Z_{C_4^+}=
\begin{pmatrix}
z_{11} & z_{12} & z_{13} & z_{14}\\
z_{12} & z_{22} & z_{23} & 0\\
z_{13} & z_{23} & z_{33} & z_{34}\\
z_{14} & 0 & z_{34} & z_{44}
\end{pmatrix},\quad
Z_{C_4}=
\begin{pmatrix}
z_{11} & z_{12} & 0 & z_{14}\\
z_{12} & z_{22} & z_{23} & 0\\
0 & z_{23} & z_{33} & z_{34}\\
z_{14} & 0 & z_{34} & z_{44}
\end{pmatrix}
\]
}
for $C_4^+$ and $C_4$, respectively.
Equation~\eqref{eq:Z} allows us to compute $\cD(G)$ by saturation and elimination; see, e.g.,\cite[Chapter 8]{CLOIdeals}. 
Then Macaulay2 gives
\[
\cD(C_4^+)=\cV(\det X[123]\cdot \det X[134]),
\]
meaning that the two triangle determinants define the two irreducible components of $\cD(C_4^+)$.
On the other hand, a similar computation for $C_4$ gives 
\[
\cD(C_4)=\cV\left(\left(\prod_{e\in E(C_4)}
\det X[e]\right)
\cdot
\operatorname{Res}_{x_{13}}
\bigl(
\det X[123],
\det X[134]
\bigr)\right), 
\]
where $\operatorname{Res}_{x_{13}}$ 
denotes the resultant with respect to $x_{13}$.
Note that each edge forms a maximal clique in $C_4$ and hence 
the first factor $\prod_{e\in E(C_4)}\det X[e]$ is the product of the determinants associated with the maximal cliques.
The second factor is a natural polynomial analogue of 
the combinatorial fact that the cycle $1234$ can be obtained 
from the union of the two triangles $123$ and $134$ by eliminating the common edge $13$.
We will see that this phenomenon holds for all cycle-completable graphs.

\subsection{Clique-degenerate loci and essential discriminants}
\label{subsec:mc-discriminant}
In the above examples, we have $\cV(\det X[L])\subseteq\cD(G)$ for every maximal clique $L$.
We show that this is always the case for every graph $G$.
For a graph $G$, we define 
the \emph{clique-degenerate locus} by
$$
\cKD(G):=
\bigcup_{L\in\operatorname{MaxCliq}(G)}
\cV_{A_G}(\det X[L]).
$$
\begin{prop}
\label{prop:max-clique-component}
For every graph $G$, $\cKD(G)\subset\cD(G)$.
\end{prop}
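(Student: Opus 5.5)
Since $\cD(G)$ is Zariski closed, it suffices to show that for each maximal clique $L$, a Zariski dense subset of $\cV_{A_G}(\det X[L])$ lies in $\Sigma(G)$. Once the inclusion $\Sigma(G)\cap \cV(\det X[L])$ dense in $\cV(\det X[L])$ is established, taking closures gives $\cV(\det X[L])\subseteq\cD(G)$. The union over $L$ then yields the claim.

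The construction is direct. Fix a maximal clique $L$ and $a\in A_G$ with $\det a[L]=0$. Pick a nonzero vector $w\in\C^L$ with $a[L]w=0$. The natural candidate stress is supported on $L$, namely $Z=\tilde w\tilde w^T$, where $\tilde w\in\C^V$ extends $w$ by zeros. Since $L$ is a clique, $Z\in\S_G\setminus\{O\}$.

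We then need a completion $X$ of $a$ with $X\tilde w=0$. This amounts to $X[V\setminus L,L]\,w=0$, together with $X[L]w=a[L]w=0$, which already holds. The entries of $X[V\setminus L,L]$ in a row $v\notin L$ are partly prescribed (the edges from $v$ into $L$) and partly free (the non-edges). Because $L$ is maximal, every $v\notin L$ has at least one non-neighbor $\ell\in L$, so each row contains at least one free entry.

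For row $v$ we must solve one linear equation $\sum_{\ell\in L}X_{v\ell}w_\ell=0$ in the free unknowns $X_{v\ell}$, $\ell\notin N(v)$. This is solvable whenever $w_\ell\neq 0$ for some non-neighbor $\ell$ of $v$. The remaining free entries of $X$, those inside $V\setminus L$, are chosen arbitrarily. The resulting $X$ satisfies $XZ=O$, so $a\in\Sigma(G)$ by Proposition~\ref{prop:stress-transversality}.

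The main obstacle is guaranteeing that $w$ has full support on $L$, or at least is nonzero on some non-neighbor of each outside vertex. This is where density is used. The hypersurface $\cV(\det X[L])$ is irreducible, since the symmetric determinant is irreducible for $|L|\ge1$. Its generic point has $a[L]$ of rank exactly $|L|-1$, and then $w$ spans the kernel and is a column of $\operatorname{adj}(a[L])$.

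Each coordinate condition $w_\ell\neq0$ is a nonempty Zariski open condition on this irreducible hypersurface. For instance, it holds for $a[L]=\operatorname{diag}(0,1,\dots,1)$ after an orthogonal change of coordinates, or one can check it directly on an explicit corank-one matrix whose kernel vector has no zero entries, such as $I-\frac{1}{|L|}\mathbf{1}\mathbf{1}^T$ up to scaling. This example is valid when $|L|\ge2$; for $|L|=1$, $w$ is trivially full-support.

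Intersecting finitely many such open sets gives a dense open subset of $\cV(\det X[L])$ contained in $\Sigma(G)$, completing the argument.
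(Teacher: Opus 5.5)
Your proposal is correct and follows essentially the same argument as the paper: the rank-one stress $\tilde w\tilde w^T$ supported on the maximal clique $L$, a completion built row by row using a free non-edge entry $X_{v\ell}$ that maximality of $L$ guarantees, and density in the irreducible hypersurface $\cV_{A_G}(\det X[L])$ of the locus where the kernel vector has full support. The paper expresses that open condition as the nonvanishing of all diagonal cofactors of $a[L]$; at corank one, $\operatorname{adj}(a[L])$ is a nonzero multiple of $ww^T$, so this is the same polynomial description your adjugate remark points to.
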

\begin{proof}
Let $L\in\operatorname{MaxCliq}(G)$. Set $m:=|L|$ and $H_L:=\cV_{A_G}(\det X[L])$. 
Then $H_L$ is irreducible. 
Let $H_L^\circ$ be the set of points of $H_L$ such that $\rank a[L]=m-1$ and $\ker a[L]$ is generated by a vector $\eta=(\eta_\ell)_{\ell\in L}$ all of whose entries are nonzero. Equivalently, all diagonal cofactors of $a[L]$ are nonzero. 
$H_L^\circ$ is Zariski dense in $H_L$.

Take $a\in H_L^\circ$ and extend $\eta$ by zero to a vector $\widetilde\eta$ on $V(G)$. 
Now we construct a completion $X$ of $a$ such that $X\widetilde\eta=0$. 

First, set $X[L]=a[L]$. Then $X[L,V(G)]\widetilde\eta=a[L]\eta=0$. 
Next, let $i\notin L$. 
Since $L$ is a maximal clique, there exists $\ell_0\in L$ with $\{i,\ell_0\}\notin E(G)$. 
Since $X_{i\ell_0}$ is a free entry and $\eta_{\ell_0}\neq0$, 
we can choose the value of $X_{i\ell_0}$ so that
$\sum_{\ell\in L}X_{i\ell}\eta_\ell=0$. Doing this for every $i\notin L$ and choosing all remaining free entries arbitrarily gives a completion $X$ such that $p_G(X)=a$ and $X\widetilde\eta=X[V(G),L]\eta=0$. Let
$Z:=\widetilde\eta\widetilde\eta^T$. Since $L$ is a clique, we have $Z\in\S_{G}\setminus\{O\}$, and $XZ=O$. 

Thus $H_L^\circ\subset\Sigma(G)$. Because $H_L^\circ$ is Zariski dense in $H_L$, we obtain $H_L\subset\cD(G)$. Taking the union over all maximal cliques $L$ proves the claim.
\end{proof}

In view of Proposition~\ref{prop:max-clique-component},
the primary objective of the subsequent discussion is to understand the remaining part of $\cD(G)$.
Formally, we define the \emph{essential discriminant} of $G$ by
$$
\cR(G):=
\overline{\Sigma(G)\setminus\cKD(G)}^{\mathrm{Zar}}.
$$
 Proposition~\ref{prop:max-clique-component} implies that  $\cD(G) = \cKD(G) \cup \cR(G)$.

\subsection{Iterated resultants along a triangulation}
\label{subsec:iterated-resultants}

The example of $C_4$ in Subsection~\ref{subsec:small-examples} shows that the essential discriminant need not be empty. In this case,
\[
\cR(C_4)=\cV\!\left(
\operatorname{Res}_{x_{13}}\bigl(\det X[123],\det X[134]\bigr)
\right),
\]
and this hypersurface is not contained in $\cKD(C_4)$. 
The fact for $C_4$ extends naturally to arbitrary cycles 
by triangulating a cycle and repeatedly taking resultants of triangle determinants with respect to the chord variables. 
We now define this construction formally.

Let $n\ge3$, and let $C_n$ be the cycle on vertices $v_1,\ldots,v_n$ in cyclic order.
A \emph{triangulation} $T$ of $C_n$ is obtained by adding noncrossing chords to the cycle drawn on a circle so that every bounded face is a triangle.
For $n\ge4$, an \emph{ear} of $T$ is a triangle $\{v_s,v_i,v_t\}$ whose edges $\{v_s,v_i\}$ and $\{v_i,v_t\}$ belong to the boundary cycle $C_n$.
We call $v_i$ its \emph{ear vertex}.
Removing $v_i$ and replacing these two boundary edges by the chord $\{v_s,v_t\}$ gives a smaller cycle $C'$ with the remaining triangulation $T'$.
An \emph{ear-removal order} is a sequence of vertices removed in this way, each an ear vertex of the current triangulation, until a single triangle remains.
Pick an ear-removal order of $T$.
We define the \emph{iterated resultant} $\operatorname{ItrRes}(C_n)$ recursively as follows.
If $n=3$, set $\operatorname{ItrRes}(C_3):=\det X[V(C_3)]$.
If $n\ge4$ and $v_i$ is the first vertex in the chosen order, with ear $\{v_s,v_i,v_t\}$, set
\[
\operatorname{ItrRes}(C_n)
:=\operatorname{Res}_{x_{st}}\!\left(
\det X[\{v_s,v_i,v_t\}],\,\operatorname{ItrRes}(C')
\right),
\]
where the inner iterated resultant uses the remaining ear-removal order on $T'$.
All internal chord variables are eliminated in the end, so $\operatorname{ItrRes}(C_n)\in R_{C_n}$.
Although this definition involves a choice of triangulation $T$ and ear-removal order, the following theorem shows that $\operatorname{ItrRes}(C_n)$ is well-defined up to a nonzero scalar.
The proof is given in Section~\ref{sec:cycles}.

\begin{thm}
\label{thm:cycle-iterated-resultant}
Let $n$ be an integer with $n\ge4$.
For every triangulation $T$ of $C_n$ and every ear-removal order,
\[
\cR(C_n)=\cV\bigl(\operatorname{ItrRes}(C_n)\bigr).
\]
Moreover, $\operatorname{ItrRes}(C_n)$ is irreducible and is independent of the triangulation and the ear-removal order up to a nonzero scalar.
\end{thm}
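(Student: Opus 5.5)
We will prove that $\cR(C_n)$ equals the image, under $p_{T,C_n}$, of a generic-corank-one locus on the triangulated graph, and then identify that image with $\cV(\operatorname{ItrRes}(C_n))$ by induction along the ear-removal order.

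\textbf{Step 1 (structure of stresses off $\cKD$).} Let $a\in\Sigma(C_n)\setminus\cKD(C_n)$, with completion $X$ and a nonzero stress $Z\in\S_{C_n}$, $XZ=O$. We first show that $\operatorname{rank} X = 2$ generically and that $Z$ has no zero edge entry. Suppose some edge entry $Z_{i,i+1}=0$. Then $Z$ is supported on a path, which is a chordal graph. A peeling argument then forces $\det a[e]=0$ for some edge $e$, or $Z=O$. The peeling works from a leaf of the path: the equilibrium equation $Z_{ii}u_i+Z_{i,i+1}u_{i+1}=0$ makes $u_i,u_{i+1}$ parallel, so either $\det a[\{i,i+1\}]=0$ or the corresponding coefficients vanish. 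Hence all $Z_{i,i+1}\ne0$. The equilibrium equations $Z_{ii}u_i+Z_{i,i-1}u_{i-1}+Z_{i,i+1}u_{i+1}=0$ then say that each $u_{i+1}$ lies in $\Span(u_{i-1},u_i)$. Consequently every triple $u_s,u_i,u_t$ spans at most a $2$-dimensional space. In particular $\operatorname{rank} X\le 2$, and every $3\times 3$ principal minor of $X$ vanishes. Restricting to the chords of $T$, the entries $X_{st}$ give a common root of all the triangle determinants $\det X[\{v_s,v_i,v_t\}]$, $\{v_s,v_i,v_t\}\in T$.

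Conversely, take $a$ off $\cKD$ and suppose chord values make all triangle determinants of $T$ vanish. Because $\det a[e]\neq 0$ for all edges, one propagates a rank-$2$ Gram representation $U$ along the triangles. A cycle of $n\ge4$ vectors in $\C^2$ always admits a nonzero three-term cyclic linear relation, which gives a stress. Hence
\[
\cR(C_n)=\overline{p_{T,C_n}\bigl(\{b\in A_T:\det b[\tau]=0\ \forall\ \text{triangles }\tau\}\bigr)\setminus\cKD(C_n)}^{\mathrm{Zar}}.
\]
The main obstacle is this step: the generic-rank argument, and handling the points where some $u_i$ is degenerate (isotropic) in the bilinear form $x^Ty$.

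\textbf{Step 2 (elimination equals iterated resultants).} We induct along the ear-removal order. For an ear $\{v_s,v_i,v_t\}$, the polynomial $\det X[\{v_s,v_i,v_t\}]$ is quadratic in $x_{st}$ with leading coefficient $-x_{ii}$. Off $x_{ii}=0$, the projection eliminating $x_{st}$ from $\cV(\det X[\{s,i,t\}],\operatorname{ItrRes}(C'))$ is therefore exactly $\cV(\operatorname{Res}_{x_{st}}(\cdot,\cdot))$, by the standard resultant property for a polynomial with nonvanishing leading coefficient. Taking closures, and checking that the resultant has no factor $x_{ii}$, gives $\cR(C_n)=\cV(\operatorname{ItrRes}(C_n))$. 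The check on $x_{ii}$ is done by specializing $x_{ii}=0$ and exhibiting a nonzero value.

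\textbf{Step 3 (irreducibility and independence).} The variety of rank-$\le2$ symmetric matrices is irreducible, and the projection of a sub-locus of it to $A_{C_n}$ is irreducible of codimension one. We check codimension one by a dimension count: the rank-$\le2$ locus has dimension $2n-1$, and $\dim A_{C_n}=2n$. So $\cR(C_n)$ is an irreducible hypersurface, independent of $T$, and $\operatorname{ItrRes}(C_n)$ is a power of its defining polynomial times a scalar. To show the exponent is $1$, we verify that the degree of $\operatorname{ItrRes}(C_n)$, computed recursively from resultant degrees, equals the degree of $\cR(C_n)$. Alternatively, we show that $\operatorname{ItrRes}(C_n)$ is square-free by finding a point where its gradient is nonzero, for instance a specialization where the generic fiber has two distinct roots in each chord variable. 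Since the zero set and the reduced structure are then both independent of $T$ and of the order, $\operatorname{ItrRes}(C_n)$ is independent of these choices up to a nonzero scalar.
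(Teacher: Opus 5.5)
Your Steps~1 and~2 are essentially right as far as they go. Off $\cKD(C_n)$ a stressed completion has rank $2$, so all triangle determinants of $T$ vanish at its chord entries. Ear by ear, the resultant then describes the projection of $W_T:=\{b\in A_T:\det b[\tau]=0\text{ for all triangles }\tau\text{ of }T\}$ away from $x_{ii}=0$. The gluing issue you flag is not the real difficulty: Gram vectors glue along a chord $\{s,t\}$ unless $x_{ss}=x_{tt}=x_{st}=0$, and this occurs only over a subset of $A_{C_n}$ of codimension at least two.

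What these steps actually give is
\[
\cV(\operatorname{ItrRes}(C_n))=\overline{p_{T,C_n}(W_T)}
\quad\text{and}\quad
\cR(C_n)=\overline{p_{T,C_n}(W_T)\setminus\cKD(C_n)}.
\]
At the end of Step~2 you silently drop the ``$\setminus\cKD(C_n)$''. Nothing in the proposal rules out a factor $\det X[e]$, $e\in E(C_n)$, in $\operatorname{ItrRes}(C_n)$. So you only obtain $\cR(C_n)\subseteq\cV(\operatorname{ItrRes}(C_n))$, possibly with extra hypersurface components inside $\cKD(C_n)$.

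Step~3 does not close this gap. Irreducibility of $\cR(C_n)$ yields $\operatorname{ItrRes}(C_n)\doteq\rho^k$ only once equality of zero sets is already known. Your route to $k=1$ is circular, because you have no independent computation of $\deg\cR(C_n)$. The claim $k=1$ has real geometric content: roughly, a generic point of $\cR(C_n)$ must admit only one admissible assignment of chord values, with transverse intersections. Compare $\operatorname{Res}_{\xi}\bigl(\xi^2-c,\,g(\xi^2)\bigr)\doteq g(c)^2$, where the elimination is two-to-one and the resultant is a square. The gradient alternative is only asserted, not carried out.

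Also, your dimension count gives only $\dim\cR(C_n)\le 2n-1$. The paper gets equality from the one-dimensionality of the stress space, which makes the Jacobian of $\Phi_n$ have rank $2n-1$.

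The missing ingredient, which is the heart of the paper's proof, is a quantitative degree bound.
\begin{itemize}
\item \emph{Lower bound.} Choose angles $\theta_1\gg\cdots\gg\theta_{n-1}$ and form $2^{n-2}$ sign-flipped configurations $u_i=(\cos\varphi_i,\sin\varphi_i)^T$. These share all diagonal and edge values except $x_{1n}$, yet give pairwise distinct values of $x_{1n}$. Hence $\deg_{x_{i,i+1}}\rho_{C_n}\ge2^{n-2}$.
\item \emph{Upper bound.} Let $F=\operatorname{Res}_{x_{1,n-1}}\bigl(\det X[\{v_{n-1},v_n,v_1\}],\rho_{C_{n-1}}\bigr)$. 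The product formula for the resultant gives $\deg_{x_{i,i+1}}F=2^{n-2}$ and $\operatorname{lc}_{x_{i,i+1}}F\doteq\prod_{k\ne i,i+1}x_{kk}^{2^{n-3}}$.
\item \emph{Conclusion.} Together these force the cofactor $H$ in $F=H\rho_{C_n}$ to be a power of $x_{nn}$, and then a constant. This rules out edge-determinant factors, diagonal factors, and multiplicity all at once.
\end{itemize}
Notice that the paper needs only the easy inclusion $\cV(\rho_{C_n})\subseteq\cV(F)$ and never the converse gluing direction of your Step~1. Independence of $T$ and of the ear order then follows by relabeling the recursion. Without some argument of this kind, your proof establishes neither $\cR(C_n)=\cV(\operatorname{ItrRes}(C_n))$ nor the irreducibility and independence claims.
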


\subsection{Main result}
\label{sec:main_theorem}
We are now ready to state our main theorem.
\begin{thm}
  \label{thm:main}
  Let $G$ be a cycle-completable graph.
  Then, $\cD(G)$ admits the following clique--cycle decomposition:
  $$
\cD(G) = \cKD(G)
\cup
\bigcup_{C\in\operatorname{IndCyc}(G)}
p_{G,C}^{-1}\cR(C).
$$
The defining polynomial for $\cD(G)$ is 
$$
\prod_{L\in\operatorname{MaxCliq}(G)}
\det X[L]
\prod_{C\in\operatorname{IndCyc}(G)}
\operatorname{ItrRes}(C),
$$
up to a nonzero scalar.
\end{thm}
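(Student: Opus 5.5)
The plan has three parts: a clique-sum formula that reduces Theorem~\ref{thm:main} to clique-sum atoms, an analysis of the atoms, and a final count of irreducible factors.

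\textbf{Step 1: a clique-sum formula for arbitrary graphs.} Let $G=G_1+_K G_2$. We aim to show
\[
\cD(G)=p_{G,G_1}^{-1}\cD(G_1)\cup p_{G,G_2}^{-1}\cD(G_2).
\]
For ``$\supseteq$'', take a degenerate completion $X_1$ of $p_{G,G_1}(a)$ with stress $Z_1$, and extend $X_1$ to a completion $X$ of $a$ with $XZ_1=O$, where $Z_1$ is extended by zero. To do this, we first reduce to generic $a$ with $a[K]$ nonsingular. Then we solve for the free entries $X[V_2\setminus K,V_1\setminus K]$ from the linear condition $X[V_2\setminus K,V_1]Z_1=O$, using a minimal-rank Gram representation (Lemma~\ref{lem:1}). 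Equivalently, we glue Gram vectors along $K$: the two Gram configurations can be aligned on $K$ by an orthogonal map because $a[K]$ is nonsingular.

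For ``$\subseteq$'', take a stress $Z$ for $X$ with $a$ generic in a component of $\Sigma(G)$ avoiding $\cKD(G)$. Since $a[K]$ is nonsingular, we decompose $Z=Z_1+Z_2$ with $\supp Z_i\subseteq G_i$. We do this by a Schur-complement argument on the rows indexed by $K$: the off-$K$ rows of $UZ=O$ split, and the $K$-part can be redistributed. One of the $Z_i$ is then nonzero and yields a degenerate completion of $p_{G,G_i}(a)$ after modifying $X$.

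Since maximal cliques and induced cycles of $G$ are exactly those of $G_1$ and $G_2$ (a clique separator meets no induced cycle of length $\ge4$ nontrivially), iterating this formula reduces Theorem~\ref{thm:main} to clique-sum atoms. The atoms of a cycle-completable graph are complete graphs and $K_4$-minor-free atoms.

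\textbf{Step 2: the atoms.} For complete $G$ we have $\S_G=\S^V$. A stress exists if and only if $X$ is singular, and $X=a$ is the only completion, so $\cD(G)=\cV(\det X)=\cKD(G)$.

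For $K_4$-minor-free atoms (2-connected series-parallel graphs without clique separators), we first handle cycles by Theorem~\ref{thm:cycle-iterated-resultant}. For a general atom, we use the ear/series-parallel structure: the atom is built from cycles by parallel compositions along pairs of vertices $\{s,t\}$ that are nonadjacent, since otherwise $\{s,t\}$ would be a clique separator. The target is
\[
\cR(G)\subseteq\bigcup_{C}p_{G,C}^{-1}\cR(C).
\]
Take $a$ generic in a component of $\Sigma(G)\setminus\cKD(G)$ with stress $Z$ and Gram representation $U$. We would induct on a 2-separation $\{s,t\}$, splitting $G=G_1\cup G_2$ with $G_1\cap G_2=\{s,t\}$ nonadjacent. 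Each $G_i+st$ is a smaller $K_4$-minor-free graph. Giving the virtual edge $st$ the value $X_{st}$, the restriction of $Z$ to each side, corrected by a virtual stress entry on $st$ that is determined by the equilibrium equations at $s$ and $t$, becomes a stress on $G_i+st$.

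If both corrections are nonzero, then the virtual value $X_{st}$ is constrained on both sides. In that case $a$ lies in the elimination of $x_{st}$ from two degeneracy conditions, one from each side. By induction these conditions are clique or cycle factors of $G_i+st$ involving the virtual edge $st$. Since $\{s,t\}$ is a triangle side only through a cycle, their resultant in $x_{st}$ is exactly a cycle resultant of a cycle through $s,t$. This matches the recursive definition of $\operatorname{ItrRes}$, and Theorem~\ref{thm:cycle-iterated-resultant} identifies the result as $\operatorname{ItrRes}(C)$ for some induced cycle $C$ of $G$. If instead one correction vanishes, then the stress lives on one side, and the induction applies directly with $X_{st}$ free.

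\textbf{Step 3: reverse inclusion and the defining polynomial.} The inclusion $p_{G,C}^{-1}\cR(C)\subseteq\cD(G)$ follows from the same extension argument as in Step 1 ``$\supseteq$'': a stress on the induced cycle $C$ extends by zero to $G$, with free entries chosen using the nonedges. For the defining polynomial, each $\det X[L]$ is irreducible, and each $\operatorname{ItrRes}(C)$ is irreducible by Theorem~\ref{thm:cycle-iterated-resultant}. These factors are pairwise non-associate because they involve different variable sets: distinct maximal cliques, and distinct cycles, are distinguished by their supports. Hence their product is square-free and defines $\cD(G)$.

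\textbf{Main obstacle.} The hard part is the parallel-composition step in Step 2. We have an arbitrary complex stress with no structural description (it may fail already for $K_{2,3}$), and we must extract from it a genuine two-sided elimination condition. Genericity must ensure that the virtual stress value is nonzero and that the virtual completion value is well defined. If this fails, I would first try working with dimension counts on the incidence variety $\{(X,Z)\}$: show that every component of $\cR(G)$ is a hypersurface whose generic stress has support containing some induced cycle, then identify it by irreducibility.
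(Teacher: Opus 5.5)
\textbf{Gap 1: the clique-sum formula in Step~1 is false.} Your formula $\cD(G)=p_{G,G_1}^{-1}\cD(G_1)\cup p_{G,G_2}^{-1}\cD(G_2)$ is explicitly refuted in the paper, by the remark after Theorem~\ref{thm:CS}.

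Take $C_5^+=C^{123}+_{\{1,3\}}C^{1345}$. The edge $\{1,3\}$ is a maximal clique of $C^{1345}$, so $\cV(\det X[\{1,3\}])\subseteq\cD(C^{1345})$. Yet there is an $a$ with $\det a[\{1,3\}]=0$, $\det a[\{1,2,3\}]=-1$, all other edge determinants nonzero and cycle resultant $1$. This $a$ lies outside $\cD(C_5^+)$.

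Your ``$\supseteq$'' argument breaks exactly here. On $p_{G,G_1}^{-1}\cV(\det X[K])$, with $K$ maximal in $G_1$ but not in $G$, you cannot reduce to $a[K]$ nonsingular. For the same reason, your claim that the maximal cliques of $G$ are exactly those of $G_1$ and $G_2$ is wrong. Iterating your formula would therefore put spurious factors $\det X[K]$ for separating cliques into the defining polynomial.

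The correct statement is Theorem~\ref{thm:CS}: $\cD(G)=\cKD(G)\cup p_{G,G_1}^{-1}\cR(G_1)\cup p_{G,G_2}^{-1}\cR(G_2)$. Its hard direction needs an idea your sketch lacks. Your ``$a[K]$ nonsingular'' in the ``$\subseteq$'' direction is also unjustified when $K$ is not maximal in $G$. The paper proceeds as follows for $a\notin\cKD(G)$:
\begin{enumerate}
\item It shows $\ker Z_{BB}\subseteq\ker Z_{KB}$. This yields a nonzero one-sided stress $Z_\ell$ with $X[V(G),V(G_\ell)]Z_\ell=O$.
\item If $\det a[K]=0$ (take $\ell=1$), it perturbs $X_1$ to $X_1+\epsilon\,X[V(G_1),J]\,T\,X[J,V(G_1)]$. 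Here $J=K'\setminus K$ for a maximal clique $K'\supsetneq K$ of $G$, and $T=-(X[J]+\lambda I)^{-1}$.
\item This perturbation keeps $Z_1$ a stress, while a Schur-complement computation makes $\det X_1(\epsilon)[K]\neq0$ (Lemma~\ref{lem:local-stress-descent}).
\end{enumerate}
That is what places $p_{G,G_\ell}(a)$ in $\cR(G_\ell)$ rather than in the clique part of $\cD(G_\ell)$. Iterating the formula then also needs Proposition~\ref{prop:14}.

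\textbf{Gap 2: the virtual-edge induction in Step~2 fails in the case that matters.} Suppose the separating pair collapses, i.e.\ $u_s,u_t$ are linearly dependent. Diagonal corrections and a virtual entry on $st$ only add multiples of $u_s$. But the defect of the restricted stress in columns $s,t$ need not lie in $\C u_s$, so no stress on $G_i+st$ results. Even if both sides did constrain $x_{st}$, both constraints would be $\det X[\{s,t\}]=0$. Its resultant with itself vanishes, so elimination gives no condition on $a$.

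You cannot dismiss this by taking $a$ generic in a component, since that presupposes knowing the components. In fact Corollary~\ref{cor:5} shows that every vertex-minimal full-support stress on a $K_4$-minor-free atom forces all P-node poles to collapse. The $K_{2,3}$ example in Subsection~\ref{subsec:global-degeneracy-K23} is an instance: there $u_1=u_2$, and restricting $Z$ to the path $1$--$3$--$2$ leaves defect $u_3\notin\C u_1$ in column $1$.

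The paper turns collapse into the key tool:
\begin{enumerate}
\item Non-collapse yields a stress of strictly smaller vertex support (Lemma~\ref{lem:2}).
\item Non-full support is handled by induction on induced subgraphs, using triangle-freeness of atoms to control $\cKD$.
\item In the full-support case, take a P-node with two leaf S-nodes; their real paths form a chordless cycle $C$.
\item Equilibrium at the degree-two vertices puts each path's Gram vectors in a plane $L_\ell$.
\item Since $u_q\in\C u_p$, a Witt isometry $L_2\to L_1$ fixing $u_p$ also fixes $u_q$. This gives a rank-two completion of $p_{G,C}(a)$, hence $p_{G,C}(a)\in\cR(C)$ by Proposition~\ref{prop:cycle-rank2} (Lemma~\ref{prop:9}).
\end{enumerate}
Finally, your elimination step would also need $\Res_{x_{st}}(\rho_{C_1},\rho_{C_2})\doteq\rho_C$ for two cycles glued along $st$. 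Theorem~\ref{thm:cycle-resultant-recursion} supplies this only when one of the two cycles is a triangle.
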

Thus the defining polynomials of the irreducible components are 
determined by maximal cliques and chordless cycles.
The polynomial in Theorem~\ref{thm:main} agrees, up to a nonzero scalar, with the defining polynomial of the algebraic boundary of the positive semidefinite completion cone obtained by Laurent, Mascarin, and Telen~\cite[Corollary~4.11]{LaurentMascarinTelen}.
We discuss this connection in Section~\ref{sec:elliptopes}.

\begin{remark}
The definition of the MC discriminant also gives a decomposition by completion rank:
$
\cD(G)=\bigcup_{r=0}^{|V(G)|}\cD_r(G)$,
where $\cD_r(G)$ is the Zariski closure of the locus of partial matrices admitting a degenerate completion of rank $r$.
The individual loci $\cD_r(G)$ need not be hypersurfaces, although Theorem~\ref{thm:main} shows that their union is a hypersurface when $G$ is cycle-completable.
Moreover, a single partial matrix may admit degenerate completions of different ranks.
The decomposition by completion rank therefore need not coincide with the decomposition into irreducible components given in Theorem~\ref{thm:main}.
\end{remark}

\subsection{Outline of the proof}
\label{subsec:proof-outline}
Before turning to the proof, Section~\ref{sec:elliptopes} discusses the relation between Theorem~\ref{thm:main} and algebraic boundaries of positive semidefinite completion cones, with particular attention to  issues arising in the  setting without positive semidefiniteness.
Sections~\ref{sec:clique}--\ref{sec:K4-atoms} develop the proof of Theorem~\ref{thm:main}.

A cycle-completable graph was originally defined as a graph whose PSD completability can be determined by checking PSD completability on maximal cliques and chordless cycles.
In Subsection~\ref{subsec:def}, we have defined cycle-completable graphs using  the structural theorem of Johnson and McKee~\cite{JohnsonMcKee} (see also~\cite[Theorem~1.1]{ChudnovskyMcInnis}): 
every cycle-completable graph can be obtained by iterated clique-sums of complete graphs and $K_4$-minor-free graphs.
Accordingly, the two main tasks in the proof of the main theorem are to show that the clique--cycle decomposition is preserved under clique-sums and to establish it for $K_4$-minor-free graphs.

Section~\ref{sec:clique} addresses the first task by deriving a clique-sum formula for MC discriminants of arbitrary graphs.
As a direct application, we determine the MC discriminant of a chordal graph.
For the second task, Section~\ref{sec:cycles} first treats cycles and proves Theorem~\ref{thm:cycle-iterated-resultant}.
Section~\ref{sec:K4-atoms} first shows that the essential discriminant of each chordless cycle lifts to the MC discriminant of the whole $K_4$-minor-free graph.
It then analyzes stresses on a $K_4$-minor-free clique-sum atom to construct a rank-two completion on a suitable induced cycle.
Combining this analysis with the clique-sum formula, Subsection~\ref{sec:K4} first proves the clique--cycle decomposition for all $K_4$-minor-free graphs by induction on the number of vertices, and then extends it to cycle-completable graphs to complete the proof of Theorem~\ref{thm:main}.

\section{MC discriminants and algebraic boundaries}\label{sec:elliptopes}

We compare the MC discriminant with the algebraic boundary of the PSD completion cone.
Our main theorem (Theorem~\ref{thm:main}) implies that the two varieties coincide for cycle-completable graphs.
On the other hand, the examples of $K_{3,3}$ and $W_5$ show that this equality neither holds in general nor characterizes cycle-completable graphs.
%

We write $A_G(\R)$ and $\S_G(\R)$ for the real points of $A_G$ and $\S_G$, respectively.

\subsection{PSD completion cones and stresses}
\label{subsec:psd-boundary-stresses}

For a graph $G=(V,E)$, the \emph{PSD completion cone} is 
$
\mathscr C(G):=p_G(\S_+^V)\subseteq A_G(\R).
$
Laurent, Mascarin, and Telen~\cite{LaurentMascarinTelen} define its \emph{algebraic boundary} by
\[
\partial_a\mathscr C(G)
:=\overline{\partial\mathscr C(G)}^{\mathrm{Zar}}\subseteq A_G,
\]
where $\partial\mathscr C(G)$ is the boundary of $\mathscr C(G)$ in $A_G(\R)$ and the Zariski closure is taken over $\C$.
Since matrices in $\S_G(\R)$ have zeros in the positions corresponding to nonedges, the trace pairing induces a nondegenerate pairing between $A_G(\R)$ and $\S_G(\R)$, identifying the latter with the dual of the former.
By self-duality of the PSD cone with respect to the trace pairing, we have
 \begin{equation}
 \mathscr C(G)^*=\S_G(\R)\cap\S_+^V.
 \label{eq:psd-completion-dual}
 \end{equation}
Hence, the separation theorem leads to the following characterization of the boundary of the PSD completion cone in terms of PSD stresses:
\begin{equation}
\partial\mathscr C(G)
=\{p_G(X):X\succeq0,\ \exists\,Z\in\S_G(\R)\setminus\{O\},\ Z\succeq0,\ XZ=O\},
\label{eq:boundary-psd-stress}
\end{equation}
see, e.g.,~\cite{LaurentMascarinTelen} for more details.
Comparing this characterization with~\eqref{eq:Z} and taking Zariski closures yields
\begin{equation}
\partial\mathscr C(G)\subseteq\Sigma(G),
\quad
\partial_a\mathscr C(G)\subseteq\cD(G).
\label{eq:boundary-discriminant-inclusion}
\end{equation}
Now, the difference between the two underlying loci is explicit: 
the algebraic boundary requires both $X$ and $Z$ to be real PSD matrices, 
whereas $\Sigma(G)$ allows arbitrary complex completions and nonzero complex stresses.

\begin{remark}
The set $\Sigma(G)$ is the projection of 
$\{(X,Z)\in\S^V\times\S_G:XZ=O,\ Z\ne O\}$,
and hence is constructible by Chevalley's theorem.
Since the complex Euclidean and Zariski closures of a constructible set agree~\cite[Chapter~I, Section~10]{MumfordRedBook}, we have
$\cD(G)=\overline{\Sigma(G)}^{\mathrm{Euc}}$.
In contrast, $\partial\mathscr C(G)$ is a Euclidean closed semialgebraic cone,
and its Zariski closure can contain points in the interior of $\mathscr C(G)$ as noted in \cite{LaurentMascarinTelen}.
\end{remark}

\subsection{Comparing two varieties}
Laurent, Mascarin, and Telen~\cite{LaurentMascarinTelen} have shown\footnote{This formula combines
\cite[Corollary~4.11]{LaurentMascarinTelen}
with the resultant formula~\cite[Theorem~3.12]{LaurentMascarinTelen}
and homogenization~\cite[Proposition~3.17 and Definition~3.19]{LaurentMascarinTelen}.
These identify the homogeneous cycle polynomial factors
appearing in their algebraic boundary formula with
$\operatorname{ItrRes}(C)$, up to nonzero constants.}
that, for a cycle-completable graph $G$,
\begin{equation}
\partial_a\mathscr C(G)
=\cV_{A_G}\!\left(
\prod_{L\in\operatorname{MaxCliq}(G)}\det X[L]
\prod_{C\in\operatorname{IndCyc}(G)}\operatorname{ItrRes}(C)
\right).
\label{eq:psd-boundary-product}
\end{equation}
Combining this formula with our main theorem (Theorem~\ref{thm:main}) gives the following consequence.

\begin{cor}
\label{cor:cycle-completable-boundary}
If $G$ is cycle completable, then
$
\cD(G)=\partial_a\mathscr C(G).
$
\end{cor}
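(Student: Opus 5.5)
The corollary will follow by comparing two descriptions of the same polynomial. Let $G$ be cycle completable, and set
\[
F_G:=\prod_{L\in\operatorname{MaxCliq}(G)}\det X[L]\prod_{C\in\operatorname{IndCyc}(G)}\operatorname{ItrRes}(C)\in R_G .
\]
Theorem~\ref{thm:main} states that $F_G$ is, up to a nonzero scalar, the defining polynomial of $\cD(G)$. Hence $\cD(G)=\cV_{A_G}(F_G)$. On the other side, the Laurent--Mascarin--Telen formula~\eqref{eq:psd-boundary-product} states that $\partial_a\mathscr C(G)=\cV_{A_G}(F_G)$ for the same class of graphs. Both sets are therefore the zero locus of the same polynomial, so they are equal.

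The only point needing care is that the factors in~\eqref{eq:psd-boundary-product} are literally our $\operatorname{ItrRes}(C)$ up to nonzero constants. Their homogeneous cycle polynomials come from a resultant formula and a homogenization, and the footnote identifies these with our iterated resultants. Theorem~\ref{thm:cycle-iterated-resultant} guarantees that $\operatorname{ItrRes}(C)$ is independent of the triangulation and the ear-removal order up to scalar. So whatever triangulation is implicit in their formula, the factor agrees with ours up to scalar, and scalars do not change zero sets. I would state this identification explicitly, citing \cite[Theorem~3.12, Proposition~3.17, Corollary~4.11]{LaurentMascarinTelen}, and then conclude.

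As a consistency check, the inclusion $\partial_a\mathscr C(G)\subseteq\cD(G)$ from~\eqref{eq:boundary-discriminant-inclusion} holds for every graph, so only the reverse inclusion uses the two formulas. Equality of the defining polynomials, rather than mere containment, is what rules out extra components on either side. The main obstacle is therefore not in this deduction but lies entirely in Theorem~\ref{thm:main} and in the cited external result.
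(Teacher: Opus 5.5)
Your proposal is correct and matches the paper's argument: both $\cD(G)$ (by Theorem~\ref{thm:main}) and $\partial_a\mathscr C(G)$ (by the Laurent--Mascarin--Telen formula~\eqref{eq:psd-boundary-product}) are the zero set of the same product of clique determinants and iterated resultants. Your remark that Theorem~\ref{thm:cycle-iterated-resultant} makes the choice of triangulation irrelevant up to a scalar is the right way to make the identification of the cycle factors precise.
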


Corollary~\ref{cor:cycle-completable-boundary} follows by comparing defining polynomials 
which are obtained by independent arguments. 
This agreement naturally raises the questions of whether there is a simple, more general explanation for the coincidence of the two varieties, and how far this equality extends. 
The following proposition shows that the equality fails in general.
\begin{prop}
\label{prop:K33-MC-vs-boundary}
Let $G=K_{3,3}$ have bipartition $L=\{1,2,3\}$ and $R=\{4,5,6\}$, and let $B=X[L,R]$ be the $3\times3$ block of 
the symmetric indeterminate matrix $X$.
Then
\[
\cV_{A_G}(\det B)\subseteq\cD(G),
\quad
\cV_{A_G}(\det B)\not\subseteq\partial_a\mathscr C(G).
\]
In particular,
$\partial_a\mathscr C(K_{3,3})\subsetneq\cD(K_{3,3})$.
\end{prop}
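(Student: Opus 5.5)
The plan has two halves: an explicit stress construction for the inclusion $\cV_{A_G}(\det B)\subseteq\cD(G)$, and a supporting-hyperplane argument for the non-inclusion.

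\textbf{Inclusion.} I will construct, for a Zariski dense subset of the irreducible hypersurface $\cV_{A_G}(\det B)$, a completion together with a nonzero stress. Here $\det B$ is the determinant of a generic $3\times 3$ matrix of independent indeterminates, hence irreducible. Write a completion of $a$ in block form $X=\left(\begin{smallmatrix}P&B\\B^T&Q\end{smallmatrix}\right)$. In $P$ and $Q$ the diagonals are prescribed and the off-diagonal entries are free, and $B=a[L,R]$ is fully prescribed. Look for a stress of the form $Z=\left(\begin{smallmatrix}0&W\\W^T&0\end{smallmatrix}\right)$, which lies in $\S_G$ because its support is on the edges $L\times R$. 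Then
\[
XZ=\begin{pmatrix}BW^T&PW\\QW^T&B^TW\end{pmatrix}.
\]
Let $\det B=0$, and choose $\xi\ne0$ with $B\xi=0$ and $\eta\ne0$ with $\eta^TB=0$. Put $W=\eta\xi^T$. The two blocks $BW^T=B\xi\,\eta^T$ and $B^TW=B^T\eta\,\xi^T$ vanish automatically. It remains to satisfy $P\eta=0$ and $Q\xi=0$.

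The condition $P\eta=0$ is a linear system in the three free unknowns $p_{12},p_{13},p_{23}$. Its coefficient matrix is
\[
\begin{pmatrix}\eta_2&\eta_3&0\\ \eta_1&0&\eta_3\\ 0&\eta_1&\eta_2\end{pmatrix},
\]
with determinant $-2\eta_1\eta_2\eta_3$. Hence the system is solvable whenever all entries of $\eta$ are nonzero, and similarly for $Q\xi=0$ when all entries of $\xi$ are nonzero. The set of $a$ with $\rank B=2$ and all entries of the kernel vectors nonzero is a nonempty Zariski open subset of $\cV_{A_G}(\det B)$; a single explicit example shows it is nonempty. This subset therefore lies in $\Sigma(G)$, and taking closures gives the inclusion.

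\textbf{Non-inclusion.} Suppose $\cV(\det B)\subseteq\partial_a\mathscr C(G)$. Since it is an irreducible hypersurface, it is then an irreducible component of $\partial_a\mathscr C(G)$. A component of the Zariski closure of the semialgebraic set $\partial\mathscr C(G)$ contains a Zariski dense set of points of $\partial\mathscr C(G)$, and locally these points have full dimension $\dim A_G-1$ inside that component. So I can pick a real point $a\in\partial\mathscr C(G)$ with the following properties:
\begin{itemize}
\item $a$ is a smooth point of $\cV(\det B)$, with $\nabla\det B(a)\neq0$ and $\rank B=2$;
\item $a$ avoids the other components of $\partial_a\mathscr C(G)$;
\item near $a$, the set $\partial\mathscr C(G)$ coincides with the real hypersurface $\cV(\det B)$.
\end{itemize}

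Because $\mathscr C(G)$ is a full-dimensional convex cone, its boundary near $a$ is a smooth hypersurface, so it has a unique supporting hyperplane at $a$. That hyperplane is the tangent hyperplane, with normal $\nabla\det B(a)$, viewed in $\S_G(\R)$ via the trace pairing. On the other hand, by \eqref{eq:psd-completion-dual} every supporting normal lies in $\S_G(\R)\cap\S^V_+$. Hence $\pm\nabla\det B(a)$ is a nonzero PSD matrix.

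But $\det B$ involves no diagonal variables. Its gradient is therefore the matrix $\left(\begin{smallmatrix}0&\tfrac12\operatorname{adj}(B)^T\\ \tfrac12\operatorname{adj}(B)&0\end{smallmatrix}\right)$, whose diagonal is zero. A PSD matrix with zero diagonal is zero, which contradicts $\nabla\det B(a)\ne0$. This gives the non-inclusion, and the strict inclusion $\partial_a\mathscr C(K_{3,3})\subsetneq\cD(K_{3,3})$ then follows from \eqref{eq:boundary-discriminant-inclusion}.

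\textbf{Main obstacle.} The delicate step is the real-geometric one: producing a point $a$ where the boundary of the convex cone locally equals the smooth real part of $\cV(\det B)$, so that the supporting normal must equal the gradient. I would argue it by dimension. $\partial\mathscr C(G)\cap\cV(\det B)$ must have local dimension $\dim A_G-1$ somewhere; otherwise its Zariski closure would be a proper subvariety of the component. At a generic such point, a dimension count forces the boundary to coincide locally with the real hypersurface, and convexity then forces the unique supporting hyperplane to be the tangent hyperplane.
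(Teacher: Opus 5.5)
Your proposal is correct and follows the paper's proof. The inclusion uses the same rank-one stress built from the left and right kernel vectors of $B$, with the same $-2\eta_1\eta_2\eta_3$ solvability check for $P$ and $Q$. The non-inclusion rests on the same mechanism: the diagonal directions are tangent to $\cV(\det B)$, so a PSD supporting functional must have zero diagonal and hence vanish. The only difference is that you route this through uniqueness of the supporting hyperplane and the gradient of $\det B$, whereas the paper tests the supporting functional directly against $a\pm\delta_i$, which avoids having to establish smoothness of the boundary at $a$.
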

\begin{proof}
Let $H:=\cV_{A_G}(\det B)$. Then, $H$ is an irreducible hypersurface.
For $a\in H$, $B(a)$ denotes the corresponding $3\times3$ block obtained from 
$B$ by substitution. 
If $a$ is generic in $H$, then $B(a)$ has rank $2$
and a nonzero vector $\alpha$ (resp.~$\beta$) in the left (resp.~right) kernel
has all coordinates nonzero.
We construct a $3\times 3$ symmetric matrix $P=(p_{ij})$ 
 such that $p_{ii}=a_{ii}$ for $i=1,2,3$ and $P\alpha=0$.
Indeed, the condition is described as a linear system in 
its three unspecified off-diagonal entries $p_{ij}$ 
and the corresponding matrix 
$
\left(
\begin{smallmatrix}
\alpha_2&\alpha_3&0\\
\alpha_1&0&\alpha_3\\
0&\alpha_1&\alpha_2
\end{smallmatrix}
\right)$
has the determinant $-2\alpha_1\alpha_2\alpha_3\ne0$.
Thus, such a matrix $P$ exists.
Similarly, there is a $3\times 3$ symmetric matrix $Q=(q_{ij})$ 
such that $q_{ii}=a_{i+3,i+3}$ for $i=1,2,3$ and $Q\beta=0$.
The completion and stress\par
{\scriptsize%
\[
X=\begin{pmatrix}P&B(a)\\B(a)^T&Q\end{pmatrix},
\quad
Z=\begin{pmatrix}O&\alpha\beta^T\\\beta\alpha^T&O\end{pmatrix}
\]}%
then satisfy $XZ=O$, with $Z\in\S_G\setminus\{O\}$.
Since $a$ is an arbitrary generic point in $H$, 
a dense open subset of $H$ lies in $\Sigma(G)$, and $H\subseteq\cD(G)$ follows.

Next, suppose that $H\subseteq\partial_a\mathscr C(G)$.
The algebraic boundary of a full-dimensional proper convex semialgebraic set is a hypersurface~\cite[Proposition~4.7]{LaurentMascarinTelen}, 
so $H$ would be one of the irreducible components of $\partial_a\mathscr C(G)$,
and hence the semialgebraic set $H\cap \partial\mathscr C(G)$ is Zariski dense in this irreducible component.
In particular, $H$ and $H\cap \partial\mathscr C(G)$ have the same dimension,
and accordingly there exists a nonempty relatively Euclidean open subset $O$ of $H(\R)$ contained in $H\cap \partial\mathscr C(G)$.

We choose any $a \in O$.
By~\eqref{eq:boundary-psd-stress}, 
there is a nonzero PSD stress $Z$ which is a functional supporting $\mathscr C(G)$ at $a$.
On the other hand, since $\det B$ is independent of the diagonal entries,
any small perturbation of the diagonal entries of $a$ remains in $O\subseteq \partial\mathscr C(G)$. 
For each $i\in V$, let $\delta_i\in A_G(\R)$ have a sufficiently small positive entry in the $i$th diagonal position and zeros elsewhere.
Then $a\pm\delta_i\in O\subseteq \partial\mathscr C(G)$, and the supporting property of $Z$ gives
$
0\le\langle a\pm\delta_i,Z\rangle
=\pm\langle\delta_i,Z\rangle$.
Thus $\langle\delta_i,Z\rangle=0$, so $Z_{ii}=0$ for every $i\in V$.
The positive semidefiniteness of $Z$ then implies $Z=O$. 
This is a contradiction.
\end{proof}

\begin{remark}
\label{rem:K33-Bolker-Roth-Dixon}
The construction in Proposition~\ref{prop:K33-MC-vs-boundary} is motivated by 
a classical fact in rigidity theory.
Consider a Gram representation $X=U^TU$ with $U$ of full row rank and write $U=(U_L,U_R)$.
Then by the above proof  $UZ=O$ for $Z$  implies 
$U_L\alpha=U_R\beta=0$, so the vector configuration in each part is projectively collinear.
The block $\alpha\beta^T$ is the homogeneous-coordinate version of the tensor-product stress 
constructed from affine dependencies in Bolker--Roth~\cite[Theorem~1]{BolkerRoth}.
This is the same tensor-product stress construction that occurs in 
Dixon I mechanisms of $K_{3,3}$, and it applies in arbitrary dimension.
\end{remark}


\subsection{Coincidence beyond cycle-completable graphs}
\label{subsec:non-cycle-completable-counterexamples}
The following example shows that the MC discriminant and the algebraic boundary can coincide even for graphs that are not cycle completable.
Let $W_5$ denote the wheel graph with rim vertices $1,2,3,4$ in cyclic order and hub vertex $0$.
This graph is a clique-sum atom and is not cycle completable. Consider the following completion and stress:\par
{\scriptsize %
\[
X_a
=
\begin{pmatrix}
1 & 0 & 0 & 1 & 0\\
0 & 1 & 0 & x_{13} & 1\\
0 & 0 & 1 & 1 & x_{24}\\
1 & x_{13} & 1 & 3 & 0\\
0 & 1 & x_{24} & 0 & 2
\end{pmatrix}
,\quad
 X=
 \begin{pmatrix}
  1&0&0&1&0\\
  0&1&0&1&1\\
  0&0&1&1&-1\\
  1&1&1&3&0\\
  0&1&-1&0&2
 \end{pmatrix},
 \quad
 Z=
 \begin{pmatrix}
 -1&0&-2&1&-1\\
 0&2&-2&0&-2\\
 -2&-2&-2&2&0\\
 1&0&2&-1&1\\
 -1&-2&0&1&1
 \end{pmatrix}.
\]}%
We have $XZ=O$ and hence
$a:=p_{W_5}(X)\in\Sigma(W_5)\subset\cD(W_5)$.
The maximal cliques of $W_5$ are the four triangles
$012$, $023$, $034$, and $041$, and
\[
\bigl(
\det X[012],\,
\det X[023],\,
\det X[034],\,
\det X[041]
\bigr)
=(1,1,4,1).
\]
Thus $a\notin\cKD(W_5)$. 

We next consider the contribution of chordless cycles. 
The only chordless cycle of length at least $4$ in $W_5$ is
$C=C^{1234}$. We triangulate $C$ using the diagonal $24$. Keeping the diagonal and cycle-edge entries fixed at their values in $a$ and regarding $x_{24}$ as an indeterminate, we obtain
$\det X[124]=1-x_{24}^2,\
\det X[234]=4-3x_{24}^2$,
and hence
$\operatorname{Res}_{x_{24}}
\bigl(1-x_{24}^2,\,4-3x_{24}^2\bigr)=1\neq0$.
Therefore
$
a\in
\cD(W_5)\setminus
\left(
\cKD(W_5)\cup p_{W_5,C}^{-1}\cR(C)
\right)$,
and consequently the clique--cycle decomposition fails.

Indeed, elimination with Macaulay2 shows that the irreducible components of $\cD(W_5)$ are precisely the four maximal-clique determinant hypersurfaces and the hypersurface defined by
$\operatorname{Res}_{x_{24}}\left(\det X[0124],\det X[0234]\right)$,
where $X$ again denotes the symmetric matrix of indeterminates.
The latter hypersurface does not occur in the clique--cycle decomposition above.
On the other hand, 
\cite[Example~4.15]{LaurentMascarinTelen}
 showed that 
the algebraic boundary $\partial_a \mathscr C(W_5)$ has precisely the same 
irreducible components as $\cD(W_5)$.
Thus, although the clique--cycle decomposition
fails for $W_5$, its MC discriminant coincides
with the algebraic boundary of the PSD completion cone.
It remains an open problem to characterize the graphs $G$ for which $\cD(G)=\partial_a\mathscr C(G)$.

\subsection{PSD stresses and degenerate completions}
\label{subsec:global-degeneracy-K23}


The original definition~\cite{BarrettJohnsonLoewy,JohnsonMcKee} of cycle-completable graphs is in terms of 
PSD completability by clique and cycle completability conditions,
which is, in our terminology, written as
\begin{equation}
\mathscr C(G)
=\bigcap_{L\in\operatorname{MaxCliq}(G)}
\{a\in A_G(\R):a[L]\succeq0\}
\cap\bigcap_{C\in\operatorname{IndCyc}(G)}
p_{G,C}^{-1}\bigl(\mathscr C(C)\bigr).
\label{eq:cycle-completable-psd-intersection}
\end{equation}
%
For a subgraph $H$ of $G$, let $\iota_H$ denote extension by zero from matrices indexed by $V(H)$ to matrices indexed by $V$.
Duality with respect to the trace pairing turns~\eqref{eq:cycle-completable-psd-intersection} into a decomposition of all sparse PSD matrices:
\begin{equation}
\mathscr C(G)^*
=\sum_{L\in\operatorname{MaxCliq}(G)}\iota_L(\S_+^L)
+\sum_{C\in\operatorname{IndCyc}(G)}\iota_C\bigl(\mathscr C(C)^*\bigr).
\label{eq:cycle-completable-psd-dual}
\end{equation}
%
Thus, for a fixed PSD completion $X$, every PSD $X$-stress is a sum of PSD $X$-stresses supported on maximal cliques and chordless cycles.
Consequently, every extreme ray of the PSD $X$-stress cone is supported on one of such subgraphs, cf.~\cite[Proof of Theorem 1.4]{SUY} for this duality argument.

On the other hand, the following example shows that this decomposition need not hold for non-PSD stresses. 
Consider the $K_4$-minor-free graph $K_{2,3}$ (Figure~\ref{fig:K23}) with bipartition $\{1,2\}$ and $\{3,4,5\}$, and let $a=p_{K_{2,3}}(X_a)$, where\par
{\scriptsize
\[
X_a=\begin{pmatrix}
3&\boldsymbol{3}&0&0&0\\
\boldsymbol{3}&3&0&0&0\\
0&0&2&\boldsymbol{-1}&\boldsymbol{-1}\\
0&0&\boldsymbol{-1}&2&\boldsymbol{-1}\\
0&0&\boldsymbol{-1}&\boldsymbol{-1}&2
\end{pmatrix},\qquad
Z=\begin{pmatrix}
0&0&1&1&1\\
0&0&-1&-1&-1\\
1&-1&0&0&0\\
1&-1&0&0&0\\
1&-1&0&0&0
\end{pmatrix}.
\]}
The nonedge entries of $X_a$ are shown in bold.
A real Gram representation of $X_a$ is\par
{\scriptsize%
\[
X_a=U^TU,\qquad
U=\begin{pmatrix}
1&1&1&-1&0\\
-1&-1&1&0&-1\\
1&1&0&1&-1
\end{pmatrix},
\]}%
and hence  $X_a\succeq0$ has rank $3$.
A direct computation shows $X_a Z=0$, and hence $Z$ is a nonzero $X_a$-stress.
Nevertheless, $X_a$ admits no nonzero stress supported on a maximal clique or an induced cycle.
Each maximal clique is an edge, and its principal submatrix is $\left(\begin{smallmatrix} 
3 & 0 \\ 0 & 2
\end{smallmatrix}\right)
\succ0$.
For each induced $4$-cycle $C$, the matrix $X_a[V(C)]$ has rank three
and does not admit a nonzero stress (cf.~Proposition~\ref{prop:cycle-rank2}).
Thus, $Z$ cannot be written as a linear combination of $X_a$-stresses supported on maximal cliques or chordless cycles.

\begin{remark}
The completion for $K_{2,3}$ given above is constructed based on the fact that 
the planar bar-joint realization of $K_{2,3}$ shown in Figure~\ref{fig:K23} 
has a nonzero equilibrium stress.
\end{remark}
\begin{figure}[htbp]
\centering
\begin{tikzpicture}[
  vertex/.style = {circle, draw, fill=white, inner sep=1.5pt},
  point/.style  = {circle, draw, fill=black, inner sep=1.5pt}
  ]

\node[vertex] (v5) at ( 1, 1) {5};
\node[vertex] (v2) at ( 1,-1) {2};
\node[vertex] (v3) at (-1,-1) {3};
\node[vertex] (v1) at (-1, 1) {1};
\node[vertex] (v4) at ( 0, 0) {4};

\draw (v1) -- (v3) -- (v2) -- (v5) -- (v1);
\draw (v4) -- (v1);
\draw (v4) -- (v2);

\begin{scope}[shift={(3.8,-0.2)}]

\node[point,
      label={[font=\scriptsize]below:$1,2$}] (u12) at (0,0) {};

\node[point,
      label={[font=\scriptsize]above:$3$}] (u3) at (0,1.2) {};

\node[point,
      label={[font=\scriptsize]below left:$4$}] (u4) at (-1.1,-0.8) {};

\node[point,
      label={[font=\scriptsize]below right:$5$}] (u5) at (1.1,-0.8) {};

\draw[gray] (u12) -- (u3);
\draw[gray] (u12) -- (u4);
\draw[gray] (u12) -- (u5);

\end{scope}

\end{tikzpicture}

\caption{
The graph $K_{2,3}$ (left) and a planar bar-joint framework of $K_{2,3}$ that has a nonzero equilibrium stress (right).
}

\label{fig:K23}
\end{figure}

\section{Clique-sum}
\label{sec:clique}

In this section, we consider \textit{arbitrary simple graphs}. 

\subsection{Clique-sum formula}
If a given graph is a clique-sum of two graphs, then we have the following
 concise formula.

\begin{thm}[Clique-sum formula]
\label{thm:CS}
For a nontrivial clique-sum $G=G_1+_K G_2$,
\[
\cD(G)=\cKD(G)
\cup p_{G,G_1}^{-1}\cR(G_1)
\cup p_{G,G_2}^{-1}\cR(G_2).
\]
\end{thm}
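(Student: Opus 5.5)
The plan is to prove the two inclusions separately, after a combinatorial preliminary on maximal cliques. Write $V_i:=V(G_i)$ and $a_i:=p_{G,G_i}(a)$. Since $K$ separates $V_1\setminus K$ from $V_2\setminus K$, every clique of $G$ lies in $G_1$ or in $G_2$. Hence $\operatorname{MaxCliq}(G)\subseteq\operatorname{MaxCliq}(G_1)\cup\operatorname{MaxCliq}(G_2)$, and the only possible extra member on the right is $K$ itself, when $K$ is maximal in some $G_i$ but not in $G$. Consequently
\[
p_{G,G_i}^{-1}\cKD(G_i)\subseteq\cKD(G)\cup\cV_{A_G}(\det X[K]),
\]
and $\cV_{A_G}(\det X[K])$ is contained in $\cV_{A_G}(\det X[L])$ for no maximal clique $L\supsetneq K$. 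This locus $\cV(\det X[K])$ is the place where care is needed in both directions.

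\emph{Inclusion ``$\supseteq$''.} By Proposition~\ref{prop:max-clique-component} we already have $\cKD(G)\subseteq\cD(G)$, so it suffices to show $p_{G,G_1}^{-1}\cR(G_1)\subseteq\cD(G)$. Take $a\in A_G$ such that $a_1\in\Sigma(G_1)\setminus\cKD(G_1)$ is generic and $\det a[K]\neq0$. Pick a completion $X_1$ of $a_1$ and a stress $Z_1\in\S_{G_1}\setminus\{O\}$ with $X_1Z_1=O$. By Lemma~\ref{lem:1}, write $X_1=U_1^TU_1$ with $U_1Z_1=O$. We extend $X_1$ to a completion $X$ of $a$ with $X\,\iota_{G_1}(Z_1)=O$.

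For each $i\in V_2\setminus K$ we need the row $X[i,V_1]$ to lie in the row space of $U_1$, that is $X[i,V_1]=w_i^TU_1$, subject to the prescribed values $w_i^Tu_k=a_{ik}$ for $k\in K$. Since $\det a[K]\neq0$, the vectors $u_k$ ($k\in K$) are linearly independent, so such $w_i$ exist. The entries of $X$ indexed by $V_2\setminus K$ are filled with $a$ on edges of $G_2$ and arbitrarily elsewhere; they do not enter the stress equations. Then $X\,\iota(Z_1)=O$ holds, so $a\in\Sigma(G)$.

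It remains to see that such $a$ are dense in $p_{G,G_1}^{-1}\cR(G_1)$. This amounts to showing that no irreducible component of $\cR(G_1)$ lies in $\cV(\det X[K])$. I expect this to be a genuine obstacle. The first attempt is a perturbation argument: starting from a pair $(U_1,Z_1)$ with $U_1[K]$ rank-deficient, deform $U_1$ along the variety $\{(U,Z):UZ=O\}$ so as to make $U_1[K]$ independent while staying outside $\cKD(G_1)$. If this fails, the fallback is to show directly that points of $\Sigma(G_1)$ with $\det a[K]=0$ lie in $\cD(G)$ by a separate construction.

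\emph{Inclusion ``$\subseteq$''.} Let $a\in\Sigma(G)\setminus\cKD(G)$ be generic, with $XZ=O$, $Z\neq O$, and Gram vectors $u_v$, so that $UZ=O$. If $Z$ vanishes on all rows indexed by $V_2\setminus K$, then $Z[V_1]\in\S_{G_1}$ is a nonzero $X[V_1]$-stress, so $a_1\in\Sigma(G_1)$. Using the preliminary, and $\det a[K]\neq0$ for generic $a$, we get $a_1\notin\cKD(G_1)$, and hence $a\in p_{G,G_1}^{-1}\cR(G_1)$.

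In general $Z$ has support on both sides, and the main step is to split it. For $k\in K$, the equilibrium equation reads $\sum_{j\in V_1}Z_{kj}u_j+\sum_{j\in V_2\setminus K}Z_{kj}u_j=0$. Set $y_k:=\sum_{j\in V_2\setminus K}Z_{kj}u_j$. The goal is to show that, after replacing $X$ by a suitable completion with the same $a$ (choosing the free $V_1\setminus K$ versus $V_2\setminus K$ entries generically, so that $\Span\{u_j:j\in V_1\}\cap\Span\{u_j:j\in V_2\}=\Span\{u_k:k\in K\}$), each $y_k$ lies in $\Span\{u_k:k\in K\}$. Then $y_k=\sum_{l\in K}c_{kl}u_l$ with $c$ symmetric (checked through the Gram pairing with the independent $u_k$). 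This lets us move the term $c$ into the $K$-block and write $Z=\iota_{G_1}(Z_1)+\iota_{G_2}(Z_2)$ with $U_iZ_i=O$, where at least one $Z_i$ is nonzero. That reduces to the one-sided case above.

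I expect this splitting, together with the justification that the completion may be re-chosen while keeping a stress, to be the main difficulty. If re-choosing the completion is problematic, the intended approach is a dimension count on the incidence variety $\{(X,Z)\}$ restricted to the two sides.
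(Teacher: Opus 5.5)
Your proposal has genuine gaps in both directions. The difficulties you flag come from one unnecessary restriction and one invalid step.

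\textbf{The restriction $\det a[K]\neq0$.} Both directions restrict to such points, and neither resulting density claim is proved.

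In the ``$\supseteq$'' direction the restriction is not needed. Your construction only uses that $\{u_k\}_{k\in K}$ is linearly independent, and this follows from $a_1\notin\cKD(G_1)$ alone. Indeed, a maximal clique $L$ of $G_1$ containing $K$ has $\det X_1[L]\neq0$, so $X_1[V(G_1),K]$ has full column rank. Hence your rows $w_i^TU_1$ exist for \emph{every} $a\in p_{G,G_1}^{-1}(\Sigma(G_1)\setminus\cKD(G_1))$. The inclusion then follows because $p_{G,G_1}^{-1}$ commutes with Zariski closure, which is exactly Proposition~\ref{prop:essential-extension}.

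In the ``$\subseteq$'' direction, points with $\det a[K]=0$ and $a\notin\cKD(G)$ must actually be handled when $K$ is maximal in some $G_i$ but not in $G$. Calling $a$ generic silently assumes that points of $\Sigma(G)\setminus\cKD(G)$ with $\det a[K]\neq0$ are dense in $\cR(G)$, and you do not justify this. The paper instead uses a deformation (Lemma~\ref{lem:local-stress-descent}). Pick a maximal clique $K\cup J$ of $G$, set $T=-(X[J]+\lambda I)^{-1}$, and let $X_1(\epsilon)=X_1+\epsilon X[V(G_1),J]\,T\,X[J,V(G_1)]$. The stress satisfies the stronger condition $X[V(G),V(G_1)]Z_1=O$, not merely $X[V(G_1)]Z_1=O$, so $X_1(\epsilon)Z_1=O$ for all $\epsilon$. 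A Schur complement gives $\det X_1(\epsilon)[K]\not\equiv0$. Thus $a_1$ is a limit of points of $\Sigma(G_1)\setminus\cKD(G_1)$, so $a_1\in\cR(G_1)$.

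\textbf{The splitting step.} This is the more serious gap. The stress $Z$ belongs to the original completion. Re-choosing the free $V_1\setminus K$ versus $V_2\setminus K$ entries to force $\Span\{u_j:j\in V_1\}\cap\Span\{u_j:j\in V_2\}=\Span\{u_k:k\in K\}$ generally destroys $XZ=O$. The fallback ``dimension count'' is not an argument.

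The missing observation is that nothing needs to be re-chosen. Put $B:=V_2\setminus K$. Every neighbor of a vertex of $B$ lies in $K\cup B$, so the equilibrium equations at $B$ read $U_KZ[K,B]+U_BZ[B]=O$. Since $a\notin\cKD(G)$ and $K$ lies in a maximal clique of $G$, the vectors $u_k$ ($k\in K$) are independent. Hence $Z[K,B]=\Lambda Z[B]$ for some matrix $\Lambda$; the paper states this as $\ker Z_{BB}\subseteq\ker Z_{KB}$.

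This already gives $y_k\in\Span\{u_l:l\in K\}$ for the original $X$. Moreover, $Z_2:=\left(\begin{smallmatrix}\Lambda\\ I\end{smallmatrix}\right)Z[B]\left(\begin{smallmatrix}\Lambda^T& I\end{smallmatrix}\right)\in\S_{G_2}$ satisfies $X[V(G),V(G_2)]Z_2=O$, and $Z_2\neq O$ whenever $Z[B]\neq O$. If $Z[B]=O$, then $Z[K,B]=O$ and $Z$ is supported on $G_1$ with $X[V(G),V(G_1)]Z=O$. Either way you reach the one-sided case, which must then be closed by the deformation above rather than by genericity.
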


Before turning to the proof, we make a brief remark.
For a clique-sum, one might expect the simpler formula
$\cD(G)=p_{G,G_1}^{-1}\cD(G_1)\cup p_{G,G_2}^{-1}\cD(G_2)$.
The following example shows that this formula fails in general and explains why the clique-degenerate terms must be treated separately.
\begin{remark}
Let $C_5^+$ be the graph obtained from $C_5$ by adding the chord $\{1,3\}$, 
with the vertices labeled cyclically as $1,2,3,4,5$.
Then we can write $C_5^+=C^{123}+_{\{1,3\}}C^{1345}$.
Recall $\cKD(G):=\bigcup_{L\in\operatorname{MaxCliq}(G)}\cV_{A_G}(\det X[L])$.
By elimination with Macaulay 2, 
we have
\begin{align*}
\cD(C_5^+) & = \cV\bigl(\det X[123]\cdot\prod_{e\in E(P^{1543})}\det X[e]
\cdot\Res_{x_{35}}(\det X[135],\det X[345])\bigr),
\intertext{where $P^{1543}$ is the path $1-5-4-3$. On the other hand,}
  \cD(C^{123}) & = \cV(\det X[123]),\\
\cD(C^{1345}) & = \cV\bigl(\prod_{e\in E(C^{1345})}\det X[e]\cdot
\Res_{x_{35}}(\det X[135],\det X[345])\bigr).
\end{align*}
Let $a\in A_{C_5^+}$ be the partial matrix whose completions have the form
{\scriptsize%
\[
X=
\begin{pmatrix}
1&0&1&x_{14}&0\\
0&2&1&x_{24}&x_{25}\\
1&1&1&0&x_{35}\\
x_{14}&x_{24}&0&1&0\\
0&x_{25}&x_{35}&0&1
\end{pmatrix}.
\]}%
Then $\det X[13]=0$, while $\det X[123]=-1$ and all the other edge factors are nonzero. Moreover,
$\Res_{x_{35}}
\bigl(\det X[135],\det X[345]\bigr)=1$.
Hence $a\notin\cD(C_5^+)$, but $p_{C_5^+,C^{1345}}(a)\in\cD(C^{1345})$.
Here, the edge $\{1,3\}$ is a maximal clique of $C^{1345}$ but is contained in the larger clique $\{1,2,3\}$ of $C_5^+$.
Its determinant is a factor of the defining polynomial of $\cD(C^{1345})$, but not of that of $\cD(C_5^+)$.
If $a[123]\succeq0$, then $\det a[13]=0$ would imply $\det a[123]=0$.
The example shows that this implication fails without positive semidefiniteness.
Thus, even for clique-sums, arguments for PSD completion that rely on this implication do not carry over directly to our setting.
\end{remark}

\subsection{Proof of the clique sum formula}
We first show the one-sided inclusion
$\cKD(G)
\cup p_{G,G_1}^{-1}\cR(G_1)
\cup p_{G,G_2}^{-1}\cR(G_2)\subset \cD(G)$.
By Proposition~\ref{prop:max-clique-component}, we already have
$\cKD(G)\subset\cD(G)$.
It remains to establish the corresponding inclusion for the essential
discriminants.
\begin{prop}
\label{prop:essential-extension}
Let $G=G_1+_K G_2$. Then
$p_{G,G_\ell}^{-1}\cR(G_\ell)\subset\cD(G)$ for $\ell=1,2$.
\end{prop}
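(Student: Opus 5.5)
The plan is to lift a degenerate completion of $G_\ell$, together with its stress, to a degenerate completion of $G$, keeping the stress supported on $V(G_\ell)$. By symmetry take $\ell=1$, and write $V_1=V(G_1)$ and $V_2=V(G_2)$. Since $\cD(G)$ is Zariski closed and $p_{G,G_1}$ is a coordinate projection, $p_{G,G_1}^{-1}$ commutes with Zariski closure. So $p_{G,G_1}^{-1}\cR(G_1)$ is the closure of $p_{G,G_1}^{-1}(\Sigma(G_1)\setminus\cKD(G_1))$. It therefore suffices to show that a Zariski dense subset of this preimage lies in $\Sigma(G)$.

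Fix $a\in A_G$ with $a_1:=p_{G,G_1}(a)\in\Sigma(G_1)\setminus\cKD(G_1)$. Choose a completion $X_1$ of $a_1$ and a stress $Z_1\in\S_{G_1}\setminus\{O\}$ with $X_1Z_1=O$, as provided by Proposition~\ref{prop:stress-transversality}. Let $Z:=\iota_{G_1}(Z_1)$ be the extension by zero; then $Z\in\S_G\setminus\{O\}$. I will build $X\in\S^V$ with $p_G(X)=a$, $X[V_1]=X_1$ and $XZ=O$. Because $Z$ is supported on $V_1$, the condition $XZ=O$ reduces to two requirements:
\begin{itemize}
\item $X_1Z_1=O$, which already holds;
\item $X[i,V_1]\,Z_1=0$ for every $i\in V_2\setminus K$.
\end{itemize}
The entries of $X$ inside $V_2\setminus K$ impose no constraint, so they are set to the values prescribed by $a$ on $G_2$, with arbitrary values in the free positions. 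For $i\in V_2\setminus K$, the entries $X_{ij}$ with $j\in V_1\setminus K$ are free, since $ij$ is a nonedge in a clique-sum. The entries $X_{ik}=a_{ik}$ with $k\in K$ are prescribed, forming a vector $b_i\in\C^K$. So the task is a linear one: find $y\in\ker Z_1$ (viewed as row vectors) with $y|_K=b_i$.

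The key observation is that $\ker Z_1$ contains every column of the symmetric matrix $X_1$, because $X_1Z_1=O$. Restricting the columns indexed by $K$ to the $K$-coordinates gives the columns of $a[K]$. Hence if $\det a[K]\neq0$, the projection $\ker Z_1\to\C^K$, $y\mapsto y|_K$, is surjective. Every $b_i$ is then attained, and the construction gives $a\in\Sigma(G)$ for all such $a$. This proves the claim on the open set $\{\det a[K]\ne0\}$ of the preimage, and hence on the closure of each irreducible component of $p_{G,G_1}^{-1}\cR(G_1)$ not contained in $\cV(\det X[K])$.

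The main obstacle is the remaining case: components of $\cR(G_1)$ that might lie entirely inside $\cV(\det X[K])$. There the argument above breaks down, because $\det a[K]=0$ does not by itself place $a$ in $\cKD(G)$ or in $\Sigma(G)$, as the $C_5^+$ remark shows. I would handle this in two steps.
\begin{enumerate}
\item Weaken the requirement to the actual one: surjectivity is not needed, only $b_i\in\bigl(\ker Z_1\bigr)|_K$. By duality, this holds if and only if $b_i$ is orthogonal to $\{w\in\C^K:\iota_K(w)\in\operatorname{Im}Z_1\}$, and this space is controlled by the stress rows indexed by $K$.
\item Exploit the freedom in the choice of $(X_1,Z_1)$ over the fiber, together with genericity of $a$ in the component, to show this orthogonality holds on a dense subset. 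Here I would use that the coordinates $a_{ik}$ for $i\in V_2\setminus K$, $k\in K$ are independent of $a_1$, and try to produce a pair $(X_1,Z_1)$ for which $\operatorname{Im}Z_1$ meets $\iota_K(\C^K)$ only in $0$.
\end{enumerate}
If step 2 fails, a fallback is to show directly that $\Sigma(G_1)\setminus\cKD(G_1)$ is never generically contained in $\cV(\det X[K])$. Then the first case alone suffices.
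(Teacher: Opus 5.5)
Your reduction is the right one, and it is exactly the lift the paper uses: keep $X[V(G_1)]=X_1$, extend $Z_1$ by zero, and for each $i\in V(G_2)\setminus K$ choose the row $X[i,V(G_1)]$ in the left kernel of $Z_1$ with the prescribed $K$-entries.

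The gap is that you prove the needed surjectivity of $y\mapsto y|_K$ on $\ker Z_1$ only under $\det a[K]\neq0$. The complementary case, components of $p_{G,G_1}^{-1}\cR(G_1)$ lying in $\cV(\det X[K])$, is left to a plan that is never carried out (step 2 and the fallback). As written, the proposition is therefore not proved.

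The worry behind that case is also misplaced. The $C_5^+$ example involves $K=\{1,3\}$, which is a \emph{maximal} clique of $C^{1345}$. There $\det a[K]=0$ already puts $p_{G,G_1}(a)$ in $\cKD(G_1)$, and your hypothesis excludes that.

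The missing idea is that you test surjectivity with too few vectors of $\ker Z_1$:
\begin{enumerate}
\item Choose a maximal clique $L$ of $G_1$ with $K\subseteq L$. Since $p_{G,G_1}(a)\notin\cKD(G_1)$, you have $\det a[L]\neq0$.
\item Every column of $X_1$ lies in $\ker Z_1$, because $Z_1X_1=(X_1Z_1)^T=O$. In particular this holds for the columns indexed by $L$.
\item Restricted to $K$, those columns form the matrix $X_1[K,L]=a[K,L]$. It has rank $|K|$, since its rows are rows of the invertible matrix $a[L]$.
\end{enumerate}
Hence $\ker Z_1\to\C^K$ is surjective for \emph{every} $a\in p_{G,G_1}^{-1}\bigl(\Sigma(G_1)\setminus\cKD(G_1)\bigr)$, and every such $a$ lies in $\Sigma(G)$. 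No genericity, component analysis, or duality argument is needed; the closure step then finishes as in your first paragraph.

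This is precisely the paper's proof. It phrases the key point as $X_1[V(G_1),K]$ having full column rank, and takes $X[i,V(G_1)]$ to be a linear combination of rows of $X_1$.
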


\begin{proof}
We prove the case $\ell=1$. By definition,
$\cR(G_1)=\overline{\Sigma(G_1)\setminus\cKD(G_1)}^{\mathrm{Zar}}$.
We first show that
$p_{G,G_1}^{-1}\bigl(\Sigma(G_1)\setminus\cKD(G_1)\bigr)\subset\Sigma(G)$.
Take $a\in p_{G,G_1}^{-1}(\Sigma(G_1)\setminus\cKD(G_1))$ and set $a_1:=p_{G,G_1}(a)$. Since $a_1\in\Sigma(G_1)$, there exist a completion $X_1$ and $Z_1\in\S_{G_1}\setminus\{O\}$ such that $X_1Z_1=O$.

If $K=\varnothing$, choose any completion $X_2$ of $p_{G,G_2}(a)$ and define
a completion $X$ of $a$ by
$X[V(G_1)]=X_1,\ X[V(G_2)]=X_2,\ X[V(G_1),V(G_2)]=O$.
Extend $Z_1$ by zero to a matrix $Z$ on $G$.
Then $Z\in\S_{G}\setminus\{O\}$ and $XZ=O$, so $a\in\Sigma(G)$.

Hence we assume that $K\neq\varnothing$.
Choose a maximal clique $L$ of $G_1$ containing $K$.
Since $a_1\notin\cKD(G_1)$, we have $\det X_1[L]\neq0$, and hence $X_1[V(G_1),K]$ has full column rank. 
Here, we will construct an appropriate completion $X$ of $a$ on $G$.
Set $X[V(G_1)] = X_1$.
For each $i\in V(G_2)\setminus K = V(G)\setminus V(G_1)$, there exist $c_{ij}\in\C$ such that the row vector
\[
v^i:=\sum_{j\in V(G_1)}c_{ij}X_1[j,K]\in\C^K
\]
satisfies $v^i_j=a_{ij}$ whenever $\{i,j\}$ is an edge between $i$ and $K$. Using these $c_{ij}$, define
\[
X[i,V(G_1)]:=\sum_{j\in V(G_1)}c_{ij}X_1[j,V(G_1)],\ \forall i \in V(G_2)\setminus K.
\]
Then $X_1Z_1=O$ implies $X[i,V(G_1)]Z_1=O$. 
Since there are no edges between $V(G_1)\setminus K$ and $V(G_2)\setminus K$, such a choice of $X[i,V(G_1)\setminus K]$ does not conflict with the prescribed data $a$. Define the remaining entries of $X$
so that $X[V(G_1),V(G_2)\setminus K] = X[V(G_2)\setminus K,V(G_1)]^T$ and 
the entries of $X[V(G_2),V(G_2)]$
agree with $a$, for example, setting non-edge entries zero.
This gives a completion $X$ of $a$ on $G$. 
Extend $Z_1$ by zero to a matrix $Z$ on $G$. Then $Z\in\S_{G}\setminus\{O\}$ and $XZ=O$, so $a\in\Sigma(G)$.
Thus
$p_{G,G_1}^{-1}(\Sigma(G_1)\setminus\cKD(G_1))\subset\Sigma(G)$. 

Since $p_{G,G_1}$ is a coordinate projection and $A_G$ can be regarded as the product of $A_{G_1}$ with the remaining coordinate space, we conclude
\begin{align*}
p_{G,G_1}^{-1}\left(\cR(G_1)\right)
=p_{G,G_1}^{-1}\left(\overline{\Sigma(G_1)\setminus\cKD(G_1)}^{\mathrm{Zar}}\right)
=\overline{p_{G,G_1}^{-1}(\Sigma(G_1)\setminus\cKD(G_1))}^{\mathrm{Zar}}
\subset\cD(G).
\end{align*}
\end{proof}

Next, we show the reverse inclusion. The main step is to prove
$
\cD(G) \setminus \cKD(G)
\subset
p_{G,G_1}^{-1}\cR(G_1)
\cup p_{G,G_2}^{-1}\cR(G_2)
$.
We use the Schur complement to construct a nonzero stress on a subgraph from a stress on the whole graph.
We first note the following combinatorial fact.

\begin{lemma}
\label{lem:local-maximal-clique}
Let $G=G_1+_K G_2$ with $K\neq\varnothing$. 
If $L$ is a maximal clique of $G_1$ that is not maximal in $G$, 
then $L=K$.
\end{lemma}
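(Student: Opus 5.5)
The argument is purely combinatorial. Let $L$ be a maximal clique of $G_1$ that is not maximal in $G$. Then there is a vertex $v\in V(G)\setminus L$ adjacent in $G$ to every vertex of $L$. Such a $v$ cannot lie in $V(G_1)$. Indeed, $G_1$ is an induced subgraph of $G$ in a clique-sum, since edges of $G$ between vertices of $V(G_1)$ lie in $G_1$; the only edges not in $G_1$ join vertices of $V(G_2)$, and those inside $K$ already belong to $G_1$ because $K$ is a clique of both. So $v\in V(G_1)$ would make $L\cup\{v\}$ a clique of $G_1$, contradicting maximality of $L$ in $G_1$. Hence $v\in V(G_2)\setminus K$.

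Next, since $G=G_1\cup G_2$ and $G_1\cap G_2=K$, there are no edges between $V(G_1)\setminus K$ and $V(G_2)\setminus K$. Because $v$ is adjacent to every vertex of $L\subseteq V(G_1)$, we get $L\subseteq K$.

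Finally, $K$ is a clique of $G_1$ containing $L$, so maximality of $L$ in $G_1$ forces $L=K$. The hypothesis $K\neq\varnothing$ is not really needed, except to make the statement meaningful; if $K=\varnothing$, then $L=\varnothing$ is impossible since maximal cliques are nonempty, so no such $L$ exists.

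The only point needing care is the first step: that a clique of $G$ lying inside $V(G_1)$ is a clique of $G_1$. This uses that every edge of $G$ with both ends in $V(G_1)$ is an edge of $G_1$, which follows from the definition of a clique-sum with $G_1\cap G_2=K$ a clique. Edges of $G_2$ with both ends in $V(G_1)$ have both ends in $K$, and those edges lie in $G_1$ as well.
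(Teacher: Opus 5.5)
Your proof is correct and follows essentially the same route as the paper, whose one-line proof rests on the fact that $K$ separates $V(G_1)\setminus K$ from $V(G_2)\setminus K$. You spell this out: $G_1$ is induced in $G$, so any extending vertex must lie in $V(G_2)\setminus K$, which forces $L\subseteq K$, and then $L=K$ by maximality.
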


\begin{proof}
This follows immediately since $K$ separates
$V(G_1)\setminus K$ from $V(G_2)\setminus K$.
\end{proof}

\begin{lemma}
\label{lem:local-stress-descent}
Let $G=G_1+_K G_2$ with $K\neq\varnothing$,
and let $\ell\in\{1,2\}$.
Suppose that $a\in A_G\setminus\cKD(G)$ has a completion $X$.
If there exists $Z_\ell\in\S_{G_\ell}\setminus\{O\}$ such that
\[
X[V(G),V(G_\ell)]Z_\ell=O,
\]
then $p_{G,G_\ell}(a)\in\cR(G_\ell)$.
\end{lemma}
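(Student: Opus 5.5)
The plan is to restrict the given completion and stress to $G_\ell$. If the restricted partial matrix already avoids $\cKD(G_\ell)$ we are done. Otherwise we perturb it inside $\Sigma(G_\ell)$ so that it leaves $\cKD(G_\ell)$, using the rows of $X$ indexed by vertices outside $G_\ell$.

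Set $V_\ell:=V(G_\ell)$, $X_\ell:=X[V_\ell]$ and $a_\ell:=p_{G,G_\ell}(a)$. Since the rows of $X_\ell$ are among the rows of $X[V(G),V_\ell]$, the hypothesis gives $X_\ell Z_\ell=O$ with $Z_\ell\in\S_{G_\ell}\setminus\{O\}$. Thus $a_\ell\in\Sigma(G_\ell)$.

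Now look at the maximal cliques $L'$ of $G_\ell$. By Lemma~\ref{lem:local-maximal-clique}, either $L'$ is maximal in $G$ or $L'=K$. In the first case $\det a[L']\neq0$ because $a\notin\cKD(G)$. So if $K$ is not a maximal clique of $G_\ell$, or if $\det a[K]\ne0$, then $a_\ell\in\Sigma(G_\ell)\setminus\cKD(G_\ell)\subset\cR(G_\ell)$.

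It remains to treat the case where $K$ is maximal in $G_\ell$, not maximal in $G$, and $\det a[K]=0$. Then there is a maximal clique $L\supsetneq K$ of $G$, necessarily with $S:=L\setminus K\subseteq V(G)\setminus V_\ell$, and $\det a[L]\neq0$.

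For a symmetric $M\in\S^S$ and $t\in\C$, define
\[
X_t:=X_\ell-t\,X[V_\ell,S]\,M\,X[S,V_\ell].
\]
The rows of $X$ indexed by $S$ satisfy $X[S,V_\ell]Z_\ell=O$ by hypothesis. Hence $X_tZ_\ell=O$ for every $t$, so $a_\ell(t):=p_{G_\ell}(X_t)\in\Sigma(G_\ell)$. Moreover $a_\ell(t)\to a_\ell$ as $t\to0$. For maximal cliques $L'\neq K$ of $G_\ell$, the condition $\det X_t[L']\neq0$ is open and holds at $t=0$, so it persists for small $t$.

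The main step is to choose $M$ so that $f(t):=\det\bigl(X[K]-tBMB^T\bigr)$ is not identically zero, where $B:=X[K,S]$.
\begin{itemize}
\item If $v\in\ker X[K]$ satisfies $B^Tv=0$, then the vector $(v,0)$ lies in $\ker X[L]$. This forces $v=0$, since $\det X[L]=\det a[L]\ne0$. So $B^T$ is injective on $N:=\ker X[K]$.
\item Choose $M$ so that the bilinear form $(u,w)\mapsto u^TMw$ is nondegenerate on the subspace $B^TN$. This is a generic condition; for instance, take a basis of $B^TN$, extend it to a basis of $\C^S$, and prescribe the Gram matrix of $M$ in that basis.
\item Expanding $f(t)$, the coefficient of $t^{\dim N}$ is, up to a nonzero factor (a product of nonzero eigenvalues of $X[K]$ in a suitable complementary decomposition), the determinant of the form $BMB^T$ restricted to $N$. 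This is nonzero by the choice of $M$, so $f\not\equiv0$.
\end{itemize}
Verifying this lowest-order-term expansion carefully over $\C$ is the step I expect to need the most care. Since $X[K]$ is complex symmetric and not necessarily diagonalizable, I would work in a basis adapted to $N$ and a complement, and use the Schur complement formula for the determinant.

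With such an $M$, for all sufficiently small $t\neq0$ we have $a_\ell(t)\in\Sigma(G_\ell)\setminus\cKD(G_\ell)$. Letting $t\to0$ gives $a_\ell\in\overline{\Sigma(G_\ell)\setminus\cKD(G_\ell)}^{\mathrm{Euc}}\subseteq\cR(G_\ell)$, which proves the lemma.
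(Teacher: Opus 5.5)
Your proof is correct, and its skeleton is the paper's. You restrict to $G_\ell$, use Lemma~\ref{lem:local-maximal-clique} to reduce to the case where $K$ is the only singular maximal clique of $G_\ell$, pick a maximal clique $L\supsetneq K$ of $G$ with $S=L\setminus K\subseteq V(G)\setminus V(G_\ell)$, and move along the stress-preserving line $X_\ell-t\,X[V_\ell,S]\,M\,X[S,V_\ell]$. You differ only in how you show $\det X_t[K]\not\equiv0$.

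The paper takes $M=(X[S]+\lambda I)^{-1}$, with $\lambda$ chosen so that two determinants are nonzero: $\det(X[S]+\lambda I)$, and the determinant of the matrix obtained from $X[L]$ by adding $\lambda$ to its $S$-diagonal entries. Such a $\lambda$ exists because the latter equals $\det X[L]\neq0$ at $\lambda=0$. At $t=1$ the block $X_t[K]=X[K]-B(X[S]+\lambda I)^{-1}B^T$ is then a Schur complement in an invertible matrix, so no analysis of $\ker X[K]$ is needed.

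Your route also goes through: injectivity of $B^T$ on $N=\ker X[K]$, $M$ nondegenerate on $B^TN$, then the lowest-order coefficient. The expansion you flagged is fine. Let $P$ be invertible with last $d=\dim N$ columns $n_1,\dots,n_d$ spanning $N$. Then $P^TX[K]P=\bigl(\begin{smallmatrix}A'&O\\O&O\end{smallmatrix}\bigr)$ with $A'$ invertible, since the rank is $|K|-d$; this holds even when $N$ is isotropic. Pulling $t$ out of the last $d$ rows of $P^T(X[K]-tBMB^T)P$ shows that the $t^d$-coefficient of $\det(P)^2f(t)$ is $(-1)^d\det A'\cdot\det\bigl(n_i^TBMB^Tn_j\bigr)_{i,j}$. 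This is nonzero because $B^Tn_1,\dots,B^Tn_d$ form a basis of $B^TN$.

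Your parenthetical should change: the nonzero factor is $\det A'$. It is not in general a product of eigenvalues of $X[K]$, since $X[K]$ fails to be diagonalizable when $N$ is isotropic.

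The Schur-complement choice is shorter. Yours makes the mechanism more transparent: the only input is $\ker X[K]\cap\ker B^T=0$, which is exactly what $\det X[L]\neq0$ provides. It also shows that $f$ vanishes to order exactly $\dim\ker X[K]$ at $t=0$.
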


\begin{proof}
We prove the case $\ell=1$. Set $X_1:=X[V(G_1)]$. Since $X_1Z_1=O$, we have
$p_{G,G_1}(a)\in\Sigma(G_1)$. If $p_{G,G_1}(a)\notin\cKD(G_1)$, the conclusion follows from the definition of $\cR(G_1)$. Thus we assume $p_{G,G_1}(a)\in\cKD(G_1)$. Then there is a maximal clique $L$ of $G_1$ such that $\det X[L]=0$. Since $a\notin\cKD(G)$, the clique $L$ is not maximal in $G$, and Lemma~\ref{lem:local-maximal-clique} gives $L=K$. In particular, $K$ is not a maximal clique in $G$.

Choose a maximal clique $K'$ of $G$ properly containing $K$, and let $J:=K'\setminus K$. 
Since $K$ is maximal in $G_1$, we have $J\subset V(G_2)\setminus K$. 
The assumption $a\notin\cKD(G)$ also gives $\det X[K']\neq0$. 
Write
\[
X[K']=
\begin{pmatrix}
X[K]&X[K,J]\\
X[J,K]&X[J]
\end{pmatrix}.
\]
We fix $\lambda\in\C$ such that both of
$\det(X[J]+\lambda I)$ and
\[
\det\begin{pmatrix}
X[K]&X[K,J]\\
X[J,K]&X[J]+\lambda I
\end{pmatrix}
\]
are nonzero. 
Let
$T:=-(X[J]+\lambda I)^{-1}$. By the Schur complement, we have
$\det\bigl(X[K]+X[K,J]\,T\,X[J,K]\bigr)\neq0$.
Define
\[
X_1(\epsilon):=X_1+\epsilon X[V(G_1),J]\,T\,X[J,V(G_1)].
\]
By the assumptions, we see that $X_1Z_1=0$ and $X[J,V(G_1)]Z_1=O$.
Thus, we obtain
$X_1(\epsilon)Z_1=O$ for every $\epsilon$, and hence
$a_1(\epsilon):=p_{G_1}(X_1(\epsilon))\in\Sigma(G_1)$. 

Now $\det X_1(\epsilon)[K]$ is not the zero polynomial in $\epsilon$ since
$\det X_1(1)[K] =\det\bigl(X[K]+X[K,J]\,T\,X[J,K]\bigr)\neq0$.
Next, let $M$ be any maximal clique of $G_1$ other than $K$.
Then $M$ is also maximal in $G$ by Lemma~\ref{lem:local-maximal-clique}. 
Thus $a\notin\cKD(G)$ implies 
$\det X_1(0)[M]\neq0$, and hence $\det X_1(\epsilon)[M]$ is also a nonzero polynomial. 
Therefore, for all but finitely many $\epsilon$, 
we have $a_1(\epsilon)\in\Sigma(G_1)\setminus\cKD(G_1)$. Since $a_1(0)=p_{G,G_1}(a)$, we conclude that $p_{G,G_1}(a)\in\cR(G_1)$. 
\end{proof}

\begin{proof}
[Proof of Theorem \ref{thm:CS}]
By Proposition~\ref{prop:max-clique-component} and Proposition~\ref{prop:essential-extension},
$
\cD(G)\supset \cKD(G)\cup p_{G,G_1}^{-1}\cR(G_1)\cup p_{G,G_2}^{-1}\cR(G_2)$.
We prove the reverse inclusion. 
Since the right-hand side is Zariski closed, it is enough to consider $a\in\Sigma(G)$. 
There is nothing to prove in the case $a\in\cKD(G)$, so assume $a\notin\cKD(G)$. 
Now there exist a completion $X$ of $a$ and a stress $Z\in\S_{G}\setminus\{O\}$ such that $XZ=O$.

If $K=\varnothing$, then
we see that $Z=
\left(
\begin{smallmatrix}
 Z_1&O\\
 O&Z_2
\end{smallmatrix}
\right)
$, 
$Z_\ell\in\S_{G_\ell}$,
and $Z_\ell\neq O$ for at least one $\ell\in\{1,2\}$.
For such an $\ell$, we have $X[V(G_\ell)]Z_\ell=O$,
and hence $p_{G,G_\ell}(a)\in\Sigma(G_\ell)$.
Moreover, every maximal clique of $G_\ell$ is also a maximal clique of $G$.
Thus $p_{G,G_\ell}(a)\notin\cKD(G_\ell)$.
Therefore, $p_{G,G_\ell}(a)\in
\Sigma(G_\ell)\setminus\cKD(G_\ell)
\subset\cR(G_\ell)$,
which proves the desired inclusion.

Assume that $K\neq\varnothing$.
Let $A:=V(G_1)\setminus K$ and $B:=V(G_2)\setminus K$, and order the vertices as $A,K,B$. 
Since there are no edges between $A$ and $B$, we can write
\[
Z=\begin{pmatrix}
Z_{AA}&Z_{AK}&0\\
Z_{KA}&Z_{KK}&Z_{KB}\\
0&Z_{BK}&Z_{BB}
\end{pmatrix},
\]
and write $X$ with the same block structure.
The $B$-column block of $XZ=O$ gives
\begin{equation}
 \begin{aligned}
X_{AK}Z_{KB}+X_{AB}Z_{BB}&=O,\\
X_{KK}Z_{KB}+X_{KB}Z_{BB}&=O,\\
X_{BK}Z_{KB}+X_{BB}Z_{BB}&=O.
\end{aligned}
\label{eq:XZ_block}
\end{equation}
We first show that $\ker Z_{BB}\subset\ker Z_{KB}$.
Suppose $\ker Z_{BB}\not\subset\ker Z_{KB}$. Then there exists $\eta\in\ker Z_{BB}$ such that $Z_{KB}\eta\neq0$. Multiplying \eqref{eq:XZ_block} on the right by $\eta$ yields
$X_{AK}Z_{KB}\eta=0$, $X_{KK}Z_{KB}\eta=0$ and $X_{BK}Z_{KB}\eta=0$. 
Thus we have $X[V(G),K]Z_{KB}\eta=0$. 
Let $L$ be a maximal clique of $G$ containing $K$. Then, in particular, 
$X[L,K]Z_{KB}\eta=0$ with $Z_{KB}\eta\neq0$. 
Thus $X[L]$ has a nonzero kernel and hence $\det X[L]=0$. This contradicts $a\notin\cKD(G)$. Therefore,
$\ker Z_{BB}\subset\ker Z_{KB}$.

Suppose that $Z_{BB}\neq0$. From $\ker Z_{BB}\subset\ker Z_{KB}$, there is a matrix $\Lambda$ such that
$Z_{KB}=\Lambda Z_{BB}$. Set
\[
Z_2:=
\begin{pmatrix}\Lambda\\ I\end{pmatrix}
Z_{BB}
\begin{pmatrix}\Lambda^T&I\end{pmatrix}
=
\begin{pmatrix}
\Lambda Z_{BB}\Lambda^T&Z_{KB}\\
Z_{BK}&Z_{BB}
\end{pmatrix}.
\]
Then \eqref{eq:XZ_block} implies 
$X[V(G),V(G_2)]Z_2 
= X[V(G),V(G_2)]
\left[
\begin{smallmatrix}
 Z_{KB} \\
 Z_{BB}
\end{smallmatrix}\right]
\left[
\begin{smallmatrix}
 \Lambda^T & I
\end{smallmatrix}
\right]
 = O$. 
In addition, since $K$ is a clique, we have $Z_2\in\S_{G_2}\setminus\{O\}$. 
Therefore, Lemma~\ref{lem:local-stress-descent} gives
$p_{G,G_2}(a)\in\cR(G_2)$.

Finally suppose that $Z_{BB}=0$. 
Since $\ker Z_{BB} \subset \ker Z_{KB}$, we have $Z_{KB} = 0$.
Thus $Z$ is the zero extension of a nonzero matrix $Z_1\in\S_{G_1}$ and
$X[V(G),V(G_1)]Z_1=O$. By Lemma~\ref{lem:local-stress-descent},
 we have $p_{G,G_1}(a)\in\cR(G_1)$. This proves that $\Sigma(G)$ is contained in the right-hand side. Taking Zariski closures, we obtain the reverse inclusion. 
\end{proof}

\subsection{MC discriminants for chordal graphs}
\label{subsec:chordal-discriminant}

As a direct application of the clique-sum formula, we give an explicit description of the MC discriminant for chordal graphs.

\begin{prop}
\label{prop:chordal-discriminant}
Let $H$ be a chordal graph. Then
$\cD(H)=\cKD(H)$.
\end{prop}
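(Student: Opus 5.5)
We argue by induction on $|V(H)|$, using the clique-sum formula (Theorem~\ref{thm:CS}). The inclusion $\cKD(H)\subset\cD(H)$ holds for every graph by Proposition~\ref{prop:max-clique-component}. So it suffices to prove $\cD(H)\subset\cKD(H)$, or equivalently that $\cR(H)=\varnothing$. Indeed, $\cD(H)=\cKD(H)\cup\cR(H)$, and $\cR(H)\subset\cKD(H)$ would already suffice. We will prove the stronger statement $\Sigma(H)\subset\cKD(H)$, which gives $\cR(H)=\overline{\varnothing}^{\mathrm{Zar}}=\varnothing$.

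\emph{Base case: $H$ complete.} Here $V(H)$ is the unique maximal clique, so $\cKD(H)=\cV(\det X)$. Moreover $p_H$ is the identity, so each $a$ has the unique completion $X=a$. If $Z\in\S_H\setminus\{O\}$ satisfies $XZ=O$, then some nonzero column of $Z$ lies in $\ker X$, so $\det a=0$. Hence $\Sigma(H)\subset\cKD(H)$ and $\cR(H)=\varnothing$.

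\emph{Inductive step: $H$ chordal but not complete.} We use the standard fact that such an $H$ has a nontrivial clique-sum decomposition $H=H_1+_K H_2$, for instance by splitting along a minimal separator, which is a clique, or along a simplicial vertex. Disconnected graphs are handled by taking $K=\varnothing$. Both $H_1$ and $H_2$ are induced subgraphs of $H$, so they are chordal and have fewer vertices. By induction, $\cD(H_\ell)=\cKD(H_\ell)$, and the argument above yields the stronger conclusion $\Sigma(H_\ell)\subset\cKD(H_\ell)$, hence $\cR(H_\ell)=\varnothing$ for $\ell=1,2$. Theorem~\ref{thm:CS} then gives
\[
\cD(H)=\cKD(H)\cup p_{H,H_1}^{-1}\cR(H_1)\cup p_{H,H_2}^{-1}\cR(H_2)=\cKD(H).
\]

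To keep the induction consistent, we carry the statement ``$\cR(H)=\varnothing$'' rather than ``$\cD(H)=\cKD(H)$''. This is where the main subtlety lies. From $\cD(H)=\cKD(H)$ alone, one only gets $\cR(H)\subset\cKD(H)$, not $\cR(H)=\varnothing$. We need the emptiness of $\cR(H_\ell)$ as input to Theorem~\ref{thm:CS}, though containment in $\cKD(H_\ell)$ would not obviously suffice after pulling back along $p_{H,H_\ell}$.

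So the induction hypothesis is $\Sigma(H')\subset\cKD(H')$ for all smaller chordal $H'$. To propagate it, we use the proof of Theorem~\ref{thm:CS} rather than just its statement. That proof shows directly that every $a\in\Sigma(H)\setminus\cKD(H)$ satisfies $p_{H,H_\ell}(a)\in\cR(H_\ell)$ for some $\ell$. Since $\cR(H_\ell)=\varnothing$ by induction, no such $a$ exists. Therefore $\Sigma(H)\subset\cKD(H)$, and consequently $\cR(H)=\varnothing$, which closes the induction.
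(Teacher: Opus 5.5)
Your proof is correct and is essentially the paper's argument: the complete-graph base case, followed by Theorem~\ref{thm:CS} applied along a nontrivial clique-sum decomposition. The ``main subtlety'' you raise does not exist. Since $\Sigma(H)\subseteq\cD(H)$, the equality $\cD(H)=\cKD(H)$ already forces $\Sigma(H)\setminus\cKD(H)=\varnothing$ and hence $\cR(H)=\varnothing$. (Even $\cR(H)\subseteq\cKD(H)$ would force this, because $\cR(H)$ is the closure of a set disjoint from $\cKD(H)$.) So the statement of Theorem~\ref{thm:CS} suffices, exactly as in the paper, and you do not need to reopen its proof.
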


\begin{proof}
For the complete graph $K_m$, a partial matrix $a$ belongs to $\Sigma(K_m)$ if and only if $a$ is singular. 
Hence $\cD(K_m)=\cKD(K_m)$.
Every chordal graph is obtained from complete graphs by iterated clique-sums, so it remains to show that this identity is preserved under a nontrivial clique-sum $H=H_1+_K H_2$.
If $\cD(H_\ell)=\cKD(H_\ell)$ for $\ell=1,2$, then $\cR(H_\ell)=\varnothing$.
The clique-sum formula (Theorem~\ref{thm:CS}) therefore gives $\cD(H)=\cKD(H)$.
\end{proof}
\begin{remark}
A closely related characterization of chordal graphs in terms of
\textit{constraint nondegeneracy} in semidefinite programming was given in Qi~\cite{QiChordalNondegeneracy}.
We call $a\in A_G(\mathbb R)$ 
a \emph{$G$-partial positive definite} matrix if $a[L]$ is positive definite for every clique $L$ of $G$.
He proved that a graph $G$ is chordal if and only if, for every $G$-partial positive definite matrix admitting a PSD completion, constraint nondegeneracy holds at every PSD completion. In our formulation, this constraint nondegeneracy condition is equivalent to the transversality condition used in Proposition~\ref{prop:stress-transversality}.
Thus Proposition~\ref{prop:chordal-discriminant} may be viewed as a complex-algebraic counterpart of the chordal case of Qi's result. Indeed, if $H$ is chordal and a real $H$-partial matrix $a$ is positive definite on every maximal clique, then $a\notin\cKD(H)=\cD(H)$. Consequently, the fiber over $a$ contains no degenerate completion point, even over $\C$. In particular, every PSD completion is constraint nondegenerate.
\end{remark}

\section{MC discriminants for cycles}
\label{sec:cycles}

In this section, we determine the defining polynomial of the discriminant of a cycle.
For integers $n\ge3$ and $3\le m\le n$, let $C_m$ be a cycle on the vertices $v_1,\ldots,v_m$.
Throughout the section, the vertices $v_1,\ldots,v_n$ are in cyclic order, and indices are taken modulo $n$.
Let $e_i:=\{v_i,v_{i+1}\}$.
For $n\ge4$, Proposition~\ref{prop:max-clique-component} gives
\[
\cKD(C_n)=\bigcup_{i=1}^n\cV_{A_{C_n}}(\det X[e_i]),
\quad
\cD(C_n)=\cKD(C_n)\cup\cR(C_n).
\]
Thus to determine $\cD(C_n)$, it remains to determine $\cR(C_n)$.

\subsection{Essential degeneracy of cycles}
Matrix completion problems for cycles have been extensively studied and 
the following two propositions are folklore facts.
For completeness, we provide elementary proofs of both propositions.

\begin{prop}
\label{prop:cycle-rank2}
Let $n$ be an integer with $n\ge4$.
Let $a\in A_{C_n}\setminus \cKD(C_n)$ and let $X$ be a completion of $a$. 
Then the following two conditions are equivalent.
\begin{enumerate}[(1)]
\item There exists $Z\in\S_{C_n}\setminus \{O\}$ such that $XZ=O$.
\item $\rank X=2$.
\end{enumerate}
Moreover, when these conditions hold, the space of $X$-stresses is one-dimensional.
\end{prop}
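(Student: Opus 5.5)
The approach is to use Lemma~\ref{lem:1} and work with a minimal-rank Gram representation $X=U^TU$, $U=(u_1,\dots,u_n)\in\C^{r\times n}$. By that lemma, $Z\in\S_{C_n}$ is an $X$-stress if and only if
\[
Z_{ii}u_i+Z_{i,i-1}u_{i-1}+Z_{i,i+1}u_{i+1}=0\qquad(i=1,\dots,n).
\]
The hypothesis $a\notin\cKD(C_n)$ says exactly that $\det X[e_i]\neq0$ for every $i$. Equivalently, $u_i$ and $u_{i+1}$ are linearly independent for every $i$, because $X[e_i]$ is the Gram matrix of $u_i,u_{i+1}$ and a Gram matrix of dependent vectors is singular. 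In particular $r\ge2$.

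For (1)$\Rightarrow$(2), I first show that a nonzero stress has all edge weights $Z_{i,i+1}\neq0$. Suppose $Z_{i,i+1}=0$ for some $i$. The equation at $v_{i+1}$ reads $Z_{i+1,i+1}u_{i+1}+Z_{i+1,i+2}u_{i+2}=0$, and independence of $u_{i+1},u_{i+2}$ forces $Z_{i+1,i+1}=Z_{i+1,i+2}=0$. Propagating around the cycle kills every entry, so $Z=O$, a contradiction.

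With all edge weights nonzero, each equation expresses $u_{i+1}$ in $\Span(u_{i-1},u_i)$. Induction from $u_1,u_2$ then gives $\Span(u_1,\dots,u_n)=\Span(u_1,u_2)$. Since $U$ has rank $r$ and $u_1,u_2$ are independent, $r=2$.

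For (2)$\Rightarrow$(1), let $r=2$. For each $i$, the three vectors $u_{i-1},u_i,u_{i+1}$ lie in $\C^2$. Since $u_{i-1},u_i$ form a basis, there is a unique linear relation $\alpha_iu_{i-1}+\beta_iu_i+\gamma_iu_{i+1}=0$ normalized by $\gamma_i=1$. In this relation $\alpha_i\neq0$, because $u_i,u_{i+1}$ are independent. The task is to rescale these local relations by $c_i\neq0$ so that the off-diagonal entries are symmetric, i.e.\ $c_i\gamma_i=c_{i+1}\alpha_{i+1}$ on each edge $e_i$; the diagonal entries are then $Z_{ii}=c_i\beta_i$.

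This compatibility step, going around the whole cycle, is the main obstacle. I would resolve it with determinants in $\C^2$. Writing $[x,y]:=\det(x\ y)$, Cramer's rule gives the relation at $v_i$ explicitly as
\[
[u_i,u_{i+1}]\,u_{i-1}-[u_{i-1},u_{i+1}]\,u_i+[u_{i-1},u_i]\,u_{i+1}=0.
\]
Its coefficient of $u_{i+1}$ is $[u_{i-1},u_i]$ and its coefficient of $u_{i-1}$ is $[u_i,u_{i+1}]$. Dividing the relation at $v_i$ by $[u_{i-1},u_i][u_i,u_{i+1}]$, which is nonzero, makes the coefficient of $u_{i+1}$ equal to $1/[u_i,u_{i+1}]$. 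The relation at $v_{i+1}$, divided likewise, has coefficient of $u_i$ equal to $1/[u_i,u_{i+1}]$ as well. So the scaled weights agree on every edge with no cycle obstruction, and they yield a nonzero symmetric $Z\in\S_{C_n}$ with $UZ=O$, hence $XZ=O$.

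For uniqueness, suppose $Z,Z'$ are two stresses. Both have $Z_{12},Z'_{12}\neq0$, so some combination $Z''=Z-tZ'$ has $Z''_{12}=0$. By the propagation argument above, $Z''=O$. Hence the stress space is one-dimensional.
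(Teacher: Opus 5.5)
Your proof is correct and follows essentially the paper's route. For (1)$\Rightarrow$(2) you use the same propagation of vanishing stress entries around the cycle via independence of consecutive vectors. For (2)$\Rightarrow$(1) you use the same edge weights $Z_{i,i+1}=\det(u_i,u_{i+1})^{-1}$; your Cramer identity just makes the diagonal entries explicit. For uniqueness you reuse the propagation fact (a stress vanishing on one edge is zero) instead of the paper's identity $\widetilde z_i\det(u_i,u_{i+1})=\widetilde z_{i-1}\det(u_{i-1},u_i)$. That is slightly more economical and does not need $r=2$.

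One small correction: your ``equivalently'' only holds in one direction. Over $\C$ the form $x^Ty$ has isotropic vectors, so independent $u_i,u_{i+1}\in\C^r$ with $r\ge3$ can have a singular Gram matrix. You only ever use $\det X[e_i]\neq0\Rightarrow u_i,u_{i+1}$ independent, so nothing breaks.
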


\begin{proof}
First, we prove $(1)\Rightarrow(2)$. Let $X_{\bullet, i}$ be the $i$th column of $X$. Since $\det X[e_i]\neq0$, the columns $X_{\bullet, i}$ and $X_{\bullet,i+1}$ are linearly independent. Suppose that the three columns $X_{\bullet,i-1},X_{\bullet, i},X_{\bullet,i+1}$ are linearly independent for some $i$. 
Since the support of $Z_{\bullet,i}$ is contained in $\{i-1,i,i+1\}$, $XZ_{\bullet,i}=0$ gives
$z_{i-1,i}=z_{ii}=z_{i,i+1}=0$. Then the equation $XZ_{\bullet, i+1} = 0$ becomes
\[
z_{i+1,i+1}X_{\bullet,i+1}+z_{i+1,i+2}X_{\bullet,i+2}=0.
\]
By the linear independence of $X_{\bullet,i+1}$ and $X_{\bullet,i+2}$, we obtain
$z_{i+1,i+1}=z_{i+1,i+2}=0$. Repeating this argument yields $Z=O$. This is a contradiction. Hence, for every $i$, the three columns
$X_{\bullet,i-1},X_{\bullet i},X_{\bullet,i+1}$ are linearly dependent. Since every two consecutive columns are linearly independent,
\[
\operatorname{Span}\{X_{\bullet,i-1},X_{\bullet i}\}
=
\operatorname{Span}\{X_{\bullet i},X_{\bullet,i+1}\},\ \forall i \in [n].
\]
and all columns lie in the same two-dimensional subspace. Therefore $\rank X=2$.

Next, we prove $(2)\Rightarrow(1)$. By Lemma~\ref{lem:1}, we write
$X=U^TU$ with $U=(u_1,\ldots,u_n)\in\C^{2\times n}$. 
Note that
$
\det(u_i,u_{i+1})^2 = 
\left|
\begin{smallmatrix}
 \langle u_i,u_i \rangle & \langle u_i,u_{i+1} \rangle \\
 \langle u_{i+1},u_i \rangle & \langle u_{i+1},u_{i+1} \rangle
\end{smallmatrix}
\right|
= \det X[e_i] \neq 0.
$
Now we will construct an appropriate $Z$.
Set $z_i:=\det(u_i,u_{i+1})^{-1}$. Then
\[
\det\bigl(u_i,\, z_{i-1}u_{i-1}+z_i u_{i+1}\bigr)
=-z_{i-1}\det(u_{i-1},u_i)+z_i\det(u_i,u_{i+1})=0.
\]
Hence there exists $\lambda_i\in\C$ such that
$z_{i-1}u_{i-1}+z_i u_{i+1}=\lambda_i u_i$. Define
$Z_{i,i+1}=Z_{i+1,i}:=z_i$, $Z_{ii}:=-\lambda_i$, and set the other off-diagonal entries equal to $0$. 
Then $z_{i-1}u_{i-1} - \lambda_i u_i + z_i u_{i+1}=0$ gives
$UZ = O$ and hence $XZ = O$.

It remains to show uniqueness up to a scalar. Let $\widetilde Z$ be any $U$-stress and let
$\widetilde z_i:=\widetilde Z_{i,i+1}$. The $i$th equilibrium equation is
$$
\widetilde z_{i-1}u_{i-1}
+\widetilde Z_{ii}u_i
+\widetilde z_i u_{i+1}=0.
$$
Taking the determinant with $u_i$ gives
$\widetilde z_i\det(u_i,u_{i+1}) = \widetilde z_{i-1}\det(u_{i-1},u_i)$.
Thus there exists $c\in\C$ such that $\widetilde z_i=c z_i$ for every $i$. Hence $\widetilde Z-cZ$ is a $U$-stress whose off-diagonal entries all vanish. Its $i$th equilibrium equation is $(\widetilde Z_{ii}-cZ_{ii})u_i=0$. 
Since $u_i\neq0$, we have $\widetilde Z_{ii}=cZ_{ii}$ for every $i$. Therefore
$\widetilde Z=cZ$, and the space of $U$-stresses, equivalently of $X$-stresses, is one-dimensional.
\end{proof}

Proposition~\ref{prop:cycle-rank2} motivates us to look at the following map.
For every $m$, define
\[
\Omega_m:=\{U=(u_1,\ldots,u_m)\in(\C^2)^m:
\det(u_i,u_j)\neq0\text{ for all }\{v_i,v_j\}\in E(C_m)\}
\]
and
$\Phi_m:\Omega_m\longrightarrow A_{C_m},
\ U\longmapsto p_{C_m}(U^TU)$.

\begin{prop}
\label{prop:cycle-essential-hypersurface}
Let $n$ be an integer with $n\ge4$. Then
$
\cR(C_n)=\overline{\Phi_n(\Omega_n)}^{\mathrm{Zar}}$.
Moreover, $\cR(C_n)$ is an irreducible hypersurface in $A_{C_n}$.
\end{prop}

\begin{proof}
First, Proposition~\ref{prop:cycle-rank2} gives $\Sigma(C_n)\setminus \cKD(C_n) = \Phi_n(\Omega_n)$, and hence the definition of the essential discriminant yields $\cR(C_n)=\overline{\Phi_n(\Omega_n)}^{\mathrm{Zar}}$. Since $\Omega_n$ is irreducible, so is $\cR(C_n)$.
For $U\in\Omega_n$, the space of linear dependencies among the rows of the Jacobian of $\Phi_n$ at $U$ is isomorphic to the space of $U$-stresses.
(This is the matrix completion analogue of a well-known fact that the left kernel of a rigidity matrix is the space of equilibrium stresses of the underlying framework, see, e.g.~\cite[Section~3.2]{SingerCucuringu})
By Proposition~\ref{prop:cycle-rank2}, this space is one-dimensional, so the Jacobian has rank $2n-1$.
Consequently,
$\dim\cR(C_n)=2n-1=\dim A_{C_n}-1$.
Therefore, $\cR(C_n)$ is a hypersurface.
\end{proof}

\subsection{Recurrence formula for cycles}

In this subsection, let $C_m$ have cyclic order $v_1,\ldots,v_m$.
By Proposition~\ref{prop:cycle-essential-hypersurface},
for $m\ge4$, an irreducible defining polynomial $\rho_{C_m}\in R_{C_m}$ of $\cR(C_m)$ 
is  determined up to a nonzero scalar. 
We may also denote $\rho_{C_3}:=\det X[\{v_1,v_2,v_3\}]$.
For $m\ge4$, Proposition~\ref{prop:cycle-essential-hypersurface} gives
\begin{equation}
\cV_{A_{C_m}}(\rho_{C_m})
=\overline{\Phi_m(\Omega_m)}^{\mathrm{Zar}}.
\label{eq:cycle-gram-closure}
\end{equation}

We are ready to state the main theorem of this section.
\begin{thm}
\label{thm:cycle-resultant-recursion}
Let $n$ be an integer with $n\ge3$, and let $\rho_{C_n}$ be as defined above.
For every edge $\{v_i,v_{i+1}\}\in E(C_n)$,
\begin{equation}
\deg_{x_{i,i+1}}\rho_{C_n}=2^{n-2},
\qquad
\operatorname{lc}_{x_{i,i+1}}\rho_{C_n}
\doteq\prod_{\substack{1\le k\le n\\k\notin\{i,i+1\}}}x_{kk}^{\,2^{n-3}},
\label{eq:cycle-edge-degree-leading}
\end{equation}
where $\operatorname{lc}_{x_{ij}}$ denotes the leading coefficient as a polynomial in $x_{ij}$.
Moreover, if  $n\ge4$, then $\rho_{C_n}$ 
 can be scaled so that
\begin{equation*}
\rho_{C_n}
=\operatorname{Res}_{x_{1,n-1}}\!\left(
\det X[\{v_{n-1},v_n,v_1\}],\,\rho_{C_{n-1}}
\right).
\end{equation*}
\end{thm}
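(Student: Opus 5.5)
We argue by induction on $n$, proving the degree/leading-coefficient claim \eqref{eq:cycle-edge-degree-leading} and the resultant recursion together. For $n=3$, $\rho_{C_3}=\det X[\{v_1,v_2,v_3\}]$ is quadratic in each off-diagonal variable $x_{i,i+1}$, with leading coefficient $-x_{kk}$ for the remaining index $k$. This matches $2^{1}$ and $x_{kk}^{2^0}$.

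For the inductive step with $n\ge4$, we set
\[
Q:=\operatorname{Res}_{x_{1,n-1}}\bigl(\det X[\{v_{n-1},v_n,v_1\}],\rho_{C_{n-1}}\bigr).
\]
Here $f:=\det X[\{v_{n-1},v_n,v_1\}]$ is quadratic in $y:=x_{1,n-1}$ with leading coefficient $-x_{nn}$. By induction $\rho_{C_{n-1}}$ has degree $2^{n-3}$ in $y$, since $y$ is an edge of $C_{n-1}$, with leading coefficient $\doteq\prod_{k=2}^{n-2}x_{kk}^{2^{n-4}}$.

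\emph{Step 1: $\cR(C_n)\subseteq\cV(Q)$.} Take $U\in\Omega_n$ generic and put $X=U^TU$. Then $X[\{v_1,\dots,v_{n-1}\}]$ is the Gram matrix of a configuration in $\Omega_{n-1}$, generically, so its $C_{n-1}$-data lie in $\cV(\rho_{C_{n-1}})$ by \eqref{eq:cycle-gram-closure}. Also $\det X[\{v_{n-1},v_n,v_1\}]=0$ because the rank is $2$. Hence $y=X_{1,n-1}$ is a common root. Since both leading coefficients are generically nonzero, $Q(\Phi_n(U))=0$, and taking closures gives the inclusion.

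\emph{Step 2: $Q\doteq\rho_{C_n}$.} From Step 1, $\rho_{C_n}\mid Q$, because $\rho_{C_n}$ is irreducible by Proposition~\ref{prop:cycle-essential-hypersurface}. It remains to show that $Q$ is irreducible, or at least has no other factor. We do this by computing degrees.
\begin{itemize}
\item The resultant formula $Q\doteq \operatorname{lc}(f)^{2^{n-3}}\prod_{f(\alpha)=0}\rho_{C_{n-1}}(\alpha)$ shows that $Q$ is bihomogeneous-like in the edge variables.
\item For the edge $x_{n-1,n}$, only $f$ involves it. The resultant has degree $\deg_y\rho_{C_{n-1}}=2^{n-3}$ in the coefficients of $f$, and each coefficient of $f$ has degree at most $2$ in $x_{n-1,n}$. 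This gives $\deg_{x_{n-1,n}}Q\le 2^{n-2}$. The leading coefficient is obtained by taking the top part $-x_{11}x_{n-1,n}^2$ of $f$'s constant term; it yields $x_{11}^{2^{n-3}}\cdot\bigl(\text{lc}_y\rho_{C_{n-1}}\bigr)^2$, up to powers of $x_{nn}$ that cancel out.
\item Edges of $C_{n-1}$ other than $y$ are handled similarly: the degree is $2\cdot 2^{n-3}$, coming from the two roots.
\end{itemize}
Note that the leading coefficient is a monomial in the diagonal variables. Thus any factor of $Q$ other than $\rho_{C_n}$ either involves no edge variable, in which case it is a diagonal monomial, or it must be excluded.

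We would exclude a pure diagonal monomial factor by specializing to a point with all $x_{kk}=0$ and checking $Q\ne0$ there. Alternatively, we compute $\deg_{x_{i,i+1}}\rho_{C_n}$ directly from the parametrization: fix all data except $x_{i,i+1}$ and count rank-$2$ Gram completions. This number is $2^{n-2}$, obtained by counting the choices of $u_k$ up to $O(2)$, with a sign ambiguity at each step. Matching this count with the degree bound forces $Q/\rho_{C_n}$ to be free of edge variables, hence a constant by the specialization check. It then follows that $\rho_{C_n}$ inherits both the degree and the leading coefficient in \eqref{eq:cycle-edge-degree-leading}, and the recursion holds for $C_n$.

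The main obstacle is Step 2: showing that $Q$ has no extraneous factor, that is, establishing the exact degree $2^{n-2}$ of $\rho_{C_n}$ independently. We expect this to work via the fiber count of $\Phi_n$ restricted to lines in a single edge coordinate, together with a careful treatment of the diagonal leading coefficients so that no power of some $x_{kk}$ divides $Q$.
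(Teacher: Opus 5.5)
Your Step~1 and the degree bookkeeping in Step~2 follow the paper (Claims~\ref{claim:cycle1} and~\ref{claim:cycle2} and Lemma~\ref{lem:cycle-edge-degree-lower-bound}). They correctly reduce the problem to $Q=H\rho_{C_n}$ with $H$ free of the edge variables. Comparing leading coefficients in $x_{12},\dots,x_{n-2,n-1}$ then gives $H\doteq x_{nn}^{r}$, because the gcd of the monomials $\prod_{k\notin\{i,i+1\}}x_{kk}^{2^{n-3}}$ for $1\le i\le n-2$ is $x_{nn}^{2^{n-3}}$.

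\textbf{The gap is the step that should give $r=0$.} Your proposed check is to specialize to a point with all $x_{kk}=0$ and verify $Q\neq0$. This cannot succeed. At such a point both formal leading coefficients vanish: $\operatorname{lc}_y f=-x_{nn}$ and $\operatorname{lc}_y\rho_{C_{n-1}}\doteq\prod_{k=2}^{n-2}x_{kk}^{2^{n-4}}$. So the Sylvester matrix has a zero first column and $Q$ vanishes there identically. For $n=4$ you can see it directly: $f$ and $g$ become $2x_{14}x_{34}x_{13}$ and $2x_{12}x_{23}x_{13}$. The true conclusion $Q\doteq\rho_{C_n}$ also vanishes there, so this test says nothing about $H$. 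Hence ``a constant by the specialization check'' is unsupported, exactly at the point you flagged as the main obstacle.

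\textbf{The missing ingredient is already in your second bullet, once made precise.} Use the other product formula, $Q\doteq(\operatorname{lc}_y g)^2\prod_{g(\beta)=0}f(\beta)$, taken over the $2^{n-3}$ roots of $g=\rho_{C_{n-1}}$ in $y$. These roots do not involve $x_{n-1,n}$, and each $f(\beta)$ is quadratic in $x_{n-1,n}$ with leading coefficient $-x_{11}$. Hence $\operatorname{lc}_{x_{n-1,n}}Q\doteq\prod_{k\notin\{n-1,n\}}x_{kk}^{2^{n-3}}$, which contains no $x_{nn}$. Since $H\doteq x_{nn}^r$ is free of $x_{n-1,n}$, $x_{nn}^r$ divides this leading coefficient, so $r=0$.

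The paper never computes the leading coefficient in $x_{n-1,n}$. It kills $x_{nn}^r$ with a symmetry trick instead:
\begin{enumerate}
\item $\operatorname{lc}_{x_{23}}\rho_{C_n}$ inherits $x_{11}$-degree $2^{n-3}$ from $Q$, since $H$ is free of $x_{11}$.
\item Rotating the cycle shows that $\operatorname{lc}_{x_{12}}\rho_{C_n}$ has $x_{nn}$-degree $2^{n-3}$.
\item This matches the $x_{nn}$-degree of $\operatorname{lc}_{x_{12}}Q$, forcing $r=0$.
\end{enumerate}
Both routes work; yours, once repaired, is slightly more direct.

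\textbf{A secondary point on the lower bound.} Your fiber count gives $\deg_{x_{i,i+1}}\rho_{C_n}\ge2^{n-2}$ only if you also show two things. First, the restriction of $\rho_{C_n}$ to the line must not be identically zero. Second, the $2^{n-2}$ values of the free entry must be pairwise distinct. The paper secures these using invariance under diagonal scaling and the angle choice $\theta_1\gg\theta_2\gg\cdots\gg\theta_{n-1}$.
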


Before the proof of Theorem~\ref{thm:cycle-resultant-recursion},
we first establish the degree lower bound used in the proof.
\begin{lemma}
\label{lem:cycle-edge-degree-lower-bound}
Let $n$ be an integer with $n\ge4$. Then, for every edge $\{v_i,v_{i+1}\}\in E(C_n)$,
\[
\deg_{x_{i,i+1}}\rho_{C_n}\ge2^{n-2}.
\]
\end{lemma}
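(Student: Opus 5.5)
The plan is to bound $\deg_{x_{i,i+1}}\rho_{C_n}$ from below by counting roots. I will show that for a generic choice of all coordinates of $A_{C_n}$ other than $x_{i,i+1}$, the line obtained by letting $x_{i,i+1}$ vary meets $\cR(C_n)=\overline{\Phi_n(\Omega_n)}^{\mathrm{Zar}}$ in at least $2^{n-2}$ distinct points. I will also show that $\rho_{C_n}$ does not vanish identically on such a line. Then the univariate polynomial $\rho_{C_n}(\dots,x_{i,i+1},\dots)$ has at least $2^{n-2}$ distinct roots, which forces $\deg_{x_{i,i+1}}\rho_{C_n}\ge 2^{n-2}$. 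By cyclic symmetry we may take $i=n$, so the edge is $\{v_n,v_1\}$ and the rest of the cycle is the path $v_1,v_2,\ldots,v_n$.

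\textbf{Step 1 (counting Gram configurations).} Fix generic values $a_{kk}$ for the diagonal and $a_{k,k+1}$ for $1\le k\le n-1$. I count $U=(u_1,\ldots,u_n)\in(\C^2)^n$ with $u_k^Tu_k=a_{kk}$ and $u_k^Tu_{k+1}=a_{k,k+1}$, modulo the group $O(2,\C)$, which preserves all Gram entries.
\begin{itemize}
\item Using the group, normalize $u_1$ to a fixed vector. Its stabilizer in $O(2,\C)$ is $\{1,\sigma\}$, where $\sigma$ is the reflection fixing $u_1$.
\item Each subsequent $u_{k+1}$ lies on the intersection of the line $\{u:u_k^Tu=a_{k,k+1}\}$ with the conic $\{u:u^Tu=a_{k+1,k+1}\}$. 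Generically this intersection consists of two points.
\item This yields $2^{n-1}$ configurations, and $\sigma$ pairs them up. So there are $2^{n-2}$ orbits.
\end{itemize}
Each configuration produces the value $x_{n1}=u_n^Tu_1$ of the remaining coordinate. For generic data it also satisfies the open conditions defining $\Omega_n$, namely $\det(u_k,u_{k+1})^2=\det a[e_k]\ne0$ together with the analogous condition on the edge $\{n,1\}$. Hence the resulting points lie in $\Phi_n(\Omega_n)\subseteq\cV(\rho_{C_n})$.

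\textbf{Step 2 (distinctness).} This is the main point: the $2^{n-2}$ values of $x_{n1}$ must be pairwise distinct for generic data. It suffices to check this on a subfamily of parameters that is Zariski dense in the relevant parameter space. I would take $a_{kk}=1$ and $u_k=(\cos\theta_k,\sin\theta_k)$. Then $a_{k,k+1}=\cos\varphi_k$, and the two choices at step $k$ correspond to $\theta_{k+1}=\theta_k\pm\varphi_k$. The reachable values are therefore
\[
x_{n1}=\cos(\pm\varphi_1\pm\cdots\pm\varphi_{n-1}),
\]
with sign patterns taken modulo a global sign flip; this flip is exactly the action of $\sigma$. For $\varphi_k$ generic, say $\mathbb Q$-linearly independent together with $\pi$, these $2^{n-2}$ cosines are pairwise distinct. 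Distinctness is an open condition, so it persists for generic complex data. The only subtlety is arguing that this unit-norm subfamily does not lie inside the bad locus. I will handle this by invoking homogeneity under the rescaling $u_k\mapsto t_ku_k$, which reaches arbitrary diagonal values.

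\textbf{Step 3 (non-vanishing on the line).} I must exclude the possibility that $\rho_{C_n}$ vanishes identically on the generic line, for instance because $\rho_{C_n}$ does not involve $x_{n1}$ at all. If it did vanish identically, the whole line would lie in $\cR(C_n)$. Generic points of $\cR(C_n)$ lie in the constructible set $\Phi_n(\Omega_n)$, so a generic point of the line would be of the form $p_{C_n}(U^TU)$. Step 1 shows that only finitely many values of $x_{n1}$ arise this way, which is a contradiction. More precisely, $\cR(C_n)$ is an irreducible hypersurface by Proposition~\ref{prop:cycle-essential-hypersurface}, so it is not a cylinder in the $x_{n1}$-direction.

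Combining Steps 1–3, the polynomial $\rho_{C_n}$ restricted to the line is nonzero and has at least $2^{n-2}$ distinct roots, which gives the claimed bound. I expect Step 2, the genericity and distinctness argument, to be the main obstacle.
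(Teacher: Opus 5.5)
Your proposal is correct and is essentially the paper's proof. Both reduce to unit diagonal via invariance of $\cV(\rho_{C_n})$ under diagonal scaling, realize $2^{n-2}$ sign-pattern configurations of unit vectors with prescribed consecutive cosines, and count the resulting distinct roots in the remaining edge variable; the only real difference is that you get distinctness from $\mathbb{Q}$-linear independence of the angles together with $\pi$, whereas the paper takes $\theta_1\gg\theta_2\gg\cdots\gg\theta_{n-1}>0$ with sum below $\pi$ so that cosine is injective on the final angles. Your Step~3 is more roundabout than needed: non-vanishing on a generic line is automatic from $\rho_{C_n}\neq0$, and if $\rho_{C_n}$ did not involve $x_{n1}$ it would restrict to a nonzero constant rather than vanish, so the point that actually needs care is non-vanishing on your special unit-diagonal lines, which your homogeneity argument handles.
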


\begin{proof}
Label the vertices in cyclic order.
By cyclic symmetry, it suffices to consider $x_{1n}$.
Choose generic positive angles $\theta_1,\ldots,\theta_{n-1}$ such that
\begin{equation}\label{eq:theta}
\theta_1 >> \theta_2 >>\dots >> \theta_{n-1}>0,
\qquad
\sum_{j=1}^{n-1}\theta_j<\pi,
\end{equation}
where $\theta_i >> \theta_{i+1}$ means that 
$\theta_i$ is sufficiently larger than $\theta_{i+1}$.
Let $h$ be a polynomial in $x_{1n}$ obtained from $\rho_{C_n}$ by 
putting $x_{ii}=1$ for all $i$ and $x_{i,i+1}=\cos \theta_i$ for $i=1,\dots, n-1$.
Then, $\deg_{x_{1n}}\rho_{C_n}\ge2^{n-2}$ follows by showing 
that $h$ has at least $2^{n-2}$ distinct roots.
We first check that $h$ is nonzero.
Since $\cV(\rho_{C_n})$ is a proper hypersurface invariant under invertible diagonal scaling, while the Zariski-dense set of points in $A_{C_n}$ with nonzero diagonal entries can be normalized to $x_{ii}=1$, setting $x_{ii}=1$ for all $i$ does not make $\rho_{C_n}$ vanish identically.
%
Also, since the admissible tuples $(\cos \theta_1, \dots, \cos \theta_{n-1})$ form a nonempty open subset of 
$\mathbb{R}^{n-1}$,  
the generic choice of the angles gives $h\not\equiv0$.

To see that $h$ has at least $2^{n-2}$ distinct roots, pick any sign vector $\varepsilon=(\varepsilon_2,\ldots,\varepsilon_{n-1})\in\{\pm1\}^{n-2}$.
Define $U^{\varepsilon}=(u_1^{\varepsilon}, \dots, u_n^{\varepsilon})\in (\mathbb{C}^2)^n$ by 
\[
u_i^{\varepsilon}=(\cos \varphi_i^{\varepsilon}, \sin \varphi_i^{\varepsilon})^{\top}
\]
with
\[
\varphi_1^{\varepsilon}=0, \quad \varphi_2^{\varepsilon}=\theta_1, 
\quad
\varphi_i^{\varepsilon}=\theta_1+\sum_{j=2}^{i-1}\varepsilon_j\theta_j
\quad(3\le i\le n).
\]
Then $(u_i^{\varepsilon})^Tu_i^{\varepsilon}=1$ and $(u_i^{\varepsilon})^Tu_{i+1}^{\varepsilon}=\cos\theta_i$ for $1\le i\le n-1$, independently of $\varepsilon$.
Moreover, the values $(u_1^{\varepsilon})^T u_n^{\varepsilon}$ are distinct for all $\varepsilon$ by (\ref{eq:theta}).
Indeed, if two sign vectors first differ at position $j$, 
the difference contributed by $2\theta_j$ cannot be canceled by the later angles.
Hence their final angles $\varphi_n^{\varepsilon}$ are distinct,
and the injectivity of cosine in the interval $[0, \pi]$ implies 
that the values $(u_1^{\varepsilon})^T u_n^{\varepsilon}$ are all distinct.
Since 
$\Phi_n(U^{\varepsilon})\in\cV_{A_{C_n}}(\rho_{C_n})$,
$(u_1^{\varepsilon})^T u_n^{\varepsilon}$ is a root of $h$,
and $h$ has at least $2^{n-2}$ distinct roots.
\end{proof}

\begin{proof}[Proof of Theorem~\ref{thm:cycle-resultant-recursion}]
We proceed by induction on $n$.
For $n=3$, we have $\rho_{C_3}=\det X[\{v_1,v_2,v_3\}]$ by definition
and the claim can be directly checked by expanding the determinant.
Let $n\ge4$ and assume the assertion for $n-1$.
Deleting $v_n$ and adding the edge $\{v_1,v_{n-1}\}$ gives $C_{n-1}$. Let
\[
\xi:=x_{1,n-1},\quad
f:=\det X[\{v_1,v_{n-1},v_n\}],\quad
g:=\rho_{C_{n-1}},\quad F=\operatorname{Res}_{\xi}(f,g).
\]
%
\begin{claim}\label{claim:cycle1}
$\cV_{A_{C_n}}(\rho_{C_n})\subseteq\cV_{A_{C_n}}(F)$.
\end{claim}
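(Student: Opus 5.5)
Since $\cV_{A_{C_n}}(\rho_{C_n})=\overline{\Phi_n(\Omega_n)}^{\mathrm{Zar}}$ by \eqref{eq:cycle-gram-closure} and $\cV_{A_{C_n}}(F)$ is Zariski closed, it suffices to show that $F$ vanishes at every point of $\Phi_n(\Omega_n)$, or even on a Zariski-dense subset of it. Take $U=(u_1,\ldots,u_n)\in\Omega_n$ and set $a:=\Phi_n(U)$. Put $\xi_0:=u_1^Tu_{n-1}$; this is the value of the chord variable in the rank-two Gram matrix $U^TU$. The plan is to show that $\xi_0$ is a common root of $f$ and $g$ after specializing the cycle coordinates to $a$. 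I will also control the leading coefficients in $\xi$, so that the vanishing of $F(a)$ follows from the existence of a common root.

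The first step is the root of $f$. Specializing at $a$ and $\xi=\xi_0$, the matrix $X[\{v_1,v_{n-1},v_n\}]$ is the Gram matrix of the three vectors $u_1,u_{n-1},u_n$ in $\C^2$. Hence it has rank at most two and $f=0$. The second step is the root of $g$. Let $U'=(u_1,\ldots,u_{n-1})$ be the configuration of the smaller cycle $C_{n-1}$, whose closing edge is $\{v_1,v_{n-1}\}$. Then $p_{C_{n-1}}(U'^TU')$ has diagonal and edge entries agreeing with $a$ and chord entry $\xi_0$. If $\det(u_1,u_{n-1})\ne0$, then $U'\in\Omega_{n-1}$, and this point lies in $\Phi_{n-1}(\Omega_{n-1})\subseteq\cV(\rho_{C_{n-1}})$ when $n-1\ge4$. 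When $n-1=3$ it lies in $\cV(\det X[\{v_1,v_2,v_3\}])$ by rank. So $g(\xi_0)=0$. The condition $\det(u_1,u_{n-1})\neq0$ defines a nonempty Zariski-open, hence dense, subset of the irreducible set $\Omega_n$, so restricting to it is harmless.

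The third step, which I expect to be the main subtlety, is passing from a common root to $F(a)=0$. The resultant of the specialized polynomials equals the specialization of $\operatorname{Res}_\xi(f,g)$ only if the leading coefficients in $\xi$ do not both drop. Expanding the determinant gives $\operatorname{lc}_\xi f\doteq x_{nn}$, with $\deg_\xi f=2$. By the induction hypothesis \eqref{eq:cycle-edge-degree-leading} for $C_{n-1}$, $\operatorname{lc}_\xi g\doteq\prod_{k\ne1,n-1,\,k\le n-1}x_{kk}^{2^{n-4}}$, with $\deg_\xi g=2^{n-3}$. Both are nonzero whenever all $u_k^Tu_k\neq0$, which is again a nonempty Zariski-open condition on $\Omega_n$.

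On this dense open subset of $\Omega_n$, the specialized $f$ and $g$ keep their degrees and share the root $\xi_0$. Hence $F(a)=\operatorname{Res}_\xi(f(a),g(a))=0$. Since $\Phi_n$ is continuous and $\Omega_n$ is irreducible, the image of a dense open subset has the same Zariski closure as $\Phi_n(\Omega_n)$. Taking closures gives $\cV(\rho_{C_n})\subseteq\cV(F)$.
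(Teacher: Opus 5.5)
Your proof is correct and follows the paper's argument. Like the paper, you restrict to the dense open subset where $\det(u_1,u_{n-1})\neq0$ and observe that $\xi_0=u_1^Tu_{n-1}$ is a common root of $f$ and $g$ at $\Phi_n(U)$. Your third step is harmless but unnecessary: $\operatorname{Res}_\xi(f,g)=Af+Bg$ for some $A,B\in R_{C_n}[\xi]$, so a common zero $(a,\xi_0)$ already forces $F(a)=0$ without any control of leading coefficients, which matters only for the converse direction.
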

\begin{proof}
In view of~\eqref{eq:cycle-gram-closure},
it suffices to show that 
$\Phi_n(\Omega_n)\subseteq \cV_{A_{C_n}}(F)$.
Moreover, $\{U\in\Omega_n : \det(u_1,u_{n-1})\neq 0\}$ 
is a nonempty Zariski open subset of $\Omega_n$,
and hence it suffices to show that 
$U\in\Omega_n$ with $\det(u_1,u_{n-1})\neq0$ satisfies  
$\Phi_n(U)\in \cV_{A_{C_n}}(F)$.
To see this, observe that 
$U_f:=(u_1,u_{n-1},u_n)\in \Omega_3$ and 
$U_g:=(u_1,u_2,\dots, u_{n-1})\in \Omega_{n-1}$,
and hence $(\Phi_n(U), u_1^T u_{n-1})$ is a common solution of $f$ and $g$
in $A_{C_n+\{v_1,v_{n-1}\}}$.
Hence $F(\Phi_n(U))=0$ as required.
\end{proof}
\begin{claim}\label{claim:cycle2}
For $i=1,\dots,n-2$,
$\deg_{x_{i,i+1}} F=2^{n-2}$ and 
$\operatorname{lc}_{x_{i,i+1}} F
\doteq\prod_{\substack{1\le k\le n\\k\notin\{i,i+1\}}}x_{kk}^{\,2^{n-3}}$.
\end{claim}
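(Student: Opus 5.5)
The plan is to compute $F=\operatorname{Res}_\xi(f,g)$ through the Poisson product formula for resultants. Because $f$ is only quadratic in $\xi$, this writes $F$ as a product of two specializations of $g$, and both the degree and the leading coefficient of $F$ in $x_{i,i+1}$ can then be read off from the induction hypothesis for $g=\rho_{C_{n-1}}$.

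\textbf{Step 1: the shape of $f$ and its roots.} Expanding the $3\times 3$ determinant gives
\[
f=-x_{nn}\,\xi^2+2x_{1n}x_{n-1,n}\,\xi+x_{11}x_{n-1,n-1}x_{nn}-x_{11}x_{n-1,n}^2-x_{n-1,n-1}x_{1n}^2 .
\]
So $\deg_\xi f=2$ and $\operatorname{lc}_\xi f=-x_{nn}\neq0$. Fix $i\in\{1,\dots,n-2\}$. Let $\Bbbk$ be an algebraic closure of the fraction field of the ring generated by all variables of $R_{C_n}$ other than $x_{i,i+1}$. The polynomial $f$ does not involve $x_{i,i+1}$, since $\{v_i,v_{i+1}\}$ is neither the edge $\{v_1,v_n\}$ nor the edge $\{v_{n-1},v_n\}$. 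Hence over $\Bbbk$ we may factor
\[
f=-x_{nn}(\xi-\alpha)(\xi-\beta)\qquad\text{with }\alpha,\beta\in\Bbbk .
\]

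\textbf{Step 2: the relevant data of $g$.} Apply the induction hypothesis \eqref{eq:cycle-edge-degree-leading} to $C_{n-1}$ in two ways.
\begin{itemize}
\item For the edge $\{v_{n-1},v_1\}$ it gives $\deg_\xi g=2^{n-3}$, with a nonzero leading coefficient.
\item For the edge $\{v_i,v_{i+1}\}$, which is an edge of $C_{n-1}$ because $i\le n-2$, it gives
\[
g=c\,x_{i,i+1}^{2^{n-3}}+(\text{lower terms in }x_{i,i+1}),\qquad c\doteq\prod_{k\in[n-1]\setminus\{i,i+1\}}x_{kk}^{2^{n-4}} .
\]
\end{itemize}
The key observation is that $c$ is a monomial in diagonal variables only, so it does not involve $\xi=x_{1,n-1}$.

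\textbf{Step 3: the Poisson formula.} Use the resultant with formal degrees $2$ and $2^{n-3}$, which are the actual degrees in $\xi$. Over $\Bbbk[x_{i,i+1}]$ the Poisson formula gives
\[
F=\pm(-x_{nn})^{2^{n-3}}\,g(\alpha)\,g(\beta),
\]
where $g(\alpha)$ denotes $g$ with $\xi$ replaced by $\alpha$. Substituting $\xi=\alpha$ or $\xi=\beta$ leaves the coefficient $c$ of $x_{i,i+1}^{2^{n-3}}$ unchanged, because $c$ is $\xi$-free. Each of $g(\alpha)$ and $g(\beta)$ therefore has degree exactly $2^{n-3}$ in $x_{i,i+1}$, with leading coefficient $c\neq0$. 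Multiplying, we get
\[
\deg_{x_{i,i+1}}F=2^{n-2},\qquad \operatorname{lc}_{x_{i,i+1}}F\doteq x_{nn}^{2^{n-3}}c^2\doteq\prod_{k\in[n]\setminus\{i,i+1\}}x_{kk}^{2^{n-3}} .
\]
This leading coefficient lies in $R_{C_n}$, so the statement holds in the original polynomial ring and not just over $\Bbbk$.

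The only delicate point is Step 3. One must check that the passage to $\Bbbk$ is legitimate: the coefficient $\operatorname{lc}_\xi f=-x_{nn}$ is nonzero, and the formal degree of $g$ in $\xi$ is attained, so the Poisson formula applies without correction factors. One must also check that the leading term in $x_{i,i+1}$ survives the substitutions $\xi=\alpha$ and $\xi=\beta$. This second check is exactly where the $\xi$-independence of $c$ from the induction hypothesis is used.
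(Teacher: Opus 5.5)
Your proposal is correct and follows the paper's proof essentially verbatim: the product formula for the resultant gives $F\doteq x_{nn}^{2^{n-3}}g(\eta_1)g(\eta_2)$, and the $\xi$-independence of $\operatorname{lc}_{x_{i,i+1}}g$ from the induction hypothesis then yields the degree $2^{n-2}$ and the stated leading coefficient. Your choice to take the roots of $f$ in an algebraic closure of the field generated by the variables other than $x_{i,i+1}$ is a slightly cleaner way of making precise the paper's remark that these roots are independent of $x_{i,i+1}$.
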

\begin{proof}
Note that $f$ has degree two in $\xi=x_{1,n-1}$, 
with $\operatorname{lc}_{\xi} f \doteq x_{nn}$,
and the induction hypothesis gives $\deg_\xi g=2^{n-3}$.
Let $\eta_1,\eta_2$ be the roots of $f$ 
in an algebraic closure of the fraction field of $R_{C_n}$.
The product formula for the resultant (see, e.g.,\cite[Chapter~3, Section~1, formula~(1.4)]{CLOUsing}) gives
\begin{equation}\label{eq:F}
F\doteq(x_{nn})^{2^{n-3}} g(\eta_1)g(\eta_2).
\end{equation}
Fix $1\le i\le n-2$.
The polynomial $f$, and hence its roots, is independent of $x_{i,i+1}$.
By the induction hypothesis,
\[
\operatorname{lc}_{x_{i,i+1}}g
\doteq\prod_{\substack{1\le k\le n-1\\k\notin\{i,i+1\}}}x_{kk}^{\,2^{n-4}}.
\]
In particular, this leading coefficient is independent of $\xi$,
so substituting $\eta_1$ or $\eta_2$ for $\xi$ preserves the degree and leading coefficient in $x_{i,i+1}$.
Thus, \eqref{eq:F} and the induction hypothesis give
\begin{align*}
&\deg_{x_{i,i+1}} F=2(\deg_{x_{i,i+1}} g)=2^{n-2}, \\*
&\operatorname{lc}_{x_{i,i+1}}F
\doteq(x_{nn})^{2^{n-3}} \left(\operatorname{lc}_{x_{i,i+1}}g\right)^2
\doteq\prod_{\substack{1\le k\le n\\k\notin\{i,i+1\}}}x_{kk}^{\,2^{n-3}}.\qedhere
\end{align*}
\end{proof}

By Claim~\ref{claim:cycle1} and the irreducibility of $\rho_{C_n}$, we may write
$F=H\rho_{C_n}$ for some nonzero $H\in R_{C_n}$.
Lemma~\ref{lem:cycle-edge-degree-lower-bound} and Claim~\ref{claim:cycle2} imply
\[
2^{n-2}\le\deg_{x_{i,i+1}}\rho_{C_n}
\le\deg_{x_{i,i+1}}F=2^{n-2}.
\]
This implies that $H$ is independent of each common edge variable,
and comparing leading coefficients in $F=H\rho_{C_n}$ yields
\[
H\ \bigm|\ \prod_{\substack{1\le k\le n\\k\notin\{i,i+1\}}}x_{kk}^{2^{n-3}}
\qquad(1\le i\le n-2).
\]
The greatest common divisor of these monomials is $x_{nn}^{2^{n-3}}$.
Consequently, $H\doteq x_{nn}^{r}$ for some $0\le r\le 2^{n-3}$.
The leading coefficient of $F$ in $x_{23}$ has degree $2^{n-3}$ in $x_{11}$.
Since $H$ is independent of $x_{11}$, 
$F=H\rho_{C_n}$  and Claim~\ref{claim:cycle2} imply that
$\operatorname{lc}_{x_{23}}\rho_{C_n}$
has degree $2^{n-3}$ in $x_{11}$.
By the symmetry of $\rho_{C_n}$, $\operatorname{lc}_{x_{12}}\rho_{C_n}$ has degree $2^{n-3}$ in $x_{nn}$.
Comparing the $x_{nn}$-degrees in
$\operatorname{lc}_{x_{12}}F$ and 
$\operatorname{lc}_{x_{12}} H\rho_{C_n}$,
we obtain $r=0$.
Therefore $F\doteq\rho_{C_n}$.

By the symmetry of $\rho_{C_n}$, Claim~\ref{claim:cycle2} now gives the asserted degree and leading coefficient, up to a nonzero scalar, for every edge of $C_n$.
We may thus normalize $\rho_{C_n}$ by setting $\rho_{C_n}=F$, which proves the recurrence and completes the proof.
\end{proof}

\subsection{Iterated resultants along a triangulation}

Recall the iterated resultant defined in Subsection~\ref{subsec:iterated-resultants}.
The recurrence of Theorem~\ref{thm:cycle-resultant-recursion} gives the following corollary, which proves Theorem~\ref{thm:cycle-iterated-resultant}.

\begin{cor}
\label{cor:iterated-resultant-essential}
Let $n$ be an integer with $n\ge4$, and let $T$ be a triangulation of $C_n$.
For every ear-removal order,
$
\operatorname{ItrRes}(C_n)\doteq\rho_{C_n}.
$
Thus the iterated resultant is independent of the triangulation and the ear-removal order up to a nonzero scalar.
In particular,
\[
\cD(C_n)=
\bigcup_{e\in E(C_n)}
\cV_{A_{C_n}}(\det X[e])
\ \cup\ \cV_{A_{C_n}}\bigl(\operatorname{ItrRes}(C_n)\bigr).
\]
\end{cor}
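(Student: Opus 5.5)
The plan is an induction on $n$ along the chosen ear-removal order. For each step we invoke the recurrence of Theorem~\ref{thm:cycle-resultant-recursion}, after relabelling the cycle so that the ear being removed sits in the position of $v_n$.

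Fix a triangulation $T$ of $C_n$ and an ear-removal order. For $n=4$, the first ear is a triangle $\{v_s,v_i,v_t\}$ and the remaining cycle $C'$ is the triangle on the other three vertices, so $\operatorname{ItrRes}(C')=\det X[V(C')]=\rho_{C_3}$. For general $n\ge4$, let $v_i$ be the first vertex in the order, with ear $\{v_{i-1},v_i,v_{i+1}\}$. The ear vertex is adjacent on $C_n$ exactly to $v_{i-1}$ and $v_{i+1}$, so $s=i-1$ and $t=i+1$.

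Rotate the cyclic labels by $w_k:=v_{k+i}$ (indices mod $n$). Then $v_i=w_n$, $v_{i-1}=w_{n-1}$, $v_{i+1}=w_1$, and the chord $x_{st}$ becomes the chord $x_{1,n-1}$ in the new labels. The polynomial $\rho_{C_n}$ is defined intrinsically as an irreducible defining polynomial of $\cR(C_n)$, so it does not depend on how the vertices are labelled, up to a scalar. Hence Theorem~\ref{thm:cycle-resultant-recursion}, applied in the $w$-labelling, gives
\[
\rho_{C_n}\doteq\operatorname{Res}_{x_{st}}\bigl(\det X[\{v_s,v_i,v_t\}],\,\rho_{C'}\bigr),
\]
where $C'$ is the cycle obtained by deleting $v_i$ and adding the chord $\{v_s,v_t\}$. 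It remains to relate $\rho_{C'}$ to $\operatorname{ItrRes}(C')$:
\begin{itemize}
\item If $|C'|\ge4$, the induction hypothesis applied to $C'$ with the triangulation $T'$ and the remaining order gives $\operatorname{ItrRes}(C')\doteq\rho_{C'}$.
\item If $|C'|=3$, this holds by definition.
\end{itemize}
The resultant is multiplicative in scalar multiples of its arguments, so $\operatorname{Res}(f,cg)=c^{\deg f}\operatorname{Res}(f,g)$. Replacing $\rho_{C'}$ by $\operatorname{ItrRes}(C')$ therefore changes the resultant only by a nonzero scalar, and we obtain $\operatorname{ItrRes}(C_n)\doteq\rho_{C_n}$.

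The step needing the most care is the bookkeeping in this relabelling. The resultant is taken with respect to the chord variable that is specific to the ear, and the identification of $C'$ with a cycle $C_{n-1}$ in the standard labelling has to be consistent with the cyclic orientation. Reflections are harmless, since the statement depends only on the cycle graph. It also matters that the degree of $f$ in $\xi$ is exactly $2$, with leading coefficient $x_{ii}$ for the ear vertex. This keeps the resultant nondegenerate under the rescaling, and it matches the setup in the proof of Theorem~\ref{thm:cycle-resultant-recursion}.

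Independence of the triangulation and the ear-removal order then follows at once, because $\rho_{C_n}$ makes no reference to either choice. Finally, combine this with $\cD(C_n)=\cKD(C_n)\cup\cR(C_n)$, where $\cKD(C_n)=\bigcup_e\cV(\det X[e])$ because the maximal cliques of $C_n$ for $n\ge4$ are its edges, and with $\cR(C_n)=\cV(\rho_{C_n})$ from Proposition~\ref{prop:cycle-essential-hypersurface}. This yields the displayed formula for $\cD(C_n)$, and with it Theorem~\ref{thm:cycle-iterated-resultant}, including the irreducibility of $\operatorname{ItrRes}(C_n)$.
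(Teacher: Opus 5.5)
Your proposal is correct and follows essentially the same route as the paper: induction along the ear-removal order, applying the recurrence of Theorem~\ref{thm:cycle-resultant-recursion} after relabeling so that the first ear becomes $\{v_{n-1},v_n,v_1\}$, replacing $\rho_{C'}$ by $\operatorname{ItrRes}(C')$ up to a scalar, and then reading off $\cD(C_n)$ from $\cD(C_n)=\cKD(C_n)\cup\cR(C_n)$ and Proposition~\ref{prop:cycle-essential-hypersurface}. Your explicit cyclic relabeling and your use of $\operatorname{Res}(f,cg)=c^{\deg f}\operatorname{Res}(f,g)$ spell out steps the paper leaves implicit.
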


\begin{proof}
We proceed by induction on the number of vertices, with the triangle case following from the definition of $\rho_{C_3}$.
Let $\{v_s,v_i,v_t\}$ be the first ear in the chosen order, and let $C'$ and $T'$ be obtained by removing its ear vertex $v_i$.
By induction, $\operatorname{ItrRes}(C')\doteq\rho_{C'}$.
The recursive definition and Theorem~\ref{thm:cycle-resultant-recursion}, applied after relabeling, give
\[
\operatorname{ItrRes}(C_n)
\doteq\operatorname{Res}_{x_{st}}\!\left(
\det X[\{v_s,v_i,v_t\}],\,\rho_{C'}
\right)
\doteq\rho_{C_n}.
\]
Since $\rho_{C_n}$ is an irreducible defining polynomial of $\cR(C_n)$ for $n\ge4$, the identity above proves Theorem~\ref{thm:cycle-iterated-resultant}.
The formula for $\cD(C_n)$ follows from Proposition~\ref{prop:max-clique-component} and the definition of $\cR(C_n)$.
\end{proof}

\begin{remark}
The same iterated resultant formula 
and the degree formula  are obtained for $\partial_a\mathscr C(C_n)$ in \cite{LaurentMascarinTelen}.
Their proof is based on an interesting connection between
$\partial_a\mathscr C(C_n)$ and Lissajous varieties,
whereas our proof derives these formulas directly from the stress
characterization of cycle degeneracy.
\end{remark}

\section{MC discriminants of $K_4$-minor-free graphs \mbox{and cycle-completable graphs}}
\label{sec:K4-atoms}
In this section, we prove Theorem~\ref{thm:main}.
Subsection~\ref{subsec:cycle-lifting} shows that the essential discriminant of each chordless cycle of a $K_4$-minor-free graph lifts to the MC discriminant of the whole graph.
Subsection~\ref{subsec:sp-reconstruction} introduces SP-trees and develops reconstruction arguments for stresses and completions.
Then Subsection~\ref{sec:K4} combines these results with the clique-sum formula to prove the main theorem.

\subsection{Lifting the cycle essential discriminant to the whole graph}
\label{subsec:cycle-lifting}

We begin with the following theorem, which shows that, for a $K_4$-minor-free graph, the inverse image of the essential discriminant of each chordless cycle of length at least four is contained in the MC discriminant of the whole graph.

\begin{thm}
\label{thm:L}
Let $G$ be a $K_4$-minor-free graph. For every chordless cycle $C\subset G$,
\[
 p_{G,C}^{-1}\cR(C)\subset \cD(G).
\]
\end{thm}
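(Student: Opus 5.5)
We prove the inclusion by an extension argument, following the template of Proposition~\ref{prop:essential-extension}. Since $p_{G,C}$ is a coordinate projection and $\cR(C)=\overline{\Phi_m(\Omega_m)}^{\mathrm{Zar}}$ with $m=|V(C)|$ (Proposition~\ref{prop:cycle-essential-hypersurface}), we have $p_{G,C}^{-1}\cR(C)=\overline{p_{G,C}^{-1}(W)}^{\mathrm{Zar}}$ for any Zariski dense subset $W\subseteq\Phi_m(\Omega_m)$. It therefore suffices to show the following: for a generic $U=(u_v)_{v\in V(C)}\in\Omega_m$ and a generic partial matrix $a\in A_G$ with $p_{G,C}(a)=\Phi_m(U)$, the partial matrix $a$ has a completion $X$ with $X[V(C)]=U^TU$ and $X\widetilde Z=O$. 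Here $Z$ is the nonzero $U$-stress on $C$ given by Proposition~\ref{prop:cycle-rank2}, and $\widetilde Z$ is its zero extension to $V(G)$. Since $\widetilde Z\in\S_G\setminus\{O\}$, this gives $a\in\Sigma(G)$.

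Because $\widetilde Z$ is supported on $V(C)$, the condition $X\widetilde Z=O$ only involves the columns of $X$ indexed by $V(C)$. For $i\in V(C)$ it holds because $X[V(C)]=U^TU$ and $UZ=O$. For each vertex $w\notin V(C)$ we need the row vector $y_w:=X[w,V(C)]$ to lie in the left kernel of $Z$ restricted to the column space it acts on. Concretely, $y_w Z=0$ holds whenever $y_w=c_w^TU$ for some $c_w\in\C^2$, because $UZ=O$. The entries $(y_w)_v$ with $wv\in E(G)$ are prescribed by $a$, and all other entries are free. The other blocks of $X$, namely $X[V(G)\setminus V(C)]$, are unconstrained apart from $a$. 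So the whole problem reduces to a single requirement: for every $w\notin V(C)$, there is a $c_w\in\C^2$ with $c_w^Tu_v=a_{wv}$ for all neighbours $v$ of $w$ on $C$.

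This is where $K_4$-minor-freeness and chordlessness enter. If $w$ has at most two neighbours on $C$, then the system for $c_w$ is solvable whenever those neighbour vectors are linearly independent, and this holds for generic $U$ (adjacent vertices of $C$ already satisfy $\det(u_i,u_{i+1})\neq0$). The key combinatorial fact is that a vertex $w\notin V(C)$ cannot have three neighbours on $C$: the cycle together with $w$ and three spokes contracts to a $K_4$ minor. So the direct approach works for single vertices.

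The main obstacle is that a vertex outside $C$ may reach $C$ through paths, not just direct edges. Attaching rows by separate choices of $c_w$ is harmless because the entries among off-cycle vertices are free or prescribed independently. However, if I require all the $y_w$ to be of the form $c_w^TU$, each $w$ is only constrained by its direct neighbours on $C$, so the argument above already suffices. I will nevertheless double-check genericity: $a$ must range over a dense subset of the fibre $p_{G,C}^{-1}(\Phi_m(U))$. The entries $a_{wv}$ for $w$ with at most two neighbours $v$ on $C$ are arbitrary, and they are matched exactly by solving for $c_w$, using the invertibility of the $2\times2$ matrix $(u_v,u_{v'})$ on the open set where it is nonzero. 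Edges not incident to $C$ are matched directly. This yields $p_{G,C}^{-1}(\Phi_m(U'))\subseteq\Sigma(G)$ for $U'$ in a dense open subset of $\Omega_m$. Taking Zariski closures then gives the theorem. The delicate point to verify carefully is the minor argument, namely that at most two neighbours lie on $C$, and in particular that it remains valid when $C$ has length $4$ and the neighbours are arbitrary vertices of $C$.
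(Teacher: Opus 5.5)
Your proposal is correct and follows essentially the same route as the paper: restrict to a dense open set of Gram configurations $U$, put $U^TU$ on $V(C)$, use the $K_4$-minor argument to show each outside vertex has at most two neighbours on $C$, and take each outside row to be a combination $c_w^TU$ of the rows of $U$ (the paper writes this as $(a_{is},a_{it})\,Y_C[S_i]^{-1}Y_C[S_i,V(C)]$), so that the zero-extended cycle stress annihilates $X$. The one addition needed is to handle the triangle case separately, where $\cR(K_3)=\varnothing$ makes the inclusion trivial, because the description $\cR(C)=\overline{\Phi_m(\Omega_m)}^{\mathrm{Zar}}$ and Proposition~\ref{prop:cycle-rank2} are only available for $m\ge4$.
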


\begin{proof} 
If $C=K_3$, then $\cR(C)=\varnothing$, so there is nothing to prove. Assume $|V(C)|\ge4$.
For each $i\in V(G)\setminus V(C)$, let $S_i$ be the set of vertices in $V(C)$ adjacent to $i$ in $G$.

\begin{claim}
\label{claim:cycle-neighbor-bound}
For every $i\in V(G)\setminus V(C)$, we have $|S_i|\le2$.
\end{claim}
\begin{proof}
Suppose that some $i\in V(G)\setminus V(C)$ is adjacent to three distinct vertices $s,t,r$ of $C$.
The contraction of the three paths of $C$ with endvertices $s,t,r$ to edges results in a triangle on $s,t,r$.
Since $i$ is adjacent to all three vertices, this yields a $K_4$-minor, a contradiction.
\end{proof}

Let
\[
\Omega':=\left\{
U=(u_i)_{i\in V(C)}\in(\C^2)^{V(C)}:
\det(u_i,u_j)\neq0\ \text{for all distinct }i,j
\right\},
\]
and define $\Phi_C(U):=p_C(U^TU)$ for $U\in\Omega'$.
After relabeling the vertices of $C$, the set $\Omega'$ is a nonempty Zariski open subset of $\Omega_{|V(C)|}$, and hence is dense in it.
By Proposition~\ref{prop:cycle-essential-hypersurface} and the density of $\Omega'$ in $\Omega_{|V(C)|}$, it suffices to show
$
p_{G,C}^{-1}\bigl(\Phi_C(\Omega')\bigr)\subseteq\Sigma(G)$.

Let $a\in p_{G,C}^{-1}\bigl(\Phi_C(\Omega')\bigr)$.
Then there exists
$U=(u_j)_{j\in V(C)}\in\Omega'$ such that
$p_{G,C}(a)=p_C(U^TU)$. Let $Y_C:=U^TU$. 
Since $\rank Y_C = 2$, Proposition~\ref{prop:cycle-rank2}
implies that there exists $Z_C\in \S_{C}\setminus\{O\}$ such that
$Y_CZ_C=O$. Let $\widetilde Z_C$ be the extension of $Z_C$ by zero to a matrix in $\S_{G}$.
Now we construct a completion $X$ of $a$ satisfying $X\widetilde Z_C=O$. First, set
$X[V(C)]:=Y_C$. Let $i\in V(G)\setminus V(C)$. By Claim~\ref{claim:cycle-neighbor-bound}, we have $|S_i|\le2$.

If $|S_i|=2$, write $S_i=\{s,t\}$. Since $U\in\Omega'$, we have $\det Y_C[S_i]=\det(u_s,u_t)^2\neq0$, so $Y_C[S_i]$ is invertible. Using row vector
$
(a_{is},\ a_{it})
$, 
define
\[
X[i,V(C)]:=
\begin{pmatrix}
a_{is} & a_{it}
\end{pmatrix}
\,Y_C[S_i]^{-1}\,Y_C[S_i,V(C)].
\]
Then $X[i,S_i]=
\begin{pmatrix}
a_{is} & a_{it}
\end{pmatrix}
\,Y_C[S_i]^{-1}\,Y_C[S_i,S_i]=
(a_{is},\ a_{it})
$, so $X$ satisfies the prescribed entries by $a$ on $is$ and $it$. 
Moreover, $X[i,V(C)]$ is a linear combination of rows of $Y_C$, and hence
$X[i,V(C)]Z_C=O$.

If $|S_i|=1$, write $S_i=\{s\}$ and choose one neighbor $q$ of $s$ on $C$. Since $\{s,q\}\in E(C)$, 
we see that $Y_C[\{s,q\}]$ is invertible. Define
\[
X[i,V(C)]:=
\begin{pmatrix}
 a_{is} & 0
\end{pmatrix}
\,Y_C[\{s,q\}]^{-1}\,Y_C[\{s,q\},V(C)].
\]
Then $X[i,\{s,q\}]=(a_{is},0)$, so $X$ satisfies the prescribed entry by $a$ on $is$.
Again $X[i,V(C)]Z_C=O$.
If $|S_i|=0$, set $X[i,V(C)]:=0$. 

For the remaining entries of
$X[V(G)\setminus V(C),V(G)\setminus V(C)]$, insert the diagonal and edge entries prescribed by $a$ and choose the nonedge entries arbitrarily. 
Then $X[V(G),V(C)]Z_C=O$, and consequently $X\widetilde Z_C=O$. 
Therefore, $a = p_G(X)\in\Sigma(G)$.
\end{proof}


\subsection{Reconstructing stresses and completions using SP-trees}
\label{subsec:sp-reconstruction}

This subsection develops two reconstruction arguments (Lemma~\ref{lem:2} and Lemma~\ref{prop:9}) that form a central part of the proof of our main theorem.
The combinatorial tool underlying these arguments is the SP-tree.
We first introduce SP-trees and their relevant properties, and then prove the reconstruction results.

\subsubsection{SP-trees}
\label{subsec:sp-tree}

SP-trees describe the decomposition of biconnected series--parallel graphs (equivalently, biconnected $K_4$-minor-free graphs) into cycles and multigraphs on two vertices.
They are the S/P specialization of SPQR-trees; see~\cite{ChimaniHlineny,KriegeKurpiczMutzel}.
We use an equivalent formulation in which all skeletons are subgraphs of a common multigraph $H$.

\begin{defn}
\label{def:sp-tree}
Let $G=(V,E)$ be a biconnected $K_4$-minor-free graph with $|V|\ge3$.
An \emph{SP-tree} of $G$ consists of a finite tree $\mathcal T$, a loopless multigraph $H$ containing $G$, with
\[
V(H)=V,\qquad
E(H)=E\sqcup\{e_{\mu\nu}:\mu\nu\in E(\mathcal T)\},
\]
and a subgraph $\operatorname{skel}(\mu)$ of $H$, called the \emph{skeleton} of $\mu$, for each node $\mu\in V(\mathcal T)$.
The edges in $E$ are called \emph{real}, and the added edges $e_{\mu\nu}$ are called \emph{virtual}.
These data satisfy the following conditions.
\begin{enumerate}
\item Each node is either an \emph{S-node}, whose skeleton is a simple cycle of length at least three, or a \emph{P-node}, whose skeleton consists of two vertices joined by at least three parallel edges. Adjacent nodes have different types.
\item Each real edge belongs to exactly one skeleton, and each virtual edge $e_{\mu\nu}$ belongs to exactly the two skeletons $\operatorname{skel}(\mu)$ and $\operatorname{skel}(\nu)$.
\item For each tree edge $\mu\nu$, let $\mathcal T_\mu$ and $\mathcal T_\nu$ be the two components of $\mathcal T-\mu\nu$, containing $\mu$ and $\nu$, respectively. If $p,q$ are the endpoints of $e_{\mu\nu}$, then
\[
\left(\bigcup_{\lambda\in V(\mathcal T_\mu)}V(\operatorname{skel}(\lambda))\right)
\cap
\left(\bigcup_{\lambda\in V(\mathcal T_\nu)}V(\operatorname{skel}(\lambda))\right)
=\{p,q\}.
\]
\end{enumerate}
The two vertices of a P-skeleton are called its \emph{poles}.
\end{defn}

Every biconnected $K_4$-minor-free graph with at least three vertices admits an SP-tree; see~\cite[Section~4.1]{KriegeKurpiczMutzel}.
Figure~\ref{fig:spqr-example} shows $G$, the common multigraph $H$, and the tree $\mathcal T$ with its skeletons.


\definecolor{skelA}{RGB}{230,159,0}   
\definecolor{skelB}{RGB}{0,158,115}   
\definecolor{skelC}{RGB}{0,114,178}   
\definecolor{skelD}{RGB}{213,94,0}    
\definecolor{skelE}{RGB}{204,121,167} 

\begin{figure}[!htbp]
\centering
\resizebox{\textwidth}{!}{%
\begin{tikzpicture}[
  vertex/.style={
    circle,
    draw,
    fill=white,
    inner sep=1.2pt,
    minimum size=5mm
  },
  component/.style={
    draw=orange!80!black,
    rounded corners=3pt,
    line width=1.0pt
  },
  treeedge/.style={
    draw=black,
    line width=1.0pt
  },
  virtual/.style={
    draw=black,
    dash pattern=on 0pt off 3pt,
    line cap=round,
    line width=1.6pt
  },
  A/.style={draw=skelA,line width=2.0pt},
  B/.style={draw=skelB,line width=2.0pt},
  C/.style={draw=skelC,line width=2.0pt},
  D/.style={draw=skelD,line width=2.0pt},
  E/.style={draw=skelE,line width=2.0pt},
  baglabel/.style={
    font=\small,
    inner sep=0pt
  }
]


\begin{scope}[shift={(0,0)}]

  \node at (0,-6.9) {$G$};

  \node[vertex] (v1) at (0,-0.8) {$1$};
  \node[vertex] (v2) at (0,-6.2) {$2$};

  \node[vertex] (v3) at (-2.0,-3.5) {$3$};

  \node[vertex] (v4) at (-0.55,-2.15) {$4$};
  \node[vertex] (v5) at (-0.55,-4.45) {$5$};
  \node[vertex] (v6) at ( 0.70,-2.75) {$6$};
  \node[vertex] (v7) at ( 0.70,-3.85) {$7$};

  \node[vertex] (v8) at (2.0,-2.6) {$8$};
  \node[vertex] (v9) at (2.0,-4.4) {$9$};

  \draw[A] (v1) -- (v3) -- (v2);

  \draw[B] (v1) -- (v4);
  \draw[B] (v5) -- (v2);

  \draw[D] (v4) -- (v6) -- (v5);

  \draw[E] (v4) -- (v7) -- (v5);

  \draw[C] (v1) -- (v8) -- (v9) -- (v2);

\end{scope}


\begin{scope}[shift={(7.8,0)}]

  \node at (-0.8,-6.9) {$H$};

  \node[vertex] (h1) at (0,-0.8) {$1$};
  \node[vertex] (h2) at (0,-6.2) {$2$};
  \node[vertex] (h3) at (-2.0,-3.5) {$3$};
  \node[vertex] (h4) at (-0.55,-2.15) {$4$};
  \node[vertex] (h5) at (-0.55,-4.45) {$5$};
  \node[vertex] (h6) at (0.70,-2.75) {$6$};
  \node[vertex] (h7) at (0.70,-3.85) {$7$};
  \node[vertex] (h8) at (2.0,-2.6) {$8$};
  \node[vertex] (h9) at (2.0,-4.4) {$9$};

  \draw[A] (h1) -- (h3) -- (h2);
  \draw[B] (h1) -- (h4);
  \draw[B] (h5) -- (h2);
  \draw[D] (h4) -- (h6) -- (h5);
  \draw[E] (h4) -- (h7) -- (h5);
  \draw[C] (h1) -- (h8) -- (h9) -- (h2);

  \draw[virtual] (h1) .. controls (-3.5,-0.8) and (-3.5,-6.2) .. (h2);
  \draw[virtual] (h1) .. controls (-4.1,-0.8) and (-4.1,-6.2) .. (h2);
  \draw[virtual] (h1) .. controls (-4.7,-0.8) and (-4.7,-6.2) .. (h2);
  \draw[virtual] (h4) -- (h5);
  \draw[virtual] (h4) .. controls (-0.90,-2.8) and (-0.90,-3.8) .. (h5);
  \draw[virtual] (h4) .. controls (-1.25,-2.8) and (-1.25,-3.8) .. (h5);

\end{scope}


\begin{scope}[shift={(11.5,0)}]

  \node at (4.7,-8.9) {$\mathcal T$};

  \draw[treeedge] (4.7,-1.8) -- (1.2,-2.6);
  \draw[treeedge] (4.7,-1.8) -- (4.7,-2.6);
  \draw[treeedge] (4.7,-1.8) -- (8.2,-2.6);

  \draw[treeedge] (4.7,-4.4) -- (4.7,-4.8);

  \draw[treeedge] (4.7,-6.0) -- (2.6,-6.6);
  \draw[treeedge] (4.7,-6.0) -- (6.8,-6.6);

  \draw[component]
    (3.0,-1.8) rectangle (6.4,-0.7);

  \node[baglabel,anchor=south west]
    at (3.1,-0.6) {$P$};

  \node[vertex] (p1) at (3.8,-1.25) {$1$};
  \node[vertex] (p2) at (5.6,-1.25) {$2$};

  \draw[virtual] (p1) to[bend left=22]  (p2);
  \draw[virtual] (p1) -- (p2);
  \draw[virtual] (p1) to[bend right=22] (p2);

  \draw[component]
    (0.0,-4.2) rectangle (2.4,-2.6);

  \node[baglabel,anchor=south west]
    at (0.1,-2.5) {$S$};

  \node[vertex] (a1) at (0.6,-3.05) {$1$};
  \node[vertex] (a2) at (1.8,-3.05) {$2$};
  \node[vertex] (a3) at (1.2,-3.75) {$3$};

  \draw[virtual] (a1) -- (a2);
  \draw[A] (a1) -- (a3) -- (a2);

  \draw[component]
    (3.1,-4.35) rectangle (6.3,-2.6);

  \node[baglabel,anchor=south west]
    at (3.2,-2.5) {$S$};

  \node[vertex] (b1) at (3.7,-2.95) {$1$};
  \node[vertex] (b2) at (5.7,-2.95) {$2$};
  \node[vertex] (b4) at (3.8,-3.90) {$4$};
  \node[vertex] (b5) at (5.6,-3.90) {$5$};

  \draw[virtual] (b1) -- (b2);
  \draw[virtual] (b4) -- (b5);

  \draw[B] (b1) -- (b4);
  \draw[B] (b5) -- (b2);

  \draw[component]
    (7.0,-4.2) rectangle (9.4,-2.6);

  \node[baglabel,anchor=south west]
    at (9.0,-2.5) {$S$};

  \node[vertex] (c1) at (7.6,-3.05) {$1$};
  \node[vertex] (c2) at (8.8,-3.05) {$2$};
  \node[vertex] (c8) at (7.6,-3.75) {$8$};
  \node[vertex] (c9) at (8.8,-3.75) {$9$};

  \draw[virtual] (c1) -- (c2);
  \draw[C] (c1) -- (c8) -- (c9) -- (c2);

  \draw[component]
    (3.3,-6.0) rectangle (6.1,-4.8);

  \node[baglabel,anchor=south west]
    at (3.4,-4.7) {$P$};

  \node[vertex] (q4) at (4.0,-5.35) {$4$};
  \node[vertex] (q5) at (5.4,-5.35) {$5$};

  \draw[virtual] (q4) to[bend left=22]  (q5);
  \draw[virtual] (q4) -- (q5);
  \draw[virtual] (q4) to[bend right=22] (q5);

  \draw[component]
    (1.4,-8.2) rectangle (3.8,-6.6);

  \node[baglabel,anchor=south west]
    at (1.5,-6.5) {$S$};

  \node[vertex] (d4) at (2.0,-7.05) {$4$};
  \node[vertex] (d5) at (3.2,-7.05) {$5$};
  \node[vertex] (d6) at (2.6,-7.75) {$6$};

  \draw[virtual] (d4) -- (d5);
  \draw[D] (d4) -- (d6) -- (d5);

  \draw[component]
    (5.6,-8.2) rectangle (8.0,-6.6);

  \node[baglabel,anchor=south west]
    at (7.6,-6.5) {$S$};

  \node[vertex] (e4) at (6.2,-7.05) {$4$};
  \node[vertex] (e5) at (7.4,-7.05) {$5$};
  \node[vertex] (e7) at (6.8,-7.75) {$7$};

  \draw[virtual] (e4) -- (e5);
  \draw[E] (e4) -- (e7) -- (e5);

\end{scope}

\end{tikzpicture}%
}
\caption{A $K_4$-minor-free graph $G$ (left), the common multigraph $H$ (middle), and the SP-tree $\mathcal T$ (right). Real edges belonging to the same skeleton are shown in the same color.
Virtual edges are dotted.}
\label{fig:spqr-example}
\end{figure}

We record the properties needed below.

\begin{lemma}
\label{lem:sp-tree-properties}
Let $G$ be a biconnected $K_4$-minor-free graph with at least three vertices, and let $\cT$ be its SP-tree. Then the following hold.
\begin{enumerate}[(i)]
\item \label{sp:branches}
Let $\tau$ be a P-node with poles $p,q$, and let $\cT_1,\ldots,\cT_m$ be the connected components of $\cT-\tau$.
For each $i$, set
\[
H_i:=G\cap\left(\bigcup_{\mu\in V(\cT_i)}\operatorname{skel}(\mu)\right).
\]
Then 
$
V(H_i)\setminus\{p,q\}\ne\varnothing$, and
$
V(H_i)\cap V(H_j)=\{p,q\} \  (i\ne j)$.
Their union $\bigcup_{i=1}^m H_i$ is $G-pq$ if $pq\in E(G)$, and $G$ otherwise. In particular, $\{p,q\}$ is a $2$-separator of $G$.
\item \label{sp:p-degree}
Every P-node has degree at least two in $\cT$, and at least three if $G$ has no clique separator of size two.
\item \label{sp:leaf-path}
If $G$ is not a cycle, every leaf of $\cT$ is an S-node. For each such leaf $\mu$, the skeleton $\operatorname{skel}(\mu)$ consists of a unique virtual edge $pq$ and a real $p$--$q$ path of length at least two. The internal vertices of this path have degree two in $G$.
\item \label{sp:two-leaves}
If $G$ is not a cycle and has no clique separator of size two, some P-node is adjacent to at least two leaf S-nodes.
\item \label{sp:triangle-free}
If $G$ is a clique-sum atom other than $K_3$, then $G$ is triangle-free.
\end{enumerate}
\end{lemma}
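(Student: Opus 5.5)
The plan is to derive all five items from the three axioms of Definition~\ref{def:sp-tree} and standard facts about SP-trees, taking the items in order since later ones reuse earlier ones.

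\emph{Item (i).} Let $\tau$ be a P-node with poles $p,q$. Each component $\cT_i$ of $\cT-\tau$ is attached to $\tau$ through a unique tree edge $\tau\nu_i$, and the virtual edge $e_{\tau\nu_i}$ has endpoints $p,q$.
\begin{itemize}
\item Applying condition~(3) to $\tau\nu_i$ shows that the vertex set of $\cT_i$'s skeletons meets the rest only in $\{p,q\}$. This gives $V(H_i)\cap V(H_j)=\{p,q\}$ for $i\neq j$. (Vertices of skeletons are vertices of $G$, and every vertex lies on a real edge by biconnectivity, so the vertex sets of the $H_i$ are the skeleton vertex sets.)
\item $\operatorname{skel}(\nu_i)$ is an S-node, since adjacent nodes have different types. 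It is therefore a cycle of length at least three through $p,q$, so it contains a vertex outside $\{p,q\}$. Hence $V(H_i)\setminus\{p,q\}\neq\varnothing$.
\item By condition~(2), each real edge lies in exactly one skeleton. The only real edge that can lie in $\operatorname{skel}(\tau)$ is $pq$. So $\bigcup_i H_i$ is $G-pq$ or $G$, according as $pq\in E(G)$ or not.
\item Since $m\ge 2$ (item (ii) for degree $\ge2$, or directly), $\{p,q\}$ is a $2$-separator.
\end{itemize}

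\emph{Item (ii).} $\operatorname{skel}(\tau)$ has at least three parallel edges, and at most one of them is real. So at least two are virtual, each giving a distinct tree edge at $\tau$. If $G$ has no clique separator of size two, then $pq\notin E(G)$, because $\{p,q\}$ separates by (i). Then all $\ge3$ edges of $\operatorname{skel}(\tau)$ are virtual, and $\deg\tau\ge3$.

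\emph{Item (iii).} A leaf P-node would have degree one, contradicting (ii); if $\cT$ is a single node, then $G$ is a cycle. So every leaf $\mu$ is an S-node with exactly one virtual edge $pq$. Its remaining cycle edges are real and form a $p$--$q$ path of length at least two. An internal vertex $v$ of this path lies, by condition~(3), in no other skeleton, so all its $G$-edges are real edges of $\operatorname{skel}(\mu)$. Hence $\deg_G v=2$.

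\emph{Item (iv).} This is the main combinatorial step. Assume $G$ is not a cycle and has no clique separator of size two, so every P-node has degree $\ge3$. Delete all leaves of $\cT$; this gives a tree $\cT'$, nonempty since $\cT$ has a P-node.
\begin{itemize}
\item If $\cT'$ is a single node, that node is a P-node (a lone S-node would have only leaf neighbours, which are S-nodes, contradicting alternation). Its $\ge3$ neighbours are all leaf S-nodes.
\item Otherwise take a leaf $\lambda$ of $\cT'$. If $\lambda$ is a P-node, it has degree $\ge3$ in $\cT$ and only one non-leaf neighbour, so at least two of its neighbours are leaf S-nodes.
\item If $\lambda$ is an S-node, its leaf neighbours in $\cT$ would be P-nodes, which is impossible by (iii). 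So $\lambda$ has degree one in $\cT$, contradicting that it survived leaf deletion.
\end{itemize}
The case analysis on the type of a leaf of $\cT'$, using alternation, is the delicate part.

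\emph{Item (v).} Suppose $G$ is a clique-sum atom other than $K_3$ and contains a triangle $\{x,y,z\}$. Since $G$ is biconnected and not $K_3$, there is a fourth vertex $w$. In a $K_4$-minor-free graph, $w$ together with the triangle would produce a $K_4$ minor if there were three internally disjoint paths from $w$ to $x,y,z$. By Menger, some pair, say $\{x,y\}$, separates $w$ from $z$; such a pair is a clique separator (a $2$-cut since $x,y$ are adjacent), contradicting atomicity. The step to verify carefully is that the Menger separator can be taken inside the triangle; I would argue via the fan lemma, contracting a fan from $w$ onto the triangle.
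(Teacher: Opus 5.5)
Your arguments for (ii), (iii) and (iv) follow the paper's proof, including the leaf-deletion argument for (iv). There are two gaps, in (i) and in (v).

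\textbf{Gap in (i).} You conclude that $\bigcup_i H_i=G-pq$ whenever $pq\in E(G)$. Your reasoning only shows that $\operatorname{skel}(\tau)$ contains no real edge other than $pq$. Condition~(2) puts the real edge $pq$ in exactly one skeleton, but nothing you say stops that skeleton from lying inside some branch $\cT_i$; in that case the union would be all of $G$. The paper closes this step explicitly. Suppose the real edge $pq$ lies in $\operatorname{skel}(\mu)$ with $\mu\neq\tau$. Apply condition~(3) to each tree edge on the $\tau$--$\mu$ path. Both sides of each such edge contain $p$ and $q$, so every virtual edge on that path has endpoints $p,q$. Then the S-skeleton next to $\tau$ on this path contains two parallel $pq$-edges: two virtual ones, or one virtual edge and the real $pq$ if that S-node is $\mu$ itself. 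This contradicts simplicity of S-skeletons. Your $2$-separator claim does not use this fact, so (ii)--(iv) are unaffected.

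\textbf{Gap in (v).} This is the more serious problem: the argument does not go through as written. Menger's theorem for $w$ and $\{x,y,z\}$ only gives a set $T$ with $|T|\le 2$ and $w\notin T$ meeting every $w$--$\{x,y,z\}$ path. Nothing forces $T$ to lie in the triangle: it can be a $2$-cut entirely outside $\{x,y,z\}$ that cuts $w$ off. Such a $T$ need not be a clique, so no contradiction with atomicity follows. The fan lemma cannot repair this, because it produces a $3$-fan only under $3$-connectivity, which is exactly what is missing here.

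The idea you need is to work with a whole component rather than a single vertex. Let $C$ be a component of $G-\{x,y,z\}$, and let $N$ be its set of neighbours in the triangle.
\begin{enumerate}
\item Biconnectivity gives $|N|\ge 2$.
\item $|N|=3$ is impossible, since contracting the connected set $C$ to one vertex would give a $K_4$ minor.
\item Hence $N$ is an edge of the triangle, and it separates $C$ from the third vertex.
\end{enumerate}
This gives a nontrivial clique-sum over $N$, contradicting that $G$ is an atom. This is the paper's argument for (v).
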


\begin{proof}
For~(i) and~(ii), 
let $\tau$ be a P-node with poles $p,q$. 
By the condition (1), 
$\operatorname{skel}(\tau)$ consists of at least three parallel edges, 
at most one of which is real since $G$ is simple. 
Each virtual edge in $\operatorname{skel}(\tau)$ corresponds to a tree edge incident to $\tau$ in $\cT$,  
so $\tau$ has degree at least two in $\cT$ and has degree at least three if $G$ has no edge between $p$ and $q$. 
Since every real edge belongs to exactly one skeleton by condition (2), 
the union of the $H_i$ is either $G-pq$ or $G$.
Also, the condition (3) gives $V(H_i)\cap V(H_j)=\{p,q\}$ for $i\ne j$. 
Each $H_i$ also contains a vertex other than $p,q$ of the S-skeleton adjacent to $\tau$ in $\cT_i$
since this skeleton is a simple cycle.
Thus, $\{p,q\}$ is a $2$-separator of $G$, and $G$ has no edge $pq$ 
if $G$ has no clique separator of size two. 

It remains to show that, in the case where $pq\in E(G)$, the real edge $pq$ belongs to 
$\operatorname{skel}(\tau)$ (and hence the union of the $H_i$ is $G-pq$).
Suppose the real edge $pq$ belongs to a skeleton $\operatorname{skel}(\mu)$ other than $\operatorname{skel}(\tau)$.
Then, consider the path in $\cT$ from $\tau$ to $\mu$.
Applying the condition (3) to each tree edge of this path,
we see that the virtual edge corresponding to each tree edge has endpoints $p$ and $q$.
In particular,  the S-skeleton next to $\tau$ in this path contains 
two parallel edges between $p$ and $q$.
This contradicts the fact that each S-skeleton is a simple cycle.

For~(iii), every leaf is an S-node by~(ii). 
Deleting its unique virtual edge leaves a real path of length at least two. 
By condition (3), this path meets the rest of $G$ only at its endpoints, 
so its internal vertices have degree two.

For~(iv), consider deleting all leaf S-nodes to obtain a nonempty tree $\cT'$. 
Every remaining S-node retains its degree, which is at least two. 
Thus a leaf of $\cT'$, or its only node if $\cT'$ is a singleton, is a P-node. 
Its degree in $\cT$ was at least three by~(ii), so at least two of its neighbors were deleted,
meaning that the P-node is adjacent to at least two leaf S-nodes.

For~(v), suppose that $G$ contains a triangle $\Delta$. 
Since $G\ne K_3$, choose a component $C$ of $G-V(\Delta)$. 
It has exactly two neighbors in $\Delta$ by biconnectivity and the $K_4$-minor-free property. 
Then these two neighbors  form a clique separator separating $C$ from the third vertex of $\Delta$.
\end{proof}

\subsubsection{Reconstructing stresses and completions}

For an $X$-stress $Z$, its \emph{vertex support} is defined by
$V(\supp Z):=\{i\in V:Z_{ij}\ne0\text{ for some }j\in V\}$.
An $X$-stress is said to have \emph{full support} if $V(\supp Z)=V$.
A nonzero $X$-stress $Z$ is said to be \emph{vertex-minimal} if no nonzero $X$-stress has vertex support properly contained in $V(\supp Z)$.
%
%

For $X\in\S^V$, we say that a pair $\{p,q\}\subseteq V$ \emph{collapses in $X$} if the $p$-th and $q$-th columns of $X$ are linearly dependent. If $X=U^TU$ is a minimal-rank Gram representation, this is equivalent to the linear dependence of $u_p,u_q$, since $U^T$ is injective. In particular, this condition is independent of the choice of a minimal-rank Gram representation.

For a biconnected $K_4$-minor-free graph $G$ with at least three vertices and no clique separator of size two, the subgraphs $H_i$ in Lemma~\ref{lem:sp-tree-properties} satisfy the assumptions of the following lemma: their poles are nonadjacent by~Lemma~\ref{lem:sp-tree-properties}(\ref{sp:branches}), and $m\ge3$ by~Lemma~\ref{lem:sp-tree-properties}(\ref{sp:p-degree}). We state the lemma for an arbitrary graph with these properties.

\begin{lemma}
\label{lem:2}
Suppose that a graph $G=(V,E)$ is the union of subgraphs $H_1,\ldots,H_m$, $m\ge3$, having the same two vertices $p,q$ such that
\[
V(H_\ell)\cap V(H_{\ell'})=\{p,q\}\quad(\ell\neq\ell'),
\quad
V(H_\ell)\setminus\{p,q\}\neq\varnothing,
\quad
\{p,q\}\notin E(G).
\]
Let $X\in\S^V$, and let $Z\in\S_G\setminus\{O\}$ be a full-support $X$-stress. If $\{p,q\}$ does not collapse in $X$, then there exists a nonzero $X$-stress in $\S_G$ with strictly smaller vertex support.
\end{lemma}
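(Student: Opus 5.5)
The plan is to split $Z$ along the $2$-separator $\{p,q\}$ into pieces supported on the individual $H_\ell$. We then look for a nontrivial recombination of these pieces in which the coefficient of at least one piece vanishes. Fix a minimal-rank Gram representation $X=U^TU$ (Lemma~\ref{lem:1}), so that $Z$ is a $U$-stress. Since $\{p,q\}\notin E(G)$, we have $Z_{pq}=0$, and there are no edges between $V(H_\ell)\setminus\{p,q\}$ and $V(H_{\ell'})\setminus\{p,q\}$ for $\ell\ne\ell'$.

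\textbf{Splitting.} For each $\ell$, let $Z^\ell\in\S_G$ be the matrix that agrees with $Z$ on all entries indexed by pairs in $V(H_\ell)$, except that its $(p,p)$ and $(q,q)$ entries are set to $0$; all other entries are $0$. Then
\[
Z=\sum_\ell Z^\ell+Z_{pp}E_{pp}+Z_{qq}E_{qq}.
\]
Each $Z^\ell$ is nonzero, since $Z$ has full support and $V(H_\ell)\setminus\{p,q\}\ne\varnothing$. For an interior vertex $i$ of $H_\ell$, the $i$-th column of $UZ^\ell$ equals the $i$-th column of $UZ$, which is $0$. So $UZ^\ell$ is supported on the two columns $p,q$; call these columns $\alpha_\ell,\beta_\ell\in\C^r$. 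The equilibrium equations at $p$ and $q$ read
\[
\sum_\ell\alpha_\ell=-Z_{pp}u_p,\qquad \sum_\ell\beta_\ell=-Z_{qq}u_q.
\]
For coefficients $c\in\C^m$, the matrix $\sum_\ell c_\ell Z^\ell+sE_{pp}+tE_{qq}$ is a $U$-stress for suitable $s,t$ exactly when
\[
\sum_\ell c_\ell\alpha_\ell\in\Span\{u_p\}\quad\text{and}\quad \sum_\ell c_\ell\beta_\ell\in\Span\{u_q\}.
\]
Suppose $c$ satisfies this, is not proportional to $(1,\dots,1)$, and has some $c_{\ell_0}=0$ with some $c_\ell\ne0$. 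Then the resulting stress is nonzero, because the $Z^\ell$ have disjoint interior supports. Its vertex support misses the interior of $H_{\ell_0}$, as desired. Since $(1,\dots,1)$ is always a solution, it suffices to show that the solution space of these linear conditions in $c$ has dimension at least $2$. A two-dimensional space of solutions always contains a nonzero vector with a prescribed coordinate equal to $0$.

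\textbf{Key step (main obstacle).} I expect the hard part to be controlling $\alpha_\ell,\beta_\ell$ modulo $u_p,u_q$. The target is the claim that
\[
\alpha_\ell\equiv\lambda_\ell u_q\pmod{u_p},\qquad \beta_\ell\equiv\lambda_\ell u_p\pmod{u_q}
\]
with the same scalar $\lambda_\ell$. Here the non-collapse of $\{p,q\}$ guarantees that $u_p,u_q$ are independent, so $\lambda_\ell$ is well defined. Granting the claim, the constraints collapse to the single linear equation $\sum_\ell c_\ell\lambda_\ell=0$ in $m\ge3$ unknowns. The solution space then has dimension at least $m-1\ge2$, and we are done.

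To prove the claim, I would first try to show $\alpha_\ell,\beta_\ell\in\Span\{u_p,u_q\}$. The idea is to use the symmetric matrix $Z^\ell XZ^\ell=(UZ^\ell)^T(UZ^\ell)$, whose only nonzero columns are indexed by $p,q$, together with the equilibrium equations at interior vertices. This should show that $\alpha_\ell,\beta_\ell$ pair trivially against the part of the configuration orthogonal to $u_p,u_q$. If needed, I would argue inductively along the interior of $H_\ell$, in the manner of the propagation in the proof of Proposition~\ref{prop:cycle-rank2}. For the equality of the two coefficients, I would compare the entries $u_q^T\alpha_\ell=(XZ^\ell)_{qp}$ and $u_p^T\beta_\ell=(XZ^\ell)_{pq}$. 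I would use the symmetry identity $\tr\bigl(Z^\ell X Z^{\ell}\bigr)$-type relations, or equivalently the symmetry of $(Z^\ell)^TXZ^\ell$ restricted to rows and columns $p,q$.

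If the span claim fails in general, the fallback is a dimension count directly in the quotient $\C^r/\Span\{u_p,u_q\}$. The vectors $\alpha_\ell,\beta_\ell$ sum to zero there, and we need only one linear relation beyond the trivial one. Here $m\ge3$ must be used together with the symmetry relation.
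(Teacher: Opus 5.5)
Your splitting step is correct and matches the paper's construction. Your $\alpha_\ell,\beta_\ell$ are the paper's $x_\ell,y_\ell=U_{I_\ell}Z[I_\ell,S]$. Your target claim says exactly that $(\alpha_\ell,\beta_\ell)=(u_p,u_q)A_\ell$ with $A_\ell$ symmetric, with $\lambda_\ell$ its off-diagonal entry. Solving $\sum_\ell c_\ell\lambda_\ell=0$ over all $m$ branches is a harmless variant of the paper's choice of two branches with $\lambda_1c_1+\lambda_2c_2=0$, where $m\ge3$ guarantees that some interior is missed.

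\textbf{The gap.} That claim is the entire content of the lemma, and you leave it unproved. None of the routes you sketch would establish it.
\begin{itemize}
\item The matrix $Z^\ell XZ^\ell=(UZ^\ell)^T(UZ^\ell)$ is symmetric for trivial reasons. It only records $\alpha_\ell^T\alpha_\ell$, $\alpha_\ell^T\beta_\ell$, $\beta_\ell^T\beta_\ell$, so it says nothing about membership in $\Span\{u_p,u_q\}$.
\item The matrix $XZ^\ell$ is not symmetric, and $(XZ^\ell)_{qp}=u_q^T\alpha_\ell$ and $(XZ^\ell)_{pq}=u_p^T\beta_\ell$ differ in general. Writing $\alpha_\ell=a u_p+\lambda_\ell u_q$ and $\beta_\ell=\lambda_\ell u_p+d u_q$, they equal $aX_{pq}+\lambda_\ell X_{qq}$ and $\lambda_\ell X_{pp}+dX_{pq}$.
\item Propagation along the interior of $H_\ell$, as in Proposition~\ref{prop:cycle-rank2}, cannot work. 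Here $H_\ell$ is an arbitrary graph, and the interior vectors $u_i$ need not lie in $\Span\{u_p,u_q\}$; only the particular combinations $\alpha_\ell,\beta_\ell$ do.
\item The quotient fallback presupposes the claim. A dimension count in $\C^r/\Span\{u_p,u_q\}$ cannot produce the extra relation by itself. The needed input is precisely that $\alpha_\ell,\beta_\ell$ vanish in that quotient.
\end{itemize}

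\textbf{The missing idea.} It is a one-line identity: $UZ^\ell U^T$ is symmetric, and $UZ^\ell$ has nonzero columns only at $p$ and $q$. Hence
\[
\alpha_\ell u_p^T+\beta_\ell u_q^T=UZ^\ell U^T=(UZ^\ell U^T)^T=u_p\alpha_\ell^T+u_q\beta_\ell^T .
\]
The paper obtains the same identity by multiplying the interior equilibrium equations $U_{I_\ell}Z[I_\ell]+U_SZ[S,I_\ell]=O$ on the right by $U_{I_\ell}^T$ and using the symmetry of $Z[I_\ell]$.

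Now set $U_S=(u_p,u_q)$ and let $L$ be a left inverse of $U_S$, which exists because $\{p,q\}$ does not collapse. Multiplying the identity on the right by $L^T$ gives $(\alpha_\ell,\beta_\ell)=U_SA_\ell$ for some $2\times2$ matrix $A_\ell$. Substituting back gives $U_SA_\ell U_S^T=U_SA_\ell^TU_S^T$, and hence $A_\ell=A_\ell^T$. This is exactly your claim with $\lambda_\ell=(A_\ell)_{12}$. With this step supplied, the rest of your argument goes through.
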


\begin{proof}
Choose a minimal-rank Gram representation $X=U^TU$. By Lemma~\ref{lem:1}, we have $UZ=O$. Since $\{p,q\}$ does not collapse in $X$, the vectors $u_p,u_q$ are linearly independent.
Since $m\ge3$ and $Z$ has full vertex support, it suffices to construct a nonzero $U$-stress supported on
$V(H_1)\cup V(H_2)$.
Set $S:=\{p,q\},\ I_\ell:=V(H_\ell)\setminus S\ (\ell=1,2)$,
and $R:=V(G)\setminus(I_1\cup S\cup I_2)$.
We use the vertex order $(I_1,S,I_2,R)$.
Let $D_1,D_2$ be diagonal $2\times2$ matrices to be chosen below, and define\par
{\small %
$$
Z_1:=
\begin{pmatrix}
Z[I_1] & Z[I_1,S] & O & O\\
Z[S,I_1] & D_1 & O & O\\
O&O&O&O\\
O&O&O&O
\end{pmatrix},
\ 
Z_2:=
\begin{pmatrix}
O&O&O&O\\
O&D_2&Z[S,I_2]&O\\
O&Z[I_2,S]&Z[I_2]&O\\
O&O&O&O
\end{pmatrix}.
$$}%
Since $pq\notin E(G)$ and $D_1,D_2$ are diagonal, both $Z_1$ and $Z_2$
belong to $\S_{G}$.
The equilibrium equations for $Z$ at the vertices of $I_\ell$ give
\begin{equation}
U_{I_\ell}Z[I_\ell]+U_SZ[S,I_\ell]=O,
\qquad \ell=1,2,
\label{eq:branch-equilibrium}
\end{equation}
where $U_S=(u_p,u_q)$.
Therefore,
\begin{align*}
UZ_1 & = \bigl(O,\,U_{I_1}Z[I_1,S]+U_SD_1,\,O,\,O\bigr),\\
UZ_2 & = \bigl(O,\,U_{I_2}Z[I_2,S]+U_SD_2,\,O,\,O\bigr).
\end{align*}

Now we show that there exist diagonal matrices $D_1,D_2$ and $(\lambda_1,\lambda_2)\neq (0,0)$ such that
$
 U\left(
\lambda_1Z_1+\lambda_2Z_2
\right) = O$.
For $\ell=1,2$, write
$U_{I_\ell}Z[I_\ell,S]=(x_\ell,y_\ell)$.
Multiplying \eqref{eq:branch-equilibrium} on the right by $U_{I_\ell}^T$
and using the symmetry of $Z[I_\ell]$, we obtain
\begin{equation}
u_px_\ell^T+u_qy_\ell^T = x_\ell u_p^T+y_\ell u_q^T.
\label{eq:equilibrium-symmetry}
\end{equation}
Since $u_p,u_q$ are linearly independent, there exists a left inverse
$L$ of $U_S$ with $LU_S=I_2$.
Multiplying \eqref{eq:equilibrium-symmetry} on the right by $L^T$ shows
that $x_\ell,y_\ell\in\Span\{u_p,u_q\}$. Hence
$(x_\ell,y_\ell)=U_SA_\ell$
for some $2\times2$ matrix $A_\ell$
and thus 
\[
U_{I_\ell}Z[I_\ell,S]+U_SD_\ell=U_S(A_\ell + D_\ell). 
\]
Here, substituting $(x_\ell,y_\ell)=U_SA_\ell$ into \eqref{eq:equilibrium-symmetry} gives
$U_SA_\ell^TU_S^T=U_SA_\ell U_S^T$.
Multiplying on the left by $L$ and on the right by $L^T$ yields
$A_\ell^T=A_\ell$. Thus we may write
$A_\ell=
\left(
\begin{smallmatrix}
 a_\ell&c_\ell\\
 c_\ell&d_\ell
\end{smallmatrix}
\right)$.
Now set
$ D_\ell:=
\left(
\begin{smallmatrix}
 -a_\ell&0\\
 0&-d_\ell
\end{smallmatrix}
\right)\ (\ell=1,2)$.
Then
$$
U_S(A_\ell+D_\ell)
=
c_\ell(u_q,u_p).
$$
Consequently,
$$
UZ_1=
\bigl(O,\ c_1(u_q,u_p),\ O,\ O\bigr),
\quad
UZ_2=
\bigl(O,\ c_2(u_q,u_p),\ O,\ O\bigr).
$$
Choose $(\lambda_1,\lambda_2)\neq(0,0)$ satisfying
$\lambda_1c_1+\lambda_2c_2=0$,
and let
$Z':=\lambda_1Z_1+\lambda_2Z_2$.
Then $UZ'=O$ and $Z'\in\S_G$, so $XZ'=O$ and $Z'$ is an $X$-stress.
It remains to check that $Z'\neq O$. Choose $\ell\in\{1,2\}$ with
$\lambda_\ell\neq0$. Since $Z$ has full vertex support and $I_\ell$ is
nonempty, at least one entry of
$Z[I_\ell,I_\ell\cup S]$
is nonzero. This block occurs only in $Z_\ell$, and hence cannot be
canceled by the other matrix. Therefore $Z'\neq O$.
Since $Z'$ is supported on $I_1\cup S\cup I_2$,
we have
$V(\operatorname{supp}Z')\subsetneq V(G)$,
as required.
\end{proof}

The following corollary applies this lemma to SP-trees.

\begin{cor}
\label{cor:5}
Let $G=(V,E)$ be a biconnected $K_4$-minor-free clique-sum atom with at least three vertices, and let $\cT$ be an SP-tree of $G$. Let $X\in\S^V$. If there exists a vertex-minimal full-support $X$-stress $Z\in\S_G$, then $\{p,q\}$ collapses in $X$ for the poles $p,q$ of every P-node of $\cT$.
\end{cor}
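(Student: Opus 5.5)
The argument is by contradiction via Lemma~\ref{lem:2}. Fix a P-node $\tau$ of $\cT$ with poles $p,q$, and suppose $\{p,q\}$ does not collapse in $X$. Let $\cT_1,\ldots,\cT_m$ be the components of $\cT-\tau$, and let $H_1,\ldots,H_m$ be the corresponding subgraphs from Lemma~\ref{lem:sp-tree-properties}(\ref{sp:branches}). I will check that $G$ and $H_1,\ldots,H_m$ satisfy the hypotheses of Lemma~\ref{lem:2}.

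The hypotheses are verified as follows.
\begin{enumerate}[(a)]
\item By Lemma~\ref{lem:sp-tree-properties}(\ref{sp:branches}), $V(H_i)\cap V(H_j)=\{p,q\}$ for $i\neq j$, and $V(H_i)\setminus\{p,q\}\neq\varnothing$.
\item Since $G$ is a clique-sum atom, it has no clique separator at all, in particular none of size two. As $\{p,q\}$ is a $2$-separator of $G$ by Lemma~\ref{lem:sp-tree-properties}(\ref{sp:branches}), it is not a clique, so $pq\notin E(G)$.
\item Again by Lemma~\ref{lem:sp-tree-properties}(\ref{sp:branches}), this gives $G=\bigcup_i H_i$.
\item By Lemma~\ref{lem:sp-tree-properties}(\ref{sp:p-degree}), $\tau$ has degree at least three, so $m\ge3$.
\end{enumerate}

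With these hypotheses in place, Lemma~\ref{lem:2} applied to the full-support $X$-stress $Z$ produces a nonzero $X$-stress in $\S_G$ whose vertex support is strictly smaller than $V=V(\supp Z)$. This contradicts the vertex-minimality of $Z$, so $\{p,q\}$ collapses in $X$.

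The only delicate point I expect is the justification that $pq\notin E(G)$. I would spell it out this way: if $pq$ were an edge, then $\{p,q\}$ would be a clique $2$-separator of $G$, giving a nontrivial clique-sum decomposition $G=G_1+_{\{p,q\}}G_2$ with both sides containing vertices outside $\{p,q\}$. That contradicts $G$ being a clique-sum atom. Everything else is a direct quotation of the listed SP-tree properties.
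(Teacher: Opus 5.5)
Your proposal is correct and is essentially the paper's proof: the components of $\cT-\tau$ give the decomposition required by Lemma~\ref{lem:2} via Lemma~\ref{lem:sp-tree-properties}(\ref{sp:branches}) and (\ref{sp:p-degree}), and a non-collapsing pole pair would yield a nonzero stress with strictly smaller vertex support, contradicting vertex-minimality. Your explicit argument that a clique-sum atom has no clique $2$-separator, so that $pq\notin E(G)$ and $G=\bigcup_i H_i$, spells out a step the paper leaves implicit.
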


\begin{proof}
Let $\tau$ be a P-node with poles $p,q$.
By Lemma~\ref{lem:sp-tree-properties}(\ref{sp:branches}) and~(\ref{sp:p-degree}), the connected components of $\cT-\tau$ give a decomposition satisfying the hypotheses of Lemma~\ref{lem:2}.
If $\{p,q\}$ did not collapse in $X$, that lemma would give a nonzero $X$-stress with strictly smaller vertex support, contradicting the vertex-minimality of $Z$.
\end{proof}

Using the collapse condition obtained in Corollary~\ref{cor:5}
and the properties of SP-trees in
Subsubsection~\ref{subsec:sp-tree}, we prove the following lemma,
which is the main result of this subsection.

\begin{lemma}
\label{prop:9}
Let $G$ be a $K_4$-minor-free clique-sum atom, and suppose that $G$ is neither $K_1$, $K_2$, $K_3$, nor a cycle. Let $a\in A_G\setminus\cKD(G)$, $X$ be a completion of $a$, and $Z$ be a vertex-minimal full-support $X$-stress. Then there exists a chordless cycle $C\subset G$ such that $p_{G,C}(a)$ has a completion of rank at most $2$.
\end{lemma}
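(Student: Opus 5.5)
We first record the structure of $G$. A clique-sum atom with at least two vertices other than $K_2$ is biconnected, since a cut vertex is a clique separator of size one. It has no clique separator of size two. Since $G\ne K_3$, Lemma~\ref{lem:sp-tree-properties}(\ref{sp:triangle-free}) shows that $G$ is triangle-free, so every maximal clique is an edge. The hypothesis $a\notin\cKD(G)$ then says $\det a[e]\ne0$ for every edge $e$. In particular, the endpoints of an edge never collapse in $X$, because $X[e]$ is invertible. Fix an SP-tree $\cT$. Because $G$ is not a cycle, Lemma~\ref{lem:sp-tree-properties}(\ref{sp:two-leaves}) gives a P-node $\tau$ with poles $p,q$ adjacent to two leaf S-nodes $\mu_1,\mu_2$. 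By (\ref{sp:leaf-path}), $\operatorname{skel}(\mu_\ell)$ is a real $p$--$q$ path $P_\ell$ of length at least two whose internal vertices have degree two in $G$. Then $C:=P_1\cup P_2$ is a cycle in $G$. It is chordless because internal vertices have degree two and $pq\notin E(G)$. Its length is at least four.

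Next we use the stress. Write $X=U^TU$ as a minimal-rank Gram representation, so $UZ=O$ by Lemma~\ref{lem:1}. Corollary~\ref{cor:5} says that $\{p,q\}$ collapses, i.e.\ $u_p\parallel u_q$. Since $u_p\ne0$ (as $\det X[e]\neq 0$ for an edge at $p$), we may write $u_q=\kappa u_p$. Let $P_1=(p=w_0,w_1,\dots,w_k=q)$. Each internal vertex $w_j$ has degree two, so its equilibrium equation reads $Z_{w_jw_{j-1}}u_{w_{j-1}}+Z_{w_jw_j}u_{w_j}+Z_{w_jw_{j+1}}u_{w_{j+1}}=0$.

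\textbf{Key claim:} because $Z$ has full support and is vertex-minimal, the off-diagonal entries $Z_{w_jw_{j\pm1}}$ are nonzero. If some such entry were zero, the equations would propagate along the path and force the path vectors to be degenerate. To prove this, first use full support. Then use the fact that consecutive vectors are linearly independent, since $\det X[e]=\det(U_e^TU_e)\ne0$ forces $u_{w_j},u_{w_{j+1}}$ to be independent. An argument like that of Proposition~\ref{prop:cycle-rank2} then shows that the vectors $u_{w_0},\dots,u_{w_k}$ all lie in the two-dimensional space $W:=\Span\{u_{w_0},u_{w_1}\}$. The reason is that each three-term relation with nonzero outer coefficient expresses $u_{w_{j+1}}$ in terms of $u_{w_{j-1}}$ and $u_{w_j}$. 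Then $u_q=\kappa u_p\in W$ as well. The same argument for $P_2$ puts all its vectors in $\Span\{u_p,u_{w'_1}\}$, where $w'_1$ is the neighbor of $p$ on $P_2$. The hard case is when these two planes differ.

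\textbf{Two planes.} To handle this, we would avoid forcing a common plane. Instead we build a rank-two completion of $p_{G,C}(a)$ directly by orthogonal projection. Note that $u_p$ is nonisotropic when $a_{pp}\neq0$, but this need not hold in general. More robustly, observe that each path separately has a Gram configuration in a plane. Suppose we can find linear maps $\phi_\ell:W_\ell\to\C^2$ that preserve the bilinear form on the relevant vectors and agree on $u_p$ and $u_q=\kappa u_p$. Gluing the images then gives $U'$ with $p_C(U'^TU')=p_{G,C}(a)$ and rank at most $2$. Each plane $W_\ell$ with the restricted form either embeds isometrically into $(\C^2,x^Ty)$, if the restriction is nondegenerate, or its Gram data is still realizable in $\C^2$. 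Isometries of $\C^2$ fixing a given nonzero vector exist by Witt's theorem when the vector is nonisotropic. When $u_p$ is isotropic, we would treat it by a separate Witt-type extension or by a limiting/genericity argument.

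We expect the main obstacles to be the key claim and this gluing step. The key claim needs care when an internal $Z$-entry vanishes. In that case, vertex-minimality combined with the decomposition trick of Lemma~\ref{lem:2} should yield a contradiction. The gluing step needs care in degenerate configurations.
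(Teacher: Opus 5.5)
\textbf{Verdict: right route, but the gluing step is not proved.} Your route is the paper's: the chordless cycle $C=P_1\cup P_2$ from two leaf S-nodes at a common P-node, a minimal-rank Gram representation, the collapse $u_q=\kappa u_p$ from Corollary~\ref{cor:5}, confinement of each path to a plane, and gluing the two planes by a form-preserving map fixing $u_p$. The paper maps $W_2$ into $W_1$ rather than both into $\C^2$, which is only cosmetic. The two steps you flag as obstacles are left open, though, and the second is where the content of the lemma lies.

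\textbf{The key claim is immediate.} It needs neither vertex-minimality nor Lemma~\ref{lem:2}. An internal vertex $w_j$ has degree two, so full support means the row $(Z_{w_jw_{j-1}},Z_{w_jw_j},Z_{w_jw_{j+1}})$ is nonzero. Its equilibrium equation is therefore a nontrivial dependence among $u_{w_{j-1}},u_{w_j},u_{w_{j+1}}$. Since $u_{w_{j-1}},u_{w_j}$ are independent, the coefficient of $u_{w_{j+1}}$ cannot vanish. Hence $\Span\{u_{w_j},u_{w_{j+1}}\}=\Span\{u_{w_{j-1}},u_{w_j}\}$, and the plane propagates along the path.

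\textbf{The gap is the gluing.} You write ``suppose we can find'' the maps $\phi_\ell$, allow for a degenerate plane, and leave the isotropic case $a_{pp}=0$ to an unspecified Witt-type or limiting argument. The missing observation is that $W_\ell$ is always nondegenerate. Its Gram matrix in the basis $u_p,u_{w^{(\ell)}_1}$ is the edge block $a[\{p,w^{(\ell)}_1\}]$, which is invertible because every edge is a maximal clique and $a\notin\cKD(G)$. So $W_1\cong W_2$ as quadratic spaces.

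Witt's extension theorem then extends any isometry between arbitrary subspaces, isotropic ones included. In particular the identity on $\C u_p$ extends to an isometry $\Psi:W_2\to W_1$. This case split genuinely matters, since $a_{pp}=0$ is allowed: singletons are not maximal cliques. Concretely:
\begin{itemize}
\item If $a_{pp}\neq0$, complete $u_p$ in each plane by an orthogonal vector. It has nonzero square by nondegeneracy, so you can rescale it to square $1$.
\item If $a_{pp}=0$, complete $u_p$ to a hyperbolic pair in each plane.
\end{itemize}
In either case, send $u_p\mapsto u_p$ and the second basis vector of $W_2$ to that of $W_1$.

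Then $\Psi(u_q)=\kappa u_p=u_q$. Setting $v_i=u_i$ on $P_1$ and $v_i=\Psi(u_i)$ on $P_2$ gives vectors in $W_1$ whose Gram matrix is the required completion of rank at most $2$. A limiting or genericity argument would not substitute here. It would at best place $p_{G,C}(a)$ in a closure, whereas the lemma asserts an actual completion of this particular $a$.
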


\begin{proof}
Let $\cT$ be an SP-tree of $G$. By Lemma~\ref{lem:sp-tree-properties}(\ref{sp:two-leaves}), there is a P-node $\tau$ adjacent to two leaf S-nodes $\mu_1,\mu_2$. Let $p,q$ be the poles of $\tau$.
For $\ell=1,2$, deleting the unique virtual edge $pq$ from $\operatorname{skel}(\mu_\ell)$ gives a path
\begin{equation}\label{eq:leaf-path}
P_\ell:\ p=r_0^{(\ell)},r_1^{(\ell)},\ldots,r_{m_\ell}^{(\ell)}=q
\end{equation}
of length $m_\ell\ge2$ in $G$, by Lemma~\ref{lem:sp-tree-properties}(\ref{sp:leaf-path}).

\begin{claim}
\label{claim:leaf-cycle-induced}
$C:=P_1\cup P_2$ is a chordless cycle of $G$.
\end{claim}
\begin{proof}
Since $\mu_1,\mu_2$ are leaves in $\cT$, each forms a connected component of $\cT-\tau$. Thus $P_1,P_2$ are the corresponding subgraphs $H_i$ in Lemma~\ref{lem:sp-tree-properties}(\ref{sp:branches}), so they meet only at $p,q$. Their union is therefore a simple cycle.
By Lemma~\ref{lem:sp-tree-properties}(\ref{sp:leaf-path}), every internal vertex of either path has degree two in $G$. 
Moreover, $\{p,q\}$ is a $2$-separator by Lemma~\ref{lem:sp-tree-properties}(\ref{sp:branches}), 
and hence $pq\notin E(G)$ because $G$ is a clique-sum atom. 
Consequently, $C$ has no chord.
\end{proof}

We show that $C$ has the required property. If $\rank X[V(C)]\le2$, then $X[V(C)]$ itself is the desired completion. Hence assume that $\rank X[V(C)]>2$. We will construct a new completion of rank two that preserves the diagonal entries and the entries on the edges of $C$.
Choose a minimal-rank Gram representation $X=U^TU$, with $U=(u_i)_{i\in V(G)}$. By Lemma~\ref{lem:1}, we have $UZ=O$.
By Lemma~\ref{lem:sp-tree-properties}(\ref{sp:triangle-free}), $G$ is triangle-free, so every edge is a maximal clique. Thus $a\notin\cKD(G)$ implies $\det X[e]\ne0$ for every edge $e$ of $G$, and consequently
\begin{equation}
\dim\operatorname{Span}\{u_i,u_j\}=2
\quad\text{for every }\{i,j\}\in E(G).
\label{eq:leaf-cycle-edge-independence}
\end{equation}
Recall the notation $P_\ell$ from~\eqref{eq:leaf-path}.
For $\ell=1,2$, let
$L_\ell:=\operatorname{Span}\{u_p,u_{r_1^{(\ell)}}\}$.
$L_{\ell}$ is two-dimensional by~\eqref{eq:leaf-cycle-edge-independence}.

\begin{claim}
\label{claim:leaf-path-span}
For $\ell=1,2$, we have $u_{r_k^{(\ell)}}\in L_\ell$ for every $0\le k\le m_\ell$.
\end{claim}
\begin{proof}
Fix $\ell\in\{1,2\}$ and write $r_k=r_k^{(\ell)}$.
At each internal vertex $r_k$, where $1\le k\le m_\ell-1$, the only incident edges in $G$ are the two path edges. Hence the equilibrium equation is
\[
Z_{r_kr_{k-1}}u_{r_{k-1}}+Z_{r_kr_k}u_{r_k}+Z_{r_kr_{k+1}}u_{r_{k+1}}=0.
\]
Since $Z$ has full support, these coefficients are not all zero. Thus the three vectors are linearly dependent. Consecutive vectors are linearly independent by~\eqref{eq:leaf-cycle-edge-independence}, so
$
\operatorname{Span}\{u_{r_{k-1}},u_{r_k}\}
=\operatorname{Span}\{u_{r_k},u_{r_{k+1}}\}
$.
It follows successively that all vectors on $P_\ell$ lie in $L_\ell$.
\end{proof}

\begin{claim}
\label{claim:leaf-path-isometry}
There exists a linear map $\Psi:L_2\to L_1$ such that
\[
\Psi(x)^T\Psi(y)=x^Ty\quad(x,y\in L_2),
\qquad \Psi(u_p)=u_p.
\]
\end{claim}
\begin{proof}
Recall that a bilinear form $f$ on a vector space $W$ is \emph{nondegenerate} if $f(x,y)=0$ for every $y\in W$ implies $x=0$.
For $\ell=1,2$, the matrix representing the bilinear form $x^Ty$ 
on $L_\ell$ in the basis $u_p,u_{r_1^{(\ell)}}$ is 
\[
X[\{p,r_1^{(\ell)}\}]=
\begin{pmatrix}
u_p^Tu_p & u_p^Tu_{r_1^{(\ell)}}\\
u_{r_1^{(\ell)}}^Tu_p & u_{r_1^{(\ell)}}^Tu_{r_1^{(\ell)}}
\end{pmatrix},
\]
whose determinant is nonzero. 
So the bilinear form on $L_\ell$  is nondegenerate.
Since $L_1,L_2$ are nondegenerate spaces of the same dimension over $\C$, they are isometric, 
and Witt's extension theorem~\cite[Chapter~I]{Lam} extends the identity on $\C u_p$ 
to an isometry $\Psi:L_2\to L_1$.
\end{proof}

Let $\Psi$ be the linear map shown in Claim~\ref{claim:leaf-path-isometry}. 
By Corollary~\ref{cor:5}, $\{p,q\}$ collapses in $X$, so $u_p,u_q$ are linearly dependent. 
Since $u_p\neq 0$ by \eqref{eq:leaf-cycle-edge-independence}, we have $u_q\in \C u_p$.
Hence, the linearity of $\Psi$ and $\Psi(u_p)=u_p$ imply $\Psi(u_q)=u_q$.
For each $i\in V(C)$, define
\[
v_i=
\begin{cases}
u_i,&i\in V(P_1),\\[2mm]
\Psi(u_i),&i\in V(P_2).
\end{cases}
\]
The two definitions agree at $p,q$. By Claim~\ref{claim:leaf-path-span}, all $v_i$ lie in $L_1$.
Since $\Psi$ preserves the bilinear form, we have
$
v_i^Tv_i=a_{ii}\ (i\in V(C))$ and $v_i^Tv_j=a_{ij}\ (\{i,j\}\in E(C))$.
Therefore $Y_C:=(v_i^Tv_j)_{i,j\in V(C)}$ is a completion of $p_{G,C}(a)$ with rank at most two. Its edge determinants agree with those of $X$ and are nonzero, so $\rank Y_C=2$.
\end{proof}

\subsection{Proof of the main theorem}
\label{sec:K4}

We now prove the main theorem of this paper, Theorem~\ref{thm:main}.
We first establish the clique--cycle decomposition for
$K_4$-minor-free graphs, and then extend it to cycle-completable
graphs using the clique-sum formula.
We begin with the following consequence of the clique--cycle
decomposition, which allows us to identify the essential
discriminants of the summands in the inductive proof.

\begin{prop}
\label{prop:14}
Let $G$ be a simple graph and suppose that
the clique--cycle decomposition holds for $G$.
Then $
\cR(G)=\bigcup_{C\in\operatorname{IndCyc}(G)}p_{G,C}^{-1}\cR(C)$.
\end{prop}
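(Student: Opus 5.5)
We prove the two inclusions separately. Write $\cR':=\bigcup_{C\in\operatorname{IndCyc}(G)}p_{G,C}^{-1}\cR(C)$. The hypothesis says $\cD(G)=\cKD(G)\cup\cR'$, and we always have $\cD(G)=\cKD(G)\cup\cR(G)$.

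For $\cR(G)\subseteq\cR'$, the set $\Sigma(G)\setminus\cKD(G)$ lies in $\cD(G)\setminus\cKD(G)$. By the hypothesis, this is contained in $\cR'$. Since $\cR'$ is Zariski closed, being a finite union of preimages of closed sets under the coordinate projections $p_{G,C}$, taking closures gives $\cR(G)\subseteq\cR'$.

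For the reverse inclusion, fix $C\in\operatorname{IndCyc}(G)$. By Proposition~\ref{prop:cycle-essential-hypersurface}, $\cR(C)$ is an irreducible hypersurface of $A_C$. Since $p_{G,C}$ is a coordinate projection, $W:=p_{G,C}^{-1}\cR(C)$ is an irreducible hypersurface of $A_G$, namely $\cV_{A_G}(\rho_C)$ with $\rho_C\in R_C\subseteq R_G$ irreducible. The hypothesis gives $W\subseteq\cD(G)=\cKD(G)\cup\cR(G)$, so by irreducibility either $W\subseteq\cR(G)$, and we are done, or $W\subseteq\cV_{A_G}(\det X[L])$ for some maximal clique $L$ of $G$. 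The main step is to exclude the second case.

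Suppose $W\subseteq\cV(\det X[L])$. Then $\rho_C$ divides some power of $\det X[L]$, and both are irreducible. Indeed, $\det X[L]$ is an irreducible polynomial in the symmetric indeterminates of $L$; for $|L|=1$ it is $x_{ii}$. Hence $\rho_C\doteq\det X[L]$, and the two polynomials involve the same variables. Since $C$ is chordless of length at least four, $L$ meets $V(C)$ in at most an edge. On the other hand, $\rho_C$ depends on every diagonal variable $x_{kk}$ with $k\in V(C)$, since by Theorem~\ref{thm:cycle-resultant-recursion} its leading coefficient in an edge variable is a nonconstant monomial in those $x_{kk}$. Thus $\rho_C$ involves at least four diagonal variables, while $\det X[L]$ involves at most two of the indices in $V(C)$, a contradiction. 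Alternatively, one can compare degrees: by Theorem~\ref{thm:cycle-resultant-recursion}, $\deg_{x_{i,i+1}}\rho_C=2^{|V(C)|-2}\ge4$, whereas $\det X[L]$ has degree at most two in any single variable. Hence $W\subseteq\cR(G)$ for every $C$, which gives $\cR'\subseteq\cR(G)$ and completes the proof.
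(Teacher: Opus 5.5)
Your proof is correct, and the first inclusion is argued exactly as in the paper. The two proofs differ in how they rule out $p_{G,C}^{-1}\cR(C)\subseteq\cV_{A_G}(\det X[L])$.

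\emph{The paper's route.} It constructs witnesses directly. For each maximal clique $L$ it writes down a point of $p_{G,C}^{-1}\cR(C)$ with $\det a[L]\neq0$, treating the cases $|L\cap V(C)|=0,1,2$ separately. This uses only two facts: the coordinates outside $C$ are free, and $\cR(C)$ contains points whose edge determinants are nonzero. Irreducibility then makes $p_{G,C}^{-1}\cR(C)\setminus\cKD(G)$ dense.

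\emph{Your route.} You argue algebraically. By the Nullstellensatz, together with the irreducibility of $\rho_C$ and of the generic symmetric determinant, the inclusion would force $\rho_C\doteq\det X[L]$. You then rule this out with Theorem~\ref{thm:cycle-resultant-recursion}: $\rho_C$ involves every diagonal variable $x_{kk}$ with $k\in V(C)$ (take the leading coefficient in an edge variable not incident to $k$), or alternatively it has degree $2^{|V(C)|-2}\ge4$ in an edge variable. But $L$ meets $V(C)$ in at most an edge, and $\det X[L]$ has degree at most two in any variable.

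\emph{What each buys.} Your argument is shorter. It also gives, as a by-product, that $\operatorname{ItrRes}(C)$ is not associate to any clique determinant. That is exactly the non-associativity the paper asserts only briefly at the end of the proof of Theorem~\ref{thm:15}. The cost is heavier prerequisites: irreducibility of generic symmetric determinants, and the full degree and leading-coefficient formula, whose proof relies on Lemma~\ref{lem:cycle-edge-degree-lower-bound}. The paper's witness construction needs nothing about the defining polynomial beyond the irreducibility of $\cR(C)$.
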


\begin{proof}
The assumption gives
$
\cD(G)\setminus\cKD(G)
\subset
\bigcup_{C\in\operatorname{IndCyc}(G)}p_{G,C}^{-1}\cR(C)$.
Since the right-hand side is closed, taking Zariski closures yields
$\cR(G)\subset
\bigcup_{C\in\operatorname{IndCyc}(G)}p_{G,C}^{-1}\cR(C)$.

For the reverse inclusion, fix $C\in\operatorname{IndCyc}(G)$.
By Proposition~\ref{prop:cycle-essential-hypersurface}, we see that $\cR(C)$ is irreducible, and hence so is $p_{G,C}^{-1}\cR(C)$. 
Moreover, $p_{G,C}^{-1}\cR(C)\subset\cD(G)$ by assumption. 
We claim that $p_{G,C}^{-1}\cR(C)\not\subset\cKD(G)$. 
Note that, for $a\in p_{G,C}^{-1}\cR(C)$, the diagonal and edge coordinates indexed by $C$ are constrained by the defining equation of $\cR(C)$, whereas all the other coordinates are free.
Let $L$ be a maximal clique of $G$. 
Since $C$ is chordless and has length at least $4$, the set $L\cap V(C)$ has at most two vertices, and two such vertices form an edge of $C$. 
If $L\cap V(C)=\varnothing$, choose $a[L]$ to be nonsingular.
If $L\cap V(C)=\{i\}$, 
choose \(j\in L\setminus V(C)\) and the free entries so that the \(\{i,j\}\)-principal block is \(\left(\begin{smallmatrix}a_{ii}&1\\1&0\end{smallmatrix}\right)\) and the remaining block is the identity.
Then $\det a[L]\neq 0$.
If $L\cap V(C)=\{i,j\}$, then $\{i,j\}\in E(C)$, and the definition of $\cR(C)$ allows us to choose $a$ with $\det a[\{i,j\}]\neq0$; choosing the remaining coordinates of $a[L]$ as an identity block again gives $\det a[L]\neq0$. Thus $p_{G,C}^{-1}\cR(C)$ is contained in none of the hypersurfaces defining $\cKD(G)$. Since $p_{G,C}^{-1}\cR(C)$ is irreducible, $p_{G,C}^{-1}\cR(C)\setminus\cKD(G)$ is Zariski dense in $p_{G,C}^{-1}\cR(C)$. Therefore
$p_{G,C}^{-1}\cR(C)\subset
\overline{\cD(G)\setminus\cKD(G)}^{\mathrm{Zar}}
=\cR(G)$.
Taking the union over all chordless cycles $C$ proves the reverse inclusion.
\end{proof}

We now turn to the clique--cycle decomposition for
$K_4$-minor-free graphs.
We first solve the case when $G$ is a $K_4$-minor-free
clique-sum atom, using the tools developed in the preceding
subsection.

\begin{prop}
\label{prop:atom-step}
Let $G$ be a $K_4$-minor-free clique-sum atom, and suppose that $G$ is neither $K_1$, $K_2$, $K_3$, nor a cycle. Assume that the clique--cycle decomposition holds for every proper induced subgraph of $G$. Then the same decomposition holds for $G$.
\end{prop}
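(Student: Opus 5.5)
Since $\cKD(G)\subset\cD(G)$ by Proposition~\ref{prop:max-clique-component} and $p_{G,C}^{-1}\cR(C)\subset\cD(G)$ for every $C\in\operatorname{IndCyc}(G)$ by Theorem~\ref{thm:L}, one inclusion of the clique--cycle decomposition holds without further work. For the reverse inclusion it suffices, since the right-hand side is closed, to take $a\in\Sigma(G)\setminus\cKD(G)$ and show $a\in p_{G,C}^{-1}\cR(C)$ for some induced cycle $C$ of length at least four. Let $X$ be a completion of $a$ and choose a nonzero $X$-stress $Z$ in $\S_G$ that is vertex-minimal; such a $Z$ exists by finiteness of vertex supports. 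Set $W:=V(\supp Z)$. The argument then splits according to whether $W=V(G)$.

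If $W\subsetneq V(G)$, then $Z$ is the zero extension of a nonzero matrix $Z_W\in\S_{G[W]}$ with $X[W]Z_W=O$. Thus $p_{G,G[W]}(a)\in\Sigma(G[W])$. By hypothesis, the clique--cycle decomposition holds for the proper induced subgraph $G[W]$, so either $p_{G,G[W]}(a)\in\cKD(G[W])$ or $p_{G,G[W]}(a)\in p_{G[W],C}^{-1}\cR(C)$ for an induced cycle $C$ of $G[W]$. An induced cycle of $G[W]$ is also an induced cycle of $G$, so the second alternative gives the claim directly.

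The first alternative is the expected obstacle: a maximal clique of $G[W]$ need not be maximal in $G$. I would handle it through Lemma~\ref{lem:sp-tree-properties}(\ref{sp:triangle-free}). Since $G$ is a triangle-free clique-sum atom other than $K_3$, the same holds for $G[W]$, so every maximal clique of $G[W]$ is an edge or a single vertex. An edge of $G[W]$ is an edge of $G$, hence a maximal clique of $G$, and $a\notin\cKD(G)$ gives $\det a[e]\ne0$. A maximal clique $\{w\}$ of $G[W]$ is an isolated vertex of $G[W]$. Because $Z$ is supported on $W$, the equilibrium equation at $w$ reads $Z_{ww}u_w=0$ with $Z_{ww}\neq0$. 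This forces $u_w=0$ and hence $a_{ww}=0$, so $\det a[e]=0$ for any edge $e$ of $G$ at $w$. Such an edge exists since $G$ is a connected atom with at least two vertices, and this contradicts $a\notin\cKD(G)$. Hence the first alternative cannot occur.

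If $W=V(G)$, then $Z$ is a vertex-minimal full-support $X$-stress. Lemma~\ref{prop:9} then gives a chordless cycle $C\subset G$ such that $p_{G,C}(a)$ has a completion $Y_C$ of rank at most two. From its proof, $\rank Y_C=2$ and $C$ has length at least four. Since $G$ is triangle-free, every edge of $C$ is a maximal clique of $G$, so $p_{G,C}(a)\notin\cKD(C)$. Proposition~\ref{prop:cycle-rank2} then shows that $Y_C$ admits a nonzero stress, so $p_{G,C}(a)\in\Sigma(C)\setminus\cKD(C)\subset\cR(C)$. This completes the reverse inclusion.

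Finally, the defining polynomial follows by taking the product of the irreducible polynomials $\det X[L]$ and $\operatorname{ItrRes}(C)$, using Theorem~\ref{thm:cycle-iterated-resultant}. These factors are pairwise distinct up to scalars, since they involve different variable sets, so the product is square-free.
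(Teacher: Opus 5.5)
Your proposal is correct and follows the paper's proof essentially step for step. It uses the same case split on the vertex support of a vertex-minimal stress, the same triangle-freeness and zero-column argument to rule out $\cKD(G[W])$, and the same combination of Lemma~\ref{prop:9} with Proposition~\ref{prop:cycle-rank2} in the full-support case.

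Two small wording fixes. First, $G[W]$ inherits only triangle-freeness, not the clique-sum atom property; triangle-freeness is all you actually use. Second, at an isolated $w$ the contradiction needs $u_w=0$ to kill both $a_{ww}$ and $a_{wj}$, since $a_{ww}=0$ alone does not give $\det a[e]=0$.
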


\begin{proof}
By Lemma~\ref{lem:sp-tree-properties}(\ref{sp:triangle-free}), $G$ has no triangle. Proposition~\ref{prop:max-clique-component} and Theorem~\ref{thm:L} give 
$\cKD(G)\cup\bigcup_{C\in\operatorname{IndCyc}(G)}p_{G,C}^{-1}\cR(C)\subset\cD(G)$,
so it remains to prove the reverse inclusion. 
Fix $a\in\Sigma(G)\setminus\cKD(G)$, and choose a completion $X$ with $p_G(X)=a$ and a vertex-minimal nonzero $X$-stress $Z$. 
Let $S:=V(\operatorname{supp}Z)$.

First, suppose that $S\subsetneq V(G)$. Since $X[S]Z[S]=0$ and $Z[S]\neq O$, we have $p_{G[S]}(X[S])\in\cD(G[S])$. 
Here, we claim that $p_{G[S]}(X[S])\notin\cKD(G[S])$. Otherwise, there is a maximal clique $L$ of $G[S]$ with $\det X[L]=0$. Since $G$ has no triangle, $|L|\le2$. If $|L|=2$, then $L$ is also a maximal clique of $G$, and hence $a\in\cKD(G)$. 
This contradicts $a\notin\cKD(G)$. 
If $L=\{i\}$, then $i$ is isolated in $G[S]$. Since $i\in S$, the definition of $S$ forces $Z_{ii}\neq0$. 
Since $Z_{ii}\neq0$, the $i$th column of $XZ=O$ implies that the $i$th column of $X$ is zero. 
Since $G$ is a clique-sum atom, $i$ has a neighbor $j$ in $G$, and hence $\det X[\{i,j\}]=0$. 
Since $G$ has no triangle, the edge $\{i,j\}$ is a maximal clique of $G$, contradicting $a\notin\cKD(G)$. Thus $p_{G[S]}(X[S])\notin\cKD(G[S])$. 
By the assumption, there is a cycle $C\in\operatorname{IndCyc}(G[S])$ such that $p_{G,C}(a)\in\cR(C)$. Since $G[S]$ is an induced subgraph of $G$, we also have $C\in\operatorname{IndCyc}(G)$. 

Next, suppose that $S=V(G)$. Then $Z$ is a vertex-minimal full-support $X$-stress. By Lemma~\ref{prop:9}, there exists $C\in\operatorname{IndCyc}(G)$ such that $p_{G,C}(a)$ has a completion $Y_C$ of rank at most $2$. Since $a\notin\cKD(G)$ and $G$ has no triangle, every edge $e$ of $C$ satisfies $\det Y_C[e]=\det X[e]\neq0$. Hence $\rank Y_C=2$. Proposition~\ref{prop:cycle-rank2} gives $p_{G,C}(a)\in\Sigma(C)$. Since no cycle-edge determinant vanishes, $p_{G,C}(a)\notin\cKD(C)$. Thus $p_{G,C}(a)\in\cR(C)$ by the definition of $\cR(C)$.

In either case, we have
$
\Sigma(G)\setminus\cKD(G)
\subseteq\bigcup_{C\in\operatorname{IndCyc}(G)}p_{G,C}^{-1}\bigl(\cR(C)\bigr)$.
The right-hand side is Zariski closed, so taking Zariski closures gives
$
\cR(G)\subseteq\bigcup_{C\in\operatorname{IndCyc}(G)}p_{G,C}^{-1}\bigl(\cR(C)\bigr)$.
Since $\cD(G)=\cKD(G)\cup\cR(G)$, this proves the reverse inclusion.
\end{proof}

We now state the theorem for $K_4$-minor-free graphs.

\begin{thm}
\label{thm:15}
Let $G$ be a $K_4$-minor-free graph. Then
\[
\cD(G)=\cKD(G)\cup\bigcup_{C\in\operatorname{IndCyc}(G)}p_{G,C}^{-1}\cR(C).
\]
The defining polynomial of $\cD(G)$, up to a nonzero scalar, is
\[
\prod_{L\in\operatorname{MaxCliq}(G)}\det X[L]
\prod_{C\in\operatorname{IndCyc}(G)}\operatorname{ItrRes}(C).
\]
\end{thm}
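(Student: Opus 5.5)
We argue by strong induction on $|V(G)|$, proving the clique--cycle decomposition first and deducing the polynomial afterwards. Since induced subgraphs of $K_4$-minor-free graphs are $K_4$-minor-free, the induction hypothesis applies to every proper induced subgraph. The base cases $K_1,K_2,K_3$ are complete graphs, where $\cD=\cKD$ and $\operatorname{IndCyc}=\varnothing$. If $G$ is a chordless cycle $C_n$ with $n\ge4$, then $\operatorname{IndCyc}(G)=\{G\}$ and the decomposition is just $\cD(C_n)=\cKD(C_n)\cup\cR(C_n)$.

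If $G$ is not a clique-sum atom, write $G=G_1+_K G_2$ nontrivially. Each $G_\ell$ is then an induced subgraph on fewer vertices, since $K$ is a clique. Its decomposition holds by induction, and Proposition~\ref{prop:14} gives $\cR(G_\ell)=\bigcup_{C\in\operatorname{IndCyc}(G_\ell)}p_{G_\ell,C}^{-1}\cR(C)$. Theorem~\ref{thm:CS} together with $p_{G_\ell,C}\circ p_{G,G_\ell}=p_{G,C}$ then yields
\[
\cD(G)=\cKD(G)\cup\bigcup_{C\in\operatorname{IndCyc}(G_1)\cup\operatorname{IndCyc}(G_2)}p_{G,C}^{-1}\cR(C).
\]
It remains to check the combinatorial fact $\operatorname{IndCyc}(G)=\operatorname{IndCyc}(G_1)\cup\operatorname{IndCyc}(G_2)$. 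A chordless cycle of length at least $4$ cannot pass through the clique separator $K$ in a way that meets both $V(G_1)\setminus K$ and $V(G_2)\setminus K$. Indeed, it would have to cross $K$ at two vertices, and these are adjacent, which gives a chord. Conversely, induced cycles of $G_\ell$ are induced in $G$. If $G$ is an atom other than $K_1,K_2,K_3$ or a cycle, Proposition~\ref{prop:atom-step} gives the decomposition directly from the induction hypothesis.

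For the defining polynomial, we combine the decomposition with earlier results. Each $\cV(\det X[L])$ is an irreducible hypersurface, because $\det$ of a symmetric matrix of indeterminates is irreducible for $|L|\ge1$. By Theorem~\ref{thm:cycle-iterated-resultant}, each $p_{G,C}^{-1}\cR(C)=\cV_{A_G}(\operatorname{ItrRes}(C))$ with $\operatorname{ItrRes}(C)$ irreducible. Hence $\cD(G)$ is the zero set of the stated product, and it remains to show the product is square-free, that is, that the factors are pairwise non-associate.

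Distinct maximal cliques give determinants in different variable sets, so they are not associate. Distinct induced cycles $C\neq C'$ also have different variable sets: by Theorem~\ref{thm:cycle-resultant-recursion}, $\operatorname{ItrRes}(C)$ genuinely involves every edge variable of $C$ (its degree in each $x_{i,i+1}$ is $2^{|C|-2}$), and $E(C)\neq E(C')$. Finally, a cycle factor has positive degree in edge variables of a cycle of length at least $4$, and these variables cannot all lie in a single clique. Equivalently, Proposition~\ref{prop:14}'s argument shows $p_{G,C}^{-1}\cR(C)\not\subset\cKD(G)$. I expect this square-freeness bookkeeping, together with the cycle/clique-sum combinatorial fact, to be the only points needing care; the substantive work is already contained in Proposition~\ref{prop:atom-step} and Theorem~\ref{thm:CS}.
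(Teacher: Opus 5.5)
Your proposal is correct and follows the paper's proof essentially step by step. The common route is induction on $|V(G)|$ with the complete-graph and cycle base cases, the clique-sum step via Proposition~\ref{prop:14} and Theorem~\ref{thm:CS}, and the atom step via Proposition~\ref{prop:atom-step}, followed by irreducibility and pairwise non-associateness of the factors for the defining polynomial. Your arguments for $\operatorname{IndCyc}(G)=\operatorname{IndCyc}(G_1)\cup\operatorname{IndCyc}(G_2)$, and for non-associateness via the edge-degree formula of Theorem~\ref{thm:cycle-resultant-recursion}, only spell out steps that the paper states briefly.
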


\begin{proof}
First, we show the clique--cycle decomposition by induction on the number of vertices of $G$. If $G=K_1$, $K_2$, or $K_3$, then $\cD(G)=\cKD(G)=\cV_{A_G}(\det X[V(G)])$ obviously. If $G=C$ is a cycle of length at least $4$, Proposition~\ref{prop:max-clique-component} and the definition of the essential discriminant give $\cD(C)=\cKD(C)\cup\cR(C)$. 

Now assume that the result holds for every $K_4$-minor-free graph with fewer vertices than $G$. Suppose first that $G=G_1+_K G_2$ is a nontrivial clique-sum. Both $G_1$ and $G_2$ are $K_4$-minor-free and have fewer vertices than $G$. By the inductive hypothesis and Proposition~\ref{prop:14}, we have
$\cR(G_\ell)=\bigcup_{C\in\operatorname{IndCyc}(G_\ell)}p_{G_\ell,C}^{-1}\cR(C),
\ \ell=1,2$. 
By the clique-sum formula (Theorem~\ref{thm:CS}), we obtain
\[
\cD(G)=\cKD(G)
\cup\bigcup_{C\in\operatorname{IndCyc}(G_1)}p_{G,C}^{-1}\cR(C)
\cup\bigcup_{C\in\operatorname{IndCyc}(G_2)}p_{G,C}^{-1}\cR(C).
\]
Every chordless cycle of a clique-sum is contained in one of its summands, and hence $\operatorname{IndCyc}(G)=\operatorname{IndCyc}(G_1)\cup\operatorname{IndCyc}(G_2)$. This proves the clique--cycle decomposition for $G$ in the clique-sum case. If $G$ admits no nontrivial clique-sum decomposition, then $G$ is a clique-sum atom. Unless $G$ is one of the base cases above, Proposition~\ref{prop:atom-step} applies, since every proper induced subgraph of $G$ is $K_4$-minor free and has fewer vertices. This completes the induction.

Finally, each clique determinant $\det X[L]$ is irreducible, and
Corollary~\ref{cor:iterated-resultant-essential} shows that each
$\operatorname{ItrRes}(C)$ is irreducible and independent of the
triangulation and ear-removal order up to a nonzero scalar.
The factors indexed by different cliques or cycles depend on different
sets of variables and are therefore pairwise nonassociate.
Hence the clique--cycle decomposition implies that their product is
the defining polynomial of $\cD(G)$, up to a nonzero scalar.
\end{proof}


Finally, we prove Theorem~\ref{thm:main}, which extends
Theorem~\ref{thm:15} to cycle-completable graphs.

\begin{proof}[Proof of Theorem~\ref{thm:main}]
As defined in Subsection~\ref{subsec:def}, a cycle-completable
graph is obtained from complete graphs and $K_4$-minor-free
graphs by iterated clique-sums.
The clique--cycle decomposition holds for complete graphs,
since $\cD(H)=\cKD(H)$ and
$\operatorname{IndCyc}(H)=\varnothing$,
and for $K_4$-minor-free graphs by Theorem~\ref{thm:15}.

By Proposition~\ref{prop:14} and the clique-sum formula
(Theorem~\ref{thm:CS}), the decomposition is preserved under
clique-sums, since every chordless cycle of a clique-sum
is contained in one of its summands.
Thus the clique--cycle decomposition holds for $G$.
The defining polynomial follows as in the proof of
Theorem~\ref{thm:15}.
\end{proof}

\section*{Acknowledgments}
This work was supported by the Japan Science and Technology Agency (JST) as part of Adopting Sustainable Partnerships for Innovative Research Ecosystem (ASPIRE), Grant Number JPMJAP2520.
The first author was supported by JSPS KAKENHI Grant Number JP24K06841.

The authors used OpenAI ChatGPT (GPT-5.6 Thinking and GPT-6 Astra) to identify possible gaps in mathematical arguments, improve the English exposition, and suggest related literature. 
All suggestions and references were independently verified by the authors. The authors take full responsibility for the content of this paper.


\begin{thebibliography}{99}

\bibitem{AlizadehHaeberlyOverton}
F.~Alizadeh, J.-P.~Haeberly and M.~L.~Overton,
\textit{Complementarity and nondegeneracy in semidefinite programming},
Mathematical Programming \textbf{77} (1997), pp.111-128.

\bibitem{AlizadehHaeberlyOvertonIPM}
F.~Alizadeh, J.-P.~A.~Haeberly and M.~L.~Overton,
\textit{Primal-dual interior-point methods for semidefinite programming:
convergence rates, stability and numerical results},
SIAM J. Optim. \textbf{8} (1998), pp.~746--768.

\bibitem{BarrettJohnsonLoewy}
W.~W.~Barrett, C.~R.~Johnson and R.~Loewy,
\textit{The Real Positive Definite Completion Problem: Cycle Completability},
Memoirs of the American Mathematical Society \textbf{122}, no.~584,
American Mathematical Society, 1996.

\bibitem{BarrettJohnsonTarazaga}
W.~W.~Barrett, C.~R.~Johnson and P.~Tarazaga,
``The real positive definite completion problem for a simple cycle,''
\textit{Linear Algebra and its Applications} \textbf{192} (1993), 3--31.

\bibitem{BolkerRoth}
E.~D.~Bolker and B.~Roth,
``When is a bipartite graph a rigid framework?''
\textit{Pacific Journal of Mathematics} \textbf{90} (1980), no.~1, 27--44.


\bibitem{ChimaniHlineny}
M.~Chimani and P.~Hlin\v{e}n\'y,
``A tighter insertion-based approximation of the crossing number,''
\textit{Journal of Combinatorial Optimization} \textbf{33} (2017), no.~4, 1183--1225.


\bibitem{CLOIdeals}
D.~A.~Cox, J.~Little and D.~O'Shea,
\textit{Ideals, Varieties, and Algorithms:
An Introduction to Computational Algebraic Geometry and Commutative Algebra},
4th ed., Undergraduate Texts in Mathematics,
Springer, 2015.

\bibitem{CLOUsing}
D.~A.~Cox, J.~Little and D.~O'Shea,
\textit{Using Algebraic Geometry},
2nd ed., Graduate Texts in Mathematics 185,
Springer, 2005.

\bibitem{ChudnovskyMcInnis}
M.~Chudnovsky and I.~M.~J.~McInnis,
``Characterizing and generalizing cycle completable graphs,''
\textit{Discrete Mathematics} \textbf{347} (2024),
Article 113754.



\bibitem{FruehbisKruegerZach}
A.~Fr\"uhbis-Kr\"uger and M.~Zach,
``Determinantal singularities,''
in \textit{Handbook of Geometry and Topology of Singularities IV},
Springer, Cham, 2023, pp.~45--159.

\bibitem{GKZ}
I.~M.~Gelfand, M.~M.~Kapranov and A.~V.~Zelevinsky,
\textit{Discriminants, Resultants, and Multidimensional Determinants},
Birkh\"auser, Boston, 1994.

\bibitem{GroneJohnsonSaWolkowicz}
R.~Grone, C.~R.~Johnson, E.~M.~S\'a and H.~Wolkowicz,
``Positive definite completions of partial Hermitian matrices,''
\textit{Linear Algebra and its Applications} \textbf{58} (1984), 109--124.


\bibitem{vanDerHolstLovaszSchrijver}
H.~van der Holst, L.~Lov\'asz and A.~Schrijver,
``The Colin de Verdi\`ere graph parameter,''
in \textit{Graph Theory and Combinatorial Biology},
Bolyai Society Mathematical Studies \textbf{7},
J\'anos Bolyai Mathematical Society, Budapest, 1999, 29--85.

\bibitem{HornJohnson}
R.~A. Horn and C.~R. Johnson,
\textit{Matrix Analysis},
2nd ed.,
Cambridge University Press, Cambridge, 2013.

\bibitem{JacksonJordanTanigawa}
B.~Jackson, T.~Jord\'an and S.~Tanigawa,
``Combinatorial conditions for the unique completability of low-rank matrices,''
\textit{SIAM Journal on Discrete Mathematics} \textbf{28} (2014),
1797--1819.

\bibitem{JiangSturmfels}
Y.~Jiang and B.~Sturmfels,
\emph{Bad projections of the PSD cone},
Collect. Math. \textbf{72} (2021), 261--280.

\bibitem{JohnsonMcKee}
C.~R.~Johnson and T.~A.~McKee,
``Structural conditions for cycle completable graphs,''
\textit{Discrete Mathematics} \textbf{159} (1996), 155--160.



\bibitem{KriegeKurpiczMutzel}
N.~Kriege, F.~Kurpicz and P.~Mutzel,
``On maximum common subgraph problems in series--parallel graphs,''
\textit{European Journal of Combinatorics} \textbf{68} (2018), 79--95.

\bibitem{Lam}
T.~Y.~Lam,
\textit{Introduction to Quadratic Forms over Fields},
Graduate Studies in Mathematics 67,
American Mathematical Society, Providence, RI, 2005.

\bibitem{LaurentSeriesParallel}
M.~Laurent,
``The real positive semidefinite completion problem for series-parallel graphs,''
\textit{Linear Algebra and its Applications} \textbf{252} (1997), 347--366.



\bibitem{LaurentMascarinTelen}
M.~Laurent, F.~M.~Mascarin, and S.~Telen, ``The Algebraic Boundary of Graph Elliptopes,'' arXiv:2605.02484, 2026.

\bibitem{LaurentVarvitsiotisSAP}
M.~Laurent and A.~Varvitsiotis,
``Positive semidefinite matrix completion, universal rigidity and the Strong Arnold Property,''
\textit{Linear Algebra and its Applications} \textbf{452} (2014), 292--317.

\bibitem{MumfordRedBook}
D.~Mumford,
\textit{The Red Book of Varieties and Schemes},
Lecture Notes in Mathematics 1358,
Springer, Berlin, 1988.


\bibitem{QiChordalNondegeneracy} H.~Qi, Positive semidefinite matrix completions on chordal graphs and constraint nondegeneracy in semidefinite programming, \emph{Linear Algebra Appl.} \textbf{430} (2009), no.~4, 1151--1164.



\bibitem{SingerCucuringu}
A.~Singer and M.~Cucuringu,
``Uniqueness of low-rank matrix completion by rigidity theory,''
\textit{SIAM Journal on Matrix Analysis and Applications} \textbf{31} (2010),
no.~4, 1621--1641.

\bibitem{SUY}
L.~Solus, C.~Uhler and R.~Yoshida,
``Extremal positive semidefinite matrices whose sparsity pattern is given by graphs without $K_5$ minors,''
\textit{Linear Algebra and its Applications} \textbf{509} (2016), 247--275.

\bibitem{SturmfelsUhler}
B.~Sturmfels and C.~Uhler,
``Multivariate Gaussians, semidefinite matrix completion, and convex algebraic geometry,''
\textit{Annals of the Institute of Statistical Mathematics} \textbf{62} (2010), 603--638.






\end{thebibliography}
\end{document}